\def\phi{\varphi}
\def\epsilon{\varepsilon}
\def\tilde{\widetilde}
\def\bar{\overline}
\def\to{\longrightarrow}
\def\mapsto{\longmapsto}
\def\Spec{\operatorname{Spec}}
\def\Proj{\operatorname{Proj}}
\def\Supp{\operatorname{Supp}}
\def\ord{\operatorname{ord}}
\newcommand{\Vol}{\mathrm{Vol}(X,0)}
\newcommand{\AX}{\mathrm{A}_{\X/X}}
\newcommand{\AY}{\mathrm{A}_{\Y/Y}}
\newcommand{\env}{\mathrm{Env}}
\newcommand{\Q}{\mathbb{Q}} 
\newcommand{\C}{\mathbb{C}} 
\newcommand{\R}{\mathbb{R}}
\newcommand{\sO}{\mathcal{O}}
\newcommand{\fa}{\mathfrak{a}}
\newcommand{\m}{\mathfrak{m}}
\newcommand{\n}{\mathfrak{n}}
\newcommand{\X}{\mathfrak{X}}
\newcommand{\Y}{\mathfrak{Y}}
\newcommand{\ilim}[1][]{\mathop{\varinjlim}\limits_{#1}}
\newcommand{\plim}[1][]{\mathop{\varprojlim}\limits_{#1}}
\theoremstyle{plain}
\newtheorem{thm}{Theorem}[section] 
\newtheorem{cor}[thm]{Corollary}
\newtheorem{prop}[thm]{Proposition}
\newtheorem{conj}[thm]{Conjecture}
\newtheorem{lem}[thm]{Lemma}
\theoremstyle{definition} 
\newtheorem{defn}[thm]{Definition}
\newtheorem{conv}[thm]{Convention}
\newtheorem{propdef}[thm]{Proposition-Definition} 
\newtheorem{eg}[thm]{Example} 
\theoremstyle{remark}
\newtheorem{rem}[thm]{Remark}
\newtheorem*{cl}{Claim}
\newtheorem*{clproof}{Proof of Claim}
\newtheorem*{acknowledgement}{Acknowledgments}
\title{Singularities of non-$\Q$-Gorenstein varieties admitting a polarized endomorphism}
\author{Shou Yoshikawa}
\address{Graduate School of Mathematical Sciences, University of Tokyo, 3-8-1 Komaba, Meguro-ku, Tokyo 153-8914, Japan}
\email{yoshikaw@ms.u-tokyo.ac.jp}
\begin{document}
\tolerance = 9999

\maketitle
\markboth{SHOU YOSHIKAWA}{Singularities of non-$\Q$-Gorenstein varieties admitting a polarized endomorphism}

\begin{abstract}
In this paper, we discuss a generalization of log canonical singularities in the non-$\Q$-Gorenstein setting.
We prove that if a normal complex projective variety has a non-invertible polarized endomorphism,
then it has log canonical singularities in our sense.
As a corollary, we give an affirmative answer to a conjecture of \cite{BH}.
\end{abstract}

\section{Introduction}

Let $X$ be a normal complex projective variety admitting a non-invertible polarized endomorphism.
Broustet and H\"{o}ring showed in \cite{BH} that
if $X$ is $\Q$-Gorenstein, then $X$ has log canonical singularities,
 using the existence of the log canonical model of $X$.
They also conjectured that an analogous statement holds even if $X$ is not $\Q$-Gorenstein.

\begin{conj}[\textup{cf. \cite[Conjecture 1.5]{BH}}]
Let $X$ be a normal projective complex variety admitting a non-invertible polarized endomorphism 
$f : X \to X$.
Suppose that $X$ has the log canonical model $\mu : Y \to X$.
Then $\mu$ is an isomorphism in codimension one.
\end{conj}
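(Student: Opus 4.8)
The plan is to deduce the Conjecture from the stronger statement that $X$, admitting a non-invertible polarized endomorphism, is log canonical in the generalized non-$\Q$-Gorenstein sense developed below, and then to show that for such an $X$ the log canonical model cannot extract a divisor. For the second, reductive, half: write $\mu\colon Y\to X$ for the log canonical model, so that $\Gamma:=\Exc(\mu)_{\mathrm{red}}$ makes $(Y,\Gamma)$ log canonical, $K_Y+\Gamma$ is $\Q$-Cartier, and $K_Y+\Gamma$ is $\mu$-ample. As is built into the definition of the log canonical model, every prime component of $\Gamma$ is a divisor over $X$ whose discrepancy, computed in the generalized sense, is strictly less than $-1$; since generalized log canonicity of $X$ rules this out, $\Gamma=0$, i.e.\ $\mu$ is an isomorphism in codimension one. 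So the heart of the matter is the generalized log canonicity of $X$, which I would prove by lifting $f$ to the log canonical model and running a positivity argument in the spirit of Broustet--H\"oring; note that $q>1$, where $f^{*}H\equiv qH$ for an ample $H$, must enter, since automorphisms of non-log-canonical varieties exist.

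For the lifting, suppose toward a contradiction that $\Gamma\neq0$. Form the normalization $\overline{Z}$ of the fibre product $Y\times_{\mu,X,f}X$; the projection to the second factor is projective and birational, while the projection $p\colon\overline{Z}\to Y$ is finite of degree $\deg f=q^{\dim X}$. A prime divisor on a modification of $X$ is exceptional precisely when its image in $X$ has codimension at least two, and this is preserved by the finite surjection $f$; it follows that $p$ matches up the exceptional divisors over $X$ on the two sides, so $p^{*}\Gamma$ has reduced support the full exceptional divisor of $\overline{Z}\to X$, and feeding this into the ramification formula — where the extra ramification of $f$, being supported off the exceptional locus, does not interfere — shows that $\overline{Z}\to X$ again has the defining properties of the log canonical model of $X$. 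By uniqueness of the log canonical model, $\overline{Z}\cong Y$ over $X$, and the composition $g\colon Y\xrightarrow{\ \sim\ }\overline{Z}\xrightarrow{\ p\ }Y$ is a finite surjection with $\mu\circ g=f\circ\mu$. Running the same bookkeeping for $g$ itself gives $g^{-1}(\Gamma)_{\mathrm{red}}=\Gamma$ and the logarithmic ramification identity $K_Y+\Gamma=g^{*}(K_Y+\Gamma)+R_g^{\circ}$, with $R_g^{\circ}\ge0$ the part of the ramification of $g$ supported away from $\Gamma$ (the pullback of the ramification divisor of $f$).

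For the contradiction I would iterate. Since each $g^{m}$ is finite, $(g^{m})^{*}(K_Y+\Gamma)=K_Y+\Gamma-R_{g^{m}}^{\circ}$ is again $\mu$-ample, where $R_{g^{m}}^{\circ}=\sum_{i=0}^{m-1}(g^{i})^{*}R_g^{\circ}\ge0$; hence $R_{g^{m}}^{\circ}$ stays bounded against every $\mu$-contracted curve, uniformly in $m$. Meanwhile $g^{*}\mu^{*}H\equiv q\,\mu^{*}H$ with $q>1$, which upon intersecting the identity with $(\mu^{*}H)^{\dim X-1}$ yields $(q-1)\,(K_Y+\Gamma)\cdot(\mu^{*}H)^{\dim X-1}=-\,R_g^{\circ}\cdot(\mu^{*}H)^{\dim X-1}\le0$. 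These are the pieces one feeds, as in Broustet--H\"oring, into a dynamical argument: the eigenvalue $q>1$ on the nef class $\mu^{*}H$, together with the uniform boundedness of the $R_{g^{m}}^{\circ}$ and the $\mu$-ampleness of $K_Y+\Gamma$, should be incompatible with the existence of a nonzero $\mu$-contracted ray, forcing $\Gamma=0$. I expect the real obstacle to be twofold: first, making that last dynamical step robust — getting the eigenvalue $q>1$ to genuinely exclude a $\mu$-exceptional ray is the technical core and the place where the Broustet--H\"oring method must be reworked; second, the non-$\Q$-Gorenstein bookkeeping throughout, since without a $\Q$-Gorenstein or klt hypothesis on $X$ one must establish existence and uniqueness of the log canonical model, verify its compatibility with the finite base change by $f$, and confirm that the generalized notion of discrepancy is the invariant that sees an extracted divisor.
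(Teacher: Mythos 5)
Your plan is the Broustet--H\"oring route --- lift $f$ to the log canonical model $Y$ and play the $\mu$-ampleness of $K_Y+\Gamma$ against the eigenvalue $q>1$ --- and this is precisely the route the paper deliberately avoids (it states that its proof ``does not use the existence of the log canonical model''). The two places where you write ``shows that'' and ``should be incompatible'' are exactly where the approach breaks without a $\Q$-Gorenstein hypothesis. First, the identification $\overline{Z}\cong Y$: the logarithmic ramification formula gives $K_{\overline{Z}}+\Gamma_{\overline{Z}}=p^*(K_Y+\Gamma)+R^{\circ}$ with $R^{\circ}\ge 0$ the non-exceptional ramification, but from this neither $\Q$-Cartierness nor relative ampleness of $K_{\overline{Z}}+\Gamma_{\overline{Z}}$ follows ($p^*(K_Y+\Gamma)$ is ample over $X$, yet adding the effective, a priori non-$\Q$-Cartier divisor $R^{\circ}$ can destroy both), and what finite lc descent actually gives is that $(\overline{Z},\Gamma_{\overline{Z}}-R^{\circ})$ is lc, not $(\overline{Z},\Gamma_{\overline{Z}})$; the difference is exactly the ramification of $f$ along non-exceptional divisors, which you declare ``does not interfere'' without argument. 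So $\overline{Z}\to X$ is not known to be a log canonical model of $X$, and uniqueness cannot be invoked to produce $g$. Relatedly, your opening claim that every component of $\Gamma$ has generalized discrepancy $<-1$ ``by definition'' is not by definition: it is the content of the paper's Theorem 4.4, proved via $Y=\Proj\bigoplus_m\sO_X(mK_X)$ and \cite[Lemma 6.2]{KM}, and it presupposes the valuative-lc property you are trying to establish.

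Second, the dynamical step is missing, and the one computation you offer cannot supply it: intersecting with $(\mu^*H)^{\dim X-1}$ sees nothing of $\Gamma$, since $\mu^*H$ is numerically trivial on every $\mu$-contracted curve; by the projection formula that identity lives entirely on $X$ and gives no leverage of the eigenvalue $q$ against the exceptional ray, which is where the contradiction must be found. For contrast, the paper's proof never touches $Y$: it takes an irreducible component $Z$ of the non-valuative-lc locus, shows $Z$ is totally invariant, localizes at the generic point of $Z$ so that $f$ becomes a finite local endomorphism with $\tilde d=\deg(f)/[\,f_*\kappa(Z):\kappa(Z)\,]$, and uses intersection numbers of nef $b$-divisors (the isolated volume, which satisfies $\tilde d\cdot\mathrm{Vol}^+\le\mathrm{Vol}^+$) to force $\tilde d=1$, i.e.\ $\deg(f)=\deg(f|_Z)$; for a polarized endomorphism $\deg(f)=q^{\dim X}>q^{\dim Z}=\deg(f|_Z)$, a contradiction, so $X$ is valuative lc and Theorem 4.4 then yields smallness of $\mu$. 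To salvage your write-up you would either have to carry out the two missing steps (lifting to the lc model with full control of ramification, and a genuine exclusion of the exceptional ray), or switch to the local degree-comparison mechanism.
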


In this paper, we give an affirmative answer to Conjecture 1.1.
For this purpose, we consider a generalization of log canonical singularities for non-$\Q$-Gorenstein varieties.
Classical log discrepancies are defined in terms of the pullback of canonical divisors,
but the classical pullback of a divisor makes sense only when the divisor is $\Q$-Cartier\footnote{In the surface case, we can consider Mumford's numerical pullback.
In the  higher dimensional case, de Fernex and Hacon defined the pullback of non-$\Q$-Cartier divisors in \cite{dFH}.}.
Hence we use the notion of nef envelopes introduced by \cite{BdFF} instead of the classical pullbacks.

When $D$ is a Weil divisor on a normal complex variety $W$,
the nef envelope $\env_W(D)$ of $D$ is the compatible system $ \{ \env_W(D)_Y \}$ 
with respect to the push-out map, where $\env_W(D)_Y$ is a divisor on a birational model $Y$ of $W$.
$\env_W(D)$ satisfies the following properties.

\begin{itemize}
\item  $\ord_E (\env_W(D)) =\displaystyle{ \lim_{m \to \infty}} \frac{1}{m} \ord_E (\sO_W (mD)) $ for any prime divisor $E$ over $W$.
\item If $D$ is $\Q$-Cartier, then $\env_W(D)_Y = \pi^* D$ for any birational model $\pi : Y \to W$.
\item If $W$ is a surface, then the nef envelope coincides with the Mumford's numerical pullback.
\end{itemize}
We define the log discrepancies of $W$ using $\env_W(K_W)$,
and we say that $W$ has \emph{valuative log canonical singularities} if  the minimal log discrepancy is greater than or equal to 0.

Thanks to the following theorem, we can reduce Conjecture 1.1 to prove that $X$ has valuative log canonical singularities.

\begin{thm}[\textup{Theorem 4.4}]
The following are equivalent to each other. 
\begin{enumerate}
\item
$W$ has valuative log canonical singularities.

\item
For any birational model $W_\pi$ over $W$ and for any positive number $m$,
we have
\[
\pi_* \sO_{W_\pi} (m(K_{W_\pi} + E^\pi)) = \sO_W(mK_W)
\]
where $E^\pi$ is the sum of the exceptional prime divisors on $W_\pi$.

\end{enumerate}
Furthermore, if $W$ has the log canonical model, the following condition is also equivalent.

(3) The log canonical model of $W$ is an isomorphism in codimension one.

\end{thm}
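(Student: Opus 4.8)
The plan is to prove $(1)\Leftrightarrow(2)$ by a direct valuative computation, and then, under the extra hypothesis, to close the cycle $(1)\Rightarrow(2)\Rightarrow(3)\Rightarrow(1)$. Recall that, for a prime divisor $E$ over $W$ lying on a model $\pi\colon W_\pi\to W$, the valuative log discrepancy is $a_E(W)=1+\ord_E\!\big(K_{W_\pi}-\env_W(K_W)_{W_\pi}\big)$, and $W$ has valuative log canonical singularities iff $a_E(W)\ge 0$ for every $E$. If $E$ is non-exceptional this number is $1$: along such a divisor $K_{W_\pi}$ and $\env_W(K_W)_{W_\pi}$ have the same coefficient $\ord_E(K_W)$, because $K_W$ is Cartier in codimension one and the nef envelope is compatible with restriction to opens (first two bulleted properties). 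Hence (1) is equivalent to the assertion that $\ord_E\!\big(\env_W(K_W)_{W_\pi}\big)\le \ord_E(K_{W_\pi})+1$ for every model $\pi$ and every $\pi$-exceptional prime divisor $E$ on $W_\pi$.

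For $(1)\Leftrightarrow(2)$: fix $\pi$ and $m\ge 1$. Since $E^\pi\ge 0$, and since a local section $g$ of $\pi_*\sO_{W_\pi}(m(K_{W_\pi}+E^\pi))$ has $\ord_F(g)\ge -m\,\ord_F(K_W)$ for every prime divisor $F$ on $W$ (read off along the strict transforms, where $E^\pi$ contributes nothing), we always have the chain of inclusions
\[
\pi_*\sO_{W_\pi}(mK_{W_\pi})\ \subseteq\ \pi_*\sO_{W_\pi}(m(K_{W_\pi}+E^\pi))\ \subseteq\ \sO_W(mK_W).
\]
Thus (2) for $(\pi,m)$ asserts that the composite inclusion is an equality; unwinding the definition of $\pi_*$, this says precisely that every local section $g$ of $\sO_W(mK_W)$ near the centre of a $\pi$-exceptional prime divisor $E$ satisfies $\ord_E(g)\ge -m(\ord_E(K_{W_\pi})+1)$, i.e. $\ord_E(\sO_W(mK_W))\le m(\ord_E(K_{W_\pi})+1)$. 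Now $m\mapsto\ord_E(\sO_W(mK_W))$ is superadditive (a product of local sections of $\sO_W(mK_W)$ and $\sO_W(m'K_W)$ is a local section of $\sO_W((m+m')K_W)$), so $\tfrac1m\ord_E(\sO_W(mK_W))$ never exceeds its limit, which by the first bulleted property is $\ord_E(\env_W(K_W)_{W_\pi})$; therefore ``$\ord_E(\sO_W(mK_W))\le m(\ord_E(K_{W_\pi})+1)$ for all $m$'' is equivalent to $\ord_E(\env_W(K_W)_{W_\pi})\le\ord_E(K_{W_\pi})+1$. Letting $\pi$ and $E$ range over all models and their exceptional prime divisors recovers exactly the reformulation of (1) above, so $(1)\Leftrightarrow(2)$.

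For $(2)\Rightarrow(3)$: by definition the log canonical model $\mu\colon Y\to W$ is the log canonical model of the pair $(W_\pi,E^\pi)$ over $W$ for a log resolution $\pi$, so $Y=\Proj_W\bigoplus_{m\ge0}\pi_*\sO_{W_\pi}(m(K_{W_\pi}+E^\pi))$ (well defined by the hypothesized finite generation). Applying (2) to this $W_\pi$ gives $\pi_*\sO_{W_\pi}(m(K_{W_\pi}+E^\pi))=\sO_W(mK_W)$ for all $m$, so $Y=\Proj_W\bigoplus_{m\ge0}\sO_W(mK_W)$; over the open set $W^{\circ}\subseteq W$ where $K_W$ is Cartier—whose complement has codimension $\ge2$ since $W$ is normal—the graded algebra $\bigoplus_m\sO_W(mK_W)$ is Zariski-locally a polynomial ring $\sO_{W^{\circ}}[t]$, so $\mu$ is an isomorphism over $W^{\circ}$, i.e. in codimension one. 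For $(3)\Rightarrow(1)$: if $\mu$ is an isomorphism in codimension one it is small, so its exceptional boundary vanishes, $(Y,0)$ is log canonical, $K_Y$ is $\Q$-Cartier, and pushforward along a small birational morphism preserves reflexivity, whence $\mu_*\sO_Y(mK_Y)$ is reflexive and agrees with $\sO_W(mK_W)$ off a codimension-$\ge2$ set, so $\mu_*\sO_Y(mK_Y)=\sO_W(mK_W)$ for all $m$. For a prime divisor $E$ over $W$ (equivalently over $Y$) lying on $\sigma\colon W'\to Y$, $\Q$-Cartierness of $K_Y$ shows that a local section $g$ of $\sO_W(mK_W)=\mu_*\sO_Y(mK_Y)$ near the centre of $E$ satisfies $\ord_E(g)\ge -m\,\ord_E(\sigma^*K_Y)$, with equality when $mK_Y$ is Cartier there; hence $\ord_E(\env_W(K_W)_{W'})=\lim_m\tfrac1m\ord_E(\sO_W(mK_W))=\ord_E(\sigma^*K_Y)$, so $a_E(W)=1+\ord_E(K_{W'}-\sigma^*K_Y)$ is the classical log discrepancy of $(Y,0)$ at $E$, which is $\ge0$ because $Y$ is log canonical. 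This gives (1).

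The crux, and the only place the nef-envelope machinery of the earlier sections is really needed, is the dictionary in $(1)\Leftrightarrow(2)$: that $\pi_*\sO_{W_\pi}(m(K_{W_\pi}+E^\pi))$ encodes exactly the valuative conditions $\ord_E(g)\ge -m(\ord_E(K_{W_\pi})+1)$, and that these couple to $\env_W(K_W)$ via the asymptotic-order formula; the only delicate passage is from ``for all $m$'' to the limit, which is harmless precisely because superadditivity of $\ord_E(\sO_W(m\,\cdot\,))$ keeps $\tfrac1m\ord_E(\sO_W(mK_W))$ below $\ord_E(\env_W(K_W)_{W_\pi})$. The equivalence with (3) is comparatively structural: it uses only the identification of the log canonical model of $W$ with the relative log canonical model of $(W_\pi,E^\pi)$, the triviality of $\Proj_W$ of the reflexive canonical algebra over the Gorenstein-in-codimension-one locus, and the finite generation that is already assumed.
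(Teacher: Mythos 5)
Your proof is correct and follows essentially the same route as the paper: the equivalence $(1)\Leftrightarrow(2)$ by translating membership in $\pi_*\sO_{W_\pi}(m(K_{W_\pi}+E^\pi))$ into valuative inequalities along exceptional divisors and comparing with $\env_W(K_W)$ via its description as the supremum/limit of $\tfrac1m Z(\sO_W(mK_W))$ (the paper packages the $(2)\Rightarrow(1)$ direction as $\env_W(K_W)=\env_\pi(K_{W_\pi}+E^\pi)$ together with Remark 2.14, which is the same computation), and the equivalence with $(3)$ via $\Proj$ of the canonical algebra and the identification $\env_W(K_W)=\bar{K_Y}$. Your explicit use of superadditivity to pass from ``for all $m$'' to the limit is exactly the point the paper handles by saying $\env_W(K_W)=\sup_m\tfrac1m Z(\sO_W(mK_W))$, and your direct proof of smallness over the Gorenstein-in-codimension-one locus replaces the paper's citation of \cite[Lemma 6.2]{KM}.

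One small imprecision in $(3)\Rightarrow(1)$: the equality $\ord_E(\env_W(K_W)_{W'})=\ord_E(\sigma^*K_Y)$ does not follow from $mK_Y$ being Cartier near the centre of $E$; a local generator of $\sO_Y(mK_Y)$ at that centre need not extend to a section of $\mu_*\sO_Y(mK_Y)$ on a neighbourhood of its image in $W$. What you need is that $K_Y$ is ample (hence relatively globally generated for divisible $m$, or at least nef) over $W$, which is part of the definition of the log canonical model and is what the paper invokes at the corresponding step; with that in hand the identification $\env_W(K_W)=\bar{K_Y}$ follows, e.g.\ from the negativity lemma, and the rest of your argument goes through unchanged.
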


Next, we discuss a local version of the main theorem.

\begin{thm}[\textup{Theorem 4.11}]
Suppose that $ (R,\m,k) $ is a normal local ring essentially of finite type over $\C$
and $R$ has a finite injective local homomorphism $\phi : R \to R$.
If $\Spec R \backslash \{ \m \}$ has valuative log canonical singularities
and $\deg (\phi) > [\phi_*k : k]$,
then $\Spec R$ has valuative log canonical singularities.
\end{thm}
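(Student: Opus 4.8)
The plan is to reformulate ``$R$ has valuative log canonical singularities'' as the vanishing of a volume attached to $X=\Spec R$ at $0=\m$, and then to show that this volume is contracted by the self-map $f=\Spec\phi$; since $\deg(\phi)>[\phi_*k:k]$ the contraction factor is $<1$, forcing the volume to be $0$.

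\emph{Reduction to a volume.} By the local form of Theorem~4.4, $R$ has valuative log canonical singularities if and only if $\pi_*\sO_W(m(K_W+E^\pi))=\sO_X(mK_X)$ for every birational model $\pi\colon W\to X$ and every $m>0$. Fix a sufficiently fine $W$. Because $X\backslash\{0\}$ has valuative log canonical singularities, the cokernel $\mathcal C_m$ of $\pi_*\sO_W(m(K_W+E^\pi))\hookrightarrow\sO_X(mK_X)$ is supported at $0$, hence of finite length, and since only finitely many exceptional divisors occur on $W$ one gets an inclusion $\m^{cm}\sO_X(mK_X)\subseteq\pi_*\sO_W(m(K_W+E^\pi))$ with $c$ independent of $m$, so $\ell_R(\mathcal C_m)=O(m^{\dim R})$. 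I set $\mathrm{Vol}(X,0):=\limsup_m\frac{(\dim R)!}{m^{\dim R}}\,\ell_R(\mathcal C_m)\in[0,\infty)$. It then remains to check that $\mathrm{Vol}(X,0)=0$ is equivalent to valuative log canonicity. One direction is immediate from Theorem~4.4. For the other, if $R$ is not valuative log canonical there is an exceptional prime divisor $E_0$ over $R$ centered at $0$ with $\ord_{E_0}(\env(K_X))>\ord_{E_0}(K_W)+1$; since $\pi_*\sO_W(m(K_W+E^\pi))$ is contained in the single-valuation ideal $\{h:\ord_{E_0}(h)\ge\beta m\}$ for a suitable $\beta>0$, $\mathcal C_m$ surjects onto $\sO_X(mK_X)/\{h:\ord_{E_0}(h)\ge\beta m\}$, and the latter has length $\gtrsim m^{\dim R}$ because $\ord_{E_0}$ is a divisorial valuation centered at $0$ of positive volume. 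Thus it suffices to prove $\mathrm{Vol}(X,0)=0$.

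\emph{The self-map, and a ramification formula for the nef envelope.} Let $f=\Spec\phi\colon X\to X$. As $\phi$ is a finite local homomorphism and $X$ has a unique closed point, $f$ has finite fibres and $f^{-1}(0)=\{0\}$; consequently $\ell_R(f_*M)=[\phi_*k:k]\cdot\ell_R(M)$ for every finite-length $R$-module $M$, where $f_*M$ denotes $M$ with $R$ acting through $\phi$. The generic degree of $f$ is $\deg(\phi)$, so for any divisorial valuation $w$ over $X$ its preimages under the induced map on valuations satisfy $\sum_{f_\bullet v=w}e(v)[\kappa(v):\kappa(w)]=\deg(\phi)$. Finally, $f$ being finite between normal varieties, there is an effective ramification divisor $\mathrm{Ram}_f$ with $K_X\sim f^*K_X+\mathrm{Ram}_f$. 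The crucial — and hardest — step is to upgrade this to the nef envelope: I would prove that $\env_X(f^*K_X)=f^*\env_X(K_X)$, and hence that the log discrepancy of any prime divisor $v$ over $X$ with respect to $\env_X(K_X)$ obeys a Riemann--Hurwitz identity $\AX(v)=e(v)\,\AX(f_\bullet v)-v(\mathrm{Ram}_f)$; equivalently, that the adjoint-type sheaves $\pi_*\sO_W(m(K_W+E^\pi))$ are compatible with pullback along $f$ up to twisting by the effective divisor $m\,\mathrm{Ram}_f$. Since $K_X$ need not be $\Q$-Cartier and $\env$ is defined only valuatively, proving this requires careful control of codimension $\ge 2$ behaviour and of the effectivity of $\mathrm{Ram}_f$; comparing the log discrepancy functions $\AX$ and $\AY$ on well-chosen formal models of source and target is the natural way to organise it, and I expect essentially all of the technical content of the theorem to sit here.

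\emph{The volume is contracted.} Granting the previous step, I would compare the adjoint systems of $X$ with those of $X$ pulled back along $f$. Pushing a finite-length cokernel forward along $f$ multiplies its length by $[\phi_*k:k]$ by the computation above; on the other hand, since $f$ has degree $\deg(\phi)$ and $\sum e(v)[\kappa(v):\kappa(w)]=\deg(\phi)$, the cokernels over $X$ account for $\deg(\phi)$ copies of them after pullback; and the extra effective term $m\,\mathrm{Ram}_f$ in the Riemann--Hurwitz formula only enlarges the adjoint sheaves, hence only lowers the colengths. Carrying this bookkeeping through and passing to the limit should give
\[
\deg(\phi)\cdot\mathrm{Vol}(X,0)\ \le\ [\phi_*k:k]\cdot\mathrm{Vol}(X,0).
\]
As $\mathrm{Vol}(X,0)$ is finite and nonnegative while $\deg(\phi)>[\phi_*k:k]$, this forces $\mathrm{Vol}(X,0)=0$, and therefore, by the first step, $R$ has valuative log canonical singularities. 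The main obstacle, to reiterate, is the ramification formula for the nef envelope under the finite morphism $f$ in the non-$\Q$-Gorenstein setting; once that is in place, the remainder is elementary length bookkeeping with divisorial valuations.
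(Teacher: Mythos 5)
Your overall strategy is the same as the paper's: attach a nonnegative finite ``volume'' to $(X,0)$ whose vanishing characterizes valuative log canonicity, show that the endomorphism contracts it by the factor $\tilde d=\deg(\phi)/[\phi_*k:k]>1$, and conclude that it vanishes. Your first step (support of $\mathcal C_m$ at $0$, the uniform bound $\m^{cm}\sO_X(mK_X)\subseteq\pi_*\sO_W(m(K_W+E^\pi))$, and the equivalence of vanishing with valuative log canonicity via the positivity of the volume of a divisorial valuation) is sound and parallels Propositions 4.6, 4.8 and 4.9 of the paper, with Izumi's theorem as the common underlying input. However, both load-bearing steps of your argument are deferred rather than proved, and the second one is a genuine gap in the route you chose.

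Concretely: (a) the compatibility $\env_Y(f^*D)=f^*\env_X(D)$ and the resulting inequality $\AY^+\le f^*\AX^+$ are needed, but you overestimate their difficulty --- the paper proves them in a few lines (Propositions 3.11 and 4.10) from the negativity lemma, the fact that $f_*$ of a nef $b$-divisor is nef (via norms of ideals), and the Riemann--Hurwitz computation you describe. (b) The real difficulty in your version is the contraction inequality for the \emph{length-based} volume $\limsup_m \frac{n!}{m^n}\ell_R(\mathcal C_m)$. The modules $\mathcal C_m$ are cokernels of inclusions of rank-one reflexive sheaves, not colengths of a single $\m$-primary ideal, so the multiplicativity $e(\fa R)=\tilde d\cdot e(\fa)$ of Hilbert--Samuel multiplicities under finite maps does not apply directly; ``carrying this bookkeeping through and passing to the limit'' would require an asymptotic multiplicity theory for graded systems of submodules under finite morphisms, which is precisely the hard analytic content and is not supplied. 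The paper avoids this entirely by defining $\Vol^+:=-(\env_\X(\AX^+))^n$ through the intersection numbers of nef $b$-divisors over $0$ from \cite{BdFF}, for which the projection formula yields $f^*W_1\cdot\ldots\cdot f^*W_n\le\tilde d\,(W_1\cdot\ldots\cdot W_n)$ (Proposition 3.12) and Izumi's theorem yields the nondegeneracy $W_1\cdot\ldots\cdot W_n=0\Rightarrow W_i=0$ (Proposition 3.8). To complete your proof you must either identify your length-based volume with the intersection-theoretic one (itself a substantial theorem) or give the graded-system multiplicity argument directly; as written, the inequality $\deg(\phi)\cdot\mathrm{Vol}\le[\phi_*k:k]\cdot\mathrm{Vol}$ is asserted, not established.
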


Assume that the non-valuative log canonical locus of $X$ is not empty, and take an irreducible component $Z$.
Since $Z$ is totally invariant up to replacing $f$ by some iterate,
$f$ induces an endomorphism of the local ring $\sO_{X,\eta}$ at the generic point $\eta$ of $Z$.
Applying Theorem 1.3 to $\sO_{X,\eta}$, we have $\deg (f) = [f_* \kappa (Z) : \kappa (Z)] $,
where $\kappa (Z)$ is the residue field of $Z$.
Since $[f_* \kappa (Z) : \kappa (Z)] $ is equal to $\deg (f |_Z) $,
we see that $\deg (f) = \deg (f |_Z)$, but it contradicts the fact that $f$ is a non-invertible polarized endomorphism.
Thus, we have the following main theorem, which gives an affirmative answer to Conjecture 1.1.

\begin{thm}[\textup{Theorem 5.2}]
Let $X$ be a normal complex projective variety admitting a non-invertible polarized endomorphism.
Then $X$ has valuative log canonical singularities.
\end{thm}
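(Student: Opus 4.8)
The plan is to argue by contradiction, following the outline already sketched in the introduction but filling in the two deduction steps that are left implicit there. Suppose $X$ does \emph{not} have valuative log canonical singularities. Then the non-valuative-log-canonical locus $\mathrm{NvLC}(X) \subseteq X$ is a nonempty closed subset; let $Z$ be an irreducible component of it, with generic point $\eta$. The first task is to show that, after replacing $f$ by a suitable iterate $f^N$, the subvariety $Z$ is totally invariant, i.e. $f^{-1}(Z) = Z$. For this I would invoke the standard dynamical fact for polarized endomorphisms: $f$ is a finite morphism, $f^* H \equiv q H$ for an ample $H$ and an integer $q > 1$, and the locus $\mathrm{NvLC}(X)$ is intrinsic to the singularities of $X$, hence is preserved by the local isomorphism type of $X$ and so satisfies $f^{-1}(\mathrm{NvLC}(X)) = \mathrm{NvLC}(X)$ (this uses that $f$ is \'etale in codimension one away from the branch locus and that valuative log canonicity is a local analytic condition stable under the finite \'etale-in-codimension-one maps induced by $f$; one should cite the functoriality of nef envelopes under such morphisms). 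Since $f$ permutes the finitely many irreducible components of $\mathrm{NvLC}(X)$, some iterate fixes $Z$, and we may assume $f(Z) = Z$ and $f^{-1}(Z) = Z$.

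Next I would pass to the local ring $R := \sO_{X,\eta}$. Because $Z$ is totally invariant, $f$ induces a finite local homomorphism $\phi : R \to R$; it is injective because $f$ is dominant, and the residue field extension is $k = \kappa(Z) \hookrightarrow \phi_* k = \kappa(Z)$ viewed via $f|_Z$, so $[\phi_* k : k] = \deg(f|_Z)$. Likewise $\deg(\phi) = \deg(f)$, since the degree of a finite dominant morphism can be computed at the generic point of any totally invariant subvariety. Now $R$ is a normal local ring essentially of finite type over $\C$, and by construction $\Spec R \setminus \{\m\}$ corresponds to the complement in (a neighborhood of the generic point of) $Z$ of $Z$ itself; since $Z$ was chosen to be a component of the non-valuative-log-canonical locus, shrinking suitably we get that $\Spec R \setminus \{\m\}$ \emph{does} have valuative log canonical singularities while $\Spec R$ does not. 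Applying the contrapositive of Theorem 1.3 (Theorem 4.11) to $\phi : R \to R$ forces $\deg(\phi) \le [\phi_* k : k]$, that is, $\deg(f) \le \deg(f|_Z)$. On the other hand $\deg(f|_Z) \le \deg(f)$ always holds for a finite $f$ with $f^{-1}(Z) = Z$, so we conclude $\deg(f) = \deg(f|_Z)$.

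Finally I would derive the contradiction from the polarization. A non-invertible polarized endomorphism has $\deg(f) = q^{\dim X}$ with $q > 1$, and $f|_Z$ is again polarized (by the restriction $H|_Z$) with the same $q$, so $\deg(f|_Z) = q^{\dim Z}$. The equality $\deg(f) = \deg(f|_Z)$ then gives $\dim Z = \dim X$, i.e. $Z = X$; but then $f = f|_Z$ would have to be an isomorphism in codimension one with $\deg(f) = \deg(f|_X) = \deg f$, which is consistent only if $\dim X = 0$, contradicting that $f$ is non-invertible (equivalently $q>1$ and $\dim X \ge 1$). Hence $\mathrm{NvLC}(X) = \emptyset$ and $X$ has valuative log canonical singularities.

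The main obstacle I expect is the first step: rigorously establishing that $\mathrm{NvLC}(X)$ is totally invariant under $f$, or at least forward-and-backward invariant after an iterate. This requires knowing that the nef envelope $\env_X(K_X)$, and hence the minimal log discrepancy function, behaves correctly under the finite morphism $f$ — in particular that $f$ being \'etale in codimension one (which holds because $f^*K_X - K_X$ is the ramification divisor but its push-forward behavior is controlled by the polarization) makes valuative log canonicity pull back and push forward along $f$. One must be careful that $f$ need not be \'etale in codimension one in general; the correct statement is that the ramification contributes a boundary that is again preserved, and the cleanest route is probably to use the characterization in Theorem 1.2(2) in terms of push-forwards of $\sO(m(K+E))$, check its compatibility with the finite morphism $f$ directly, and deduce invariance of the locus where it fails. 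I would isolate this as a lemma before running the contradiction argument above.
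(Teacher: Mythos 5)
Your overall strategy is the same as the paper's (argue by contradiction, localize at the generic point of a component $Z$ of the non-valuative-lc locus, apply the local Theorem 1.3 to get $\deg(f)=\deg(f|_Z)$, and contradict the polarization via $\deg(f)=q^{\dim X}$, $\deg(f|_Z)=q^{\dim Z}$). But your first step contains a genuine gap, which you yourself flag without closing. You assert $f^{-1}(\mathrm{NvLC}(X))=\mathrm{NvLC}(X)$ on the grounds that the locus is ``intrinsic'' and that $f$ is \'etale in codimension one; neither holds. A polarized endomorphism is in general ramified in codimension one, and precisely because of the ramification the valuative log discrepancy is \emph{not} preserved under $f$ --- one only gets an inequality. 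The paper's mechanism is Proposition 4.10: $\AX^+ \leq f^{*}\AX^+$, proved via superadditivity of nef envelopes applied to the ramification divisor ($\env_Y(K_Y)\geq \env_Y(R_f)+f^{*}\env_X(K_X)$ and $\env_Y(R_f)\geq 0$ since $R_f\geq 0$). Writing $f^{*}E=\sum r_iF_i$, this yields $\ord_{F_i}(\AX^+)\leq r_i\,\ord_E(\AX^+)<0$, hence only the inclusion $f^{-1}(\mathrm{Nvlc})\subseteq \mathrm{Nvlc}$. To upgrade this to total invariance of $Z$ after an iterate, the paper runs an induction on the codimension of the components: if $f^{-1}(Z)$ were not itself a component, it would sit inside a larger component $Z'$, which by induction is totally invariant after an iterate, forcing $Z\subseteq f(Z')=Z'$, contradicting that $Z$ is a component; hence $f^{-1}$ permutes the components of each dimension and some iterate fixes $Z$. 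Your proposal is missing both the one-sided inequality and the induction; replacing your appeal to \'etaleness with these two ingredients is exactly what is needed.

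Two smaller points. First, your final contradiction is garbled: from $\deg(f)=\deg(f|_Z)$ and the polarization you correctly get $\dim Z=\dim X$, but the contradiction is simply that $Z$ is a \emph{proper} closed subset of $X$ (the locus $\mathrm{Nvlc}$ is contained in the non-smooth locus of any log resolution's image, so $\mathrm{Nvlc}\neq X$), so $\dim Z<\dim X$ and $q^{\dim Z}<q^{\dim X}$ already; the subsequent sentence about $f=f|_Z$ being ``an isomorphism in codimension one'' does not follow and is not needed. Second, your localization step and the identification $[\phi_*k:k]=\deg(f|_Z)$, $\deg(\phi)=\deg(f)$ match the paper and are fine.
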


When $X$ is $\Q$-Gorenstein, Theorem 1.4 is nothing but \cite[Theorem 1.3]{BH},
but since we do not use the existence of the log canonical model,
our proof is different from the proof given in \cite{BH}.

We also discuss another important conjecture on polarized endomorphisms.

\begin{conj}[\textup{\cite[Conjecture1.2]{BG}}]
Let $X$ be a normal complex projective variety admitting a non-invertible polarized endomorphism.
Then $X$ is of Calabi-Yau type.
\end{conj}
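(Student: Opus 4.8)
Here is a plan for deriving Conjecture 1.6 from the results above. Recall that $X$ is of Calabi-Yau type means there is an effective $\Q$-divisor $\Delta$ on $X$ with $(X,\Delta)$ log canonical and $K_X+\Delta$ $\Q$-linearly trivial, so the task is to produce such a $\Delta$. Fix an ample divisor $H$ and an integer $q\ge 2$ with $f^{*}H\sim qH$, and let $R_f\ge 0$ denote the ramification divisor of the finite morphism $f$, an honest Weil divisor on $X$. By Theorem 1.4 we already know that $X$ has valuative log canonical singularities, so $\env_X(K_X)$ is the object we may compute with. The plan rests on two ingredients: a ramification formula at the level of nef envelopes, and the numerical fact that, after replacing $f$ by a suitable iterate, the class of $K_X$ lies in the $q$-eigenspace of $f^{*}$ acting on $N^1(X)_\R$ (equivalently $R_f\equiv(1-q)K_X$), which one expects from the structure theory of polarized endomorphisms.

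The first main step is the valuative ramification formula. Where $K_X$ is $\Q$-Cartier one has $K_X=f^{*}K_X+R_f$, and I would upgrade this to the identity of nef envelopes
\[
\env_X(K_X)=f^{\sharp}\!\big(\env_X(K_X)\big)+R_f ,
\]
where $f^{\sharp}$ is the operation on compatible systems of divisors induced by base change along $f$; explicitly, $\ord_E\big(\env_X(K_X)\big)=\ord_E\big(f^{\sharp}\env_X(K_X)\big)+\ord_E(R_f)$ for every prime divisor $E$ over $X$. This is exactly where valuative log canonicity enters: the pushforward characterization of Theorem 1.2, namely $\pi_{*}\sO_{W_\pi}(m(K_{W_\pi}+E^{\pi}))=\sO_W(mK_W)$, is what makes $f^{\sharp}\env_X(K_X)$ a well-defined nef envelope and lets the formula be checked valuationwise. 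Iterating and using the chain rule $R_{f^{N}}=\sum_{i=0}^{N-1}(f^{i})^{*}R_f$ yields $\env_X(K_X)-(f^{N})^{\sharp}\env_X(K_X)=R_{f^{N}}$; granting the numerical input above, this says that for every $N$ the effective $\Q$-divisor $\Delta_N:=\tfrac{1}{q^{N}-1}R_{f^{N}}$ has $K_X+\Delta_N$ numerically trivial.

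The second step is to extract from $\{\Delta_N\}$ a single honest effective $\Q$-divisor. The coefficient of $\Delta_N$ along a prime divisor $E$ is $(e_E(f^{N})-1)/(q^{N}-1)$, and along the tower $E=E_0\mapsto E_1\mapsto\cdots$ determined by $f$ one has $e_E(f^{N})=\prod_{i<N}e_{E_i}(f)$, so the coefficients remain in $[0,1]$ precisely when $e_E(f)\le q$ for all $E$; I would deduce this, together with the fact that the support of every $\Delta_N$ lies in a fixed reduced divisor, from a total-invariance argument for the ramification locus of the kind already used before Theorem 1.4. Then the $\Delta_N$ stabilize to an effective $\Q$-divisor $\Delta$ with coefficients in $[0,1]$, and a local computation in the spirit of Theorem 1.3 transfers valuative log canonicity of $X$ to $(X,\Delta)$. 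Finally $K_X+\Delta$ is numerically trivial and $f^{*}(K_X+\Delta)\equiv q(K_X+\Delta)$; these two facts together force $K_X+\Delta$ to be torsion, hence $\Q$-linearly trivial, so $X$ is of Calabi-Yau type.

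The main obstacle will be the non-$\Q$-Gorenstein control throughout Steps 1 and 2: constructing the operation $f^{\sharp}$ on nef envelopes compatibly with ramification, establishing the numerical input on the eigenspace position of $K_X$, and — most seriously — promoting the valuative construction to an \emph{honest} $\Q$-divisor $\Delta$ with $K_X+\Delta$ actually $\Q$-Cartier, which is where the ramification-index bound $e_E(f)\le q$ and the stabilization of the $\Delta_N$ are essential. Without that last promotion, the same argument should still produce unconditionally a \emph{valuative} Calabi-Yau structure on $X$, so the residual content of Conjecture 1.6, beyond what Theorem 1.4 already supplies, is exactly the passage from the valuative to the birational-geometric statement.
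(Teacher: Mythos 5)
The statement you were asked about is Conjecture 1.5 of the paper, and the paper does \emph{not} prove it: it remains open there. What the paper actually establishes is (i) the valuative log canonicity of $X$ (Theorem 1.4/5.2), (ii) a ``weak'' affirmative answer via affine cones (Corollary 5.3 and Remark 5.4), and (iii) the conjecture only under the extra hypothesis that $f$ is \'etale in codimension one (Theorem 6.10), in which case $R_f=0$ and every difficulty your plan is trying to address disappears: one gets $f^*K_X=K_X$, deduces that $K_X$ is ($\Q$-)Cartier from the non-numerically-$\Q$-Gorenstein-locus argument, and then $K_X\sim_\Q aH$ with $aq=a$ forces $a=0$. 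So your proposal should be judged as an attempt at an open problem, and it has genuine gaps precisely at the points where the problem is hard.

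Concretely: (1) Your ``valuative ramification formula'' $\env_X(K_X)=f^{\sharp}\env_X(K_X)+R_f$ is not available. Nef envelopes are only superadditive (Proposition 2.10), and the paper's Proposition 4.10 proves only the inequality $\env_Y(K_Y)\geq \env_Y(R_f)+f^*\env_X(K_X)$; Theorem 4.4 gives a pushforward identity for $\sO_{X_\pi}(m(K_{X_\pi}+E^\pi))$, which does not upgrade this inequality to an equality. (2) The ``numerical input'' $R_f\equiv(1-q)K_X$ is not even well-posed when $K_X$ is not (numerically) $\Q$-Cartier, since $K_X$ then has no class in $N^1(X)_\R$; and even in the $\Q$-Gorenstein case $K_X$ is not known to be an eigenvector of $f^*$ --- this is an assumption, not a consequence of the structure theory. (3) The coefficient bound $e_E(f)\leq q$ for all prime divisors $E$, the containment of $\Supp\Delta_N$ in a fixed reduced divisor, and the stabilization of the $\Delta_N$ are all unproven; this is exactly the known obstruction (bounding ramification along the orbit of a divisor) that keeps the conjecture open. (4) Even granting a numerically trivial $K_X+\Delta$, passing to $\Q$-linear triviality requires an argument; the paper does this only in the unramified case via the affine cone and \cite[Lemma 2.32]{BdFF}. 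Your closing paragraph honestly flags most of this, but as written the proposal assumes the hard steps rather than proving them, so it does not constitute a proof of the conjecture.
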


Here we say that $X$ is of \emph{Calabi-Yau type} if
there exists an effective $\Q$-Weil divisor $\Delta$ on $X$ such that
$K_X + \Delta $ is $\Q$-linearly trivial and $(X,\Delta)$ is a log canonical pair.
Broustet and Gongyo proved in \cite{BG} that
Conjecture 1.5 holds if
$X$ is $\Q$-Gorenstein and $f$ is \'{e}tale in codimension one.
We generalize their result to the case where $X$ is not necessarily $\Q$-Gorenstein.

\begin{thm}[\textup{Theorem 6.10}]
Let $X$ be a normal complex projective variety admitting a non-invertible polarized endomorphism $f$.
Suppose that $f$ is \'{e}tale in codimension one.
Then $K_X$ is $\Q$-linearly trivial and $X$ has log canonical singularities,
 and in particular, $X$ is of Calabi-Yau type.
\end{thm}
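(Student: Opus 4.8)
The plan is to use Theorem 5.2 to strip the problem down to the canonical divisor alone, and then to play the polarization against the hypothesis that $f$ is \'etale in codimension one. First I would record that, by Theorem 5.2, $X$ already has valuative log canonical singularities, and that since $f$ is \'etale in codimension one its ramification divisor vanishes, so $f^{[*]}\sO_X(mK_X)\cong\sO_X(mK_X)$ for every $m\ge 1$ (reflexive pull-back), equivalently $f^{*}\env_X(K_X)=\env_X(K_X)$ as compatible systems. I claim it then suffices to prove: (a) $K_X$ is $\Q$-Cartier; (b) $K_X\equiv 0$; (c) $K_X$ is torsion in $\Pic(X)$. Indeed, once (a) holds, $\env_X(K_X)_Y=\pi^{*}K_X$ for every birational model $\pi\colon Y\to X$ by the listed properties of the nef envelope, so the valuative log discrepancies of $X$ coincide with the classical ones and $X$ has log canonical singularities in the usual sense; granting (b) and (c) as well yields $K_X\sim_{\Q}0$, and then $\Delta=0$ exhibits $X$ as a variety of Calabi--Yau type.

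The heart of the matter is (a), and this is the step I expect to be the main obstacle. Let $Z\subsetneq X$ be the non-$\Q$-Cartier locus of $K_X$; it is closed and, since $X$ is normal, of codimension at least two. From $f^{[*]}\sO_X(mK_X)\cong\sO_X(mK_X)$ one checks that $mK_X$ is Cartier at $x$ exactly when it is Cartier at $f(x)$, so $f^{-1}(Z)=Z$ as sets, and after replacing $f$ by a suitable iterate every component of $Z$ is totally invariant. Assuming $Z\neq\emptyset$, pick the generic point $\eta$ of a component; then $f$ induces a finite injective local homomorphism $\phi\colon\sO_{X,\eta}\to\sO_{X,\eta}$, and exactly as in the proof of Theorem 5.2 one has $\deg(\phi)=[\phi_{*}\kappa(\eta):\kappa(\eta)]=\deg(f|_Z)$, while a non-invertible polarized endomorphism forces $\deg(f)>\deg(f|_Z)$. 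The task is to convert this into a contradiction with non-$\Q$-Cartier-ness at $\eta$: running, in this local $\Q$-Gorenstein setting, the analysis behind Theorem 4.11, one uses that $f^{*}$ fixes $\env_X(K_X)$ while the polarization is strictly expanded ($f^{*}H\equiv qH$ with $q>1$) to show, on a tower of models obtained by base change along $f$, that the decreasing limit defining $\env_X(K_X)$ stabilizes at finite level; equivalently, that $\bigoplus_{m\ge 0}\sO_X(mK_X)$ is locally finitely generated, which is precisely $\Q$-Gorenstein-ness and forces $Z=\emptyset$. Making this stabilization/finite-generation argument rigorous in the non-$\Q$-Gorenstein language is where the real work lies.

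Granting (a), the divisor $K_X$ is $\Q$-Cartier, hence $f^{*}K_X\sim_{\Q}K_X$ (as $f$ is \'etale in codimension one) and so $f^{*}K_X\equiv K_X$. Writing $n=\dim X$ and recalling $\deg(f)=q^{n}$, the projection formula for the finite morphism $f$ gives, for each $0\le j\le n$,
\[
q^{n-j}\,(K_X^{j}\cdot H^{n-j})=(f^{*}K_X)^{j}\cdot(f^{*}H)^{n-j}=\deg(f)\,(K_X^{j}\cdot H^{n-j})=q^{n}\,(K_X^{j}\cdot H^{n-j}),
\]
so $K_X^{j}\cdot H^{n-j}=0$ for $j\ge 1$; in particular $K_X\cdot H^{n-1}=K_X^{2}\cdot H^{n-2}=0$, and the Hodge index theorem yields $K_X\equiv 0$, which is (b). For (c), fix $m$ with $N:=\sO_X(mK_X)\in\Pic^{\tau}(X)$, so that $f^{*}N\cong N$. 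Since $X$ is log canonical it is Du Bois (Koll\'ar--Kov\'acs), hence $H^{1}(X,\C)$ surjects onto $H^{1}(X,\sO_X)$ and, via the exponential sequence, $\Pic^{0}(X)=H^{1}(X,\sO_X)/H^{1}(X,\Z)$; as $f$ is polarized, $f^{*}$ has no eigenvalue equal to $1$ on $H^{1}(X,\Q)$ (equivalently, the induced isogeny of the Albanese variety has finite fixed locus), so $f^{*}-\mathrm{id}$ is invertible on $H^{1}(X,\Q)$ and surjective on $H^{1}(X,\sO_X)$, which forces the class of $N$ in $\Pic^{0}(X)$ to be torsion; since $\Pic^{\tau}/\Pic^{0}$ is finite, $N$ is torsion, i.e.\ $K_X\sim_{\Q}0$. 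Combined with (a) this shows $X$ has log canonical singularities with $K_X$ $\Q$-linearly trivial, and $\Delta=0$ then realizes $X$ as a variety of Calabi--Yau type, completing the argument.
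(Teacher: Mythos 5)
Your overall skeleton (first show $K_X$ is $\Q$-Cartier, then numerically trivial, then torsion) is reasonable, but the decisive step (a) is not actually proved, and you say so yourself: the ``stabilization/finite-generation argument'' is exactly where the theorem lives, and the sketch you give for it contains a false equivalence. Finite generation of $\bigoplus_m \sO_X(mK_X)$ is \emph{not} ``precisely $\Q$-Gorenstein-ness'': it only gives a small projective model $\Proj_X\bigoplus_m\sO_X(mK_X)\to X$, and one needs an additional numerical input to force that model to be the identity (this is the content of Proposition 6.6 in the paper, which requires $K_X$ to be \emph{numerically} $\Q$-Cartier as a hypothesis). Likewise ``the decreasing limit defining $\env_X(K_X)$ stabilizes at finite level'' is neither established nor equivalent to finite generation as stated. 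The paper's actual mechanism, which is absent from your proposal, is: (i) locally at the generic point of a component of the bad locus, apply the intersection-number argument to the $X$-nef $b$-divisor $\env_X(K_X)+\env_X(-K_X)$ (which is $f^*$-invariant since $f$ is \'etale in codimension one) to get $\Gamma\ge\tilde d\cdot\Gamma$ and hence $\env_X(K_X)+\env_X(-K_X)=0$, i.e.\ $K_X$ is \emph{numerically} $\Q$-Cartier there (Theorem 6.5 via Proposition 6.1); and (ii) upgrade numerical $\Q$-Cartier to $\Q$-Cartier using Theorem 6.9, which identifies the non-numerically-$\Q$-Gorenstein locus with the non-$\Q$-Gorenstein locus for valuative log canonical varieties by means of Hashizume's partial log canonical model and Proposition 6.6. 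Without (i) and (ii), or a substitute for them, your argument does not get off the ground.

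Your steps (b) and (c) are also organized differently from the paper and carry their own debts. The paper never passes through $K_X\equiv 0$ and a Picard-variety argument: instead it forms the affine cone $\tilde X$ over $H$, shows $K_{\tilde X}$ is $\Q$-Cartier at the vertex (again via Theorem 6.5 and Proposition 6.6), deduces $K_X\sim_\Q aH$ from \cite[Lemma 2.32]{BdFF}, and concludes $a=0$ from $aH\sim_\Q f^*K_X\sim_\Q aqH$ with $q>1$. This is both shorter and avoids the unsupported claims in your (c) (Du Bois-ness, surjectivity of $f^*-\mathrm{id}$ on $H^1(X,\sO_X)$, absence of eigenvalue $1$ on $H^1(X,\Q)$), none of which are available from the paper's toolkit. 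Your intersection-number computation in (b) is fine as far as it goes, but in the paper's approach it is simply not needed.
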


To prove this theorem, we make use of the following properties of numerically $\Q$-Cartier divisors introduced by \cite{BdFF}.

\begin{itemize}
\item Let $D$ be a Weil divisor on $Z$, and then $D$ is numerically $\Q$-Cartier 
if and only if $\env_W(D) + \env_W(-D) = 0 $.
\item If $\displaystyle{\bigoplus_m} \sO_W(mD) $ is finitely generated, then 
$D$ is numerically $\Q$-Cartier if and only if $D$ is $\Q$-Cartier.

\end{itemize}

\begin{acknowledgement}
The author wishes to express his gratitude to his supervisor Professor Shunsuke Takagi for his encouragement, valuable advice and suggestions.
He is also grateful to Professor Yoshinori Gongyo, Professor Andreas H\"{o}ring,
Professor S\'{e}bastien Boucksom and Dr. Kenta Hashizume  for their helpful comments and suggestions.
This work was supported by the Program for Leading Graduate Schools, MEXT, Japan.

\end{acknowledgement}

\section{Shokurov's $b$-dividors}
In this section, we recall the definition and some basic properties of b-divisors.
See \cite{BdFF} for the details.

\subsection{Weil $b$-divisors and Cartier $b$-divisors}

\begin{conv}
Throughout this paper,X is a normal integral scheme 
essentially of finite type over $\C$.
If $\pi:X_\pi \to X$ is a projective birational morphism and $X_\pi $ is normal, then we say $\pi$ is a birational model over X.
And if $\pi$ and $\pi' $ are birational models and the birational morphism $X_{\pi'} \to X_{\pi} $  over $X$ exists,
then we denote $\pi' \geq \pi $. 
\end{conv}

\begin{defn}[\textup {cf. \cite[Subsection 1.2]{BdFF}}]
We define the group of \emph{Weil b-divisors} over $X$ as
\[
\mathrm{Div}(\X) :=\plim[\pi]\mathrm{Div}(X_{\pi})
\]
where $ \mathrm{Div} (X_\pi) $ denotes the group of Weil divisors of $X_\pi$ and the limit is taken 
with respect to the push-forward maps  
$ \mathrm{Div}(X_{\pi'}) \rightarrow \mathrm{Div}(X_\pi) $
,which are defined whenever $\pi' \geq \pi $. 

A Weil b-divisor $W$ over $X$ consists of a family of Weil divisors $W_\pi \in \mathrm{Div}(X_\pi)$ 
that are compatible under push-forward. 
We say $W_\pi$ is the \emph{trace} of $W$ on the model $X_{\pi} $.

\end{defn}

\begin{defn}(\textup {cf. \cite[Subsection 1.2]{BdFF}})
The group of \emph{Cartier b-divisors} over $X$ is defined as
\[
\mathrm{CDiv}(\X) := \ilim[\pi]\mathrm{CDiv}(X_\pi)
\]
with $\mathrm{CDiv}(X_\pi)$ denoting the group of Cartier divisors of $X_\pi$.
Here the limit is taken with respect to the pull-back maps 
$\mathrm{CDiv}(X_\pi) \to \mathrm{CDiv}(X_{\pi'})$,
which are defined whenever  $\pi' \geq \pi $. 
We denote $\bar{C_\pi}$ the image of Cartier divisor $C_\pi$ on $X_\pi$ of the map 
$\mathrm{CDiv}(X_\pi) \to \mathrm{CDiv}(\X)$
\end{defn}

There are an injection $\mathrm{CDiv}(\X) \to \mathrm{Div}(\X)$ determined by the cycle maps on   birational models $X_\pi$.

Let $C$ be a Cartier b-divisor over $X$. 
We can find a birational model $\pi$ such that $C = \bar{C_\pi}$.
We call $C$ is determined by a birational model $\pi$.

An element of $\mathrm{CDiv}_\R (\X) := \ilim[\pi]\mathrm{CDiv}_\R (X_\pi)$
 (resp.$\mathrm{CDiv}_\R (\X) := \ilim[\pi]\mathrm{CDiv}_\R (X_\pi)$)
 will be called a $\R$-Weil b-divisor (resp. $\R$-Cartier b-divisor), 
and similarly with $\Q$ in replace of $\R$.

\begin{eg}
\ 
\begin{enumerate}
\item The system of the canonical divisors ($K_{X_\pi}$)$_\pi$ is an inverse system of divisors.
We denote the corresponding b-divisor by $K_{\X} \in \mathrm{Div}(\X)$, and we call this the \emph{canonical $b$-divisor}.

\item Given a coherent fractional ideal $\fa$ on $X$,
 we denote $Z(\fa)$ the Cartier $b$-divisor determined on 
the normalized blow-up $X_\pi$ of $X$ along $\fa$ by
\[
\fa \cdot \sO_{X_\pi} = \sO_{X_\pi} (Z(\fa)_\pi)
\]
i,e, $Z(\fa) = \bar{Z(\fa)_\pi}$. If $\fa$ is contained in $\sO_X$, then $Z(\fa) \leq 0$. 
\end{enumerate}
\end{eg}

\begin{defn}\label{convergence of $b$-divisor}[\textup {cf. \cite[Lemma 1.12]{BdFF}}]
Let $\{W_i\}_{i \in I}$ be a net of $\R$-Weil $b$-divisors and $W$ be an $\R$-Weil $b$-divisor.
Now, $\{W_i\}_{i \in I}$ converges to $W$ if the following conditions are satisfied.
\begin{enumerate}
\item There exists a finite dimensional vector space $V$ in $\mathrm{Div}_{\R} (X)$
such that $W_{i,X} \in V$ for any $i \in I$ and
\item For any prime divisor $E$ over $X$,
$\ord_E (W_i)$ converges to $\ord_E (W)$ in $\R$.
\end{enumerate}
\end{defn}

\begin{eg}
We say $\fa_\bullet = (\fa_m)_{m \geq 0}$ is a \emph{graded sequence of fractional ideal sheaves} if $\fa_0 = \sO_X$, 
each $\fa_m$ is a fractional ideal sheaf of $X$ and $\fa_k \cdot \fa_m \subset \fa_{m+k}$ for every $k,m$.
We say that $\fa_\bullet$ has \emph{linearly bounded denominators} if there exists a Weil divisor 
$D$ on $X$ such that $\sO_X (mD) \cdot \fa_m \subset \sO_X$ for any m. 

If $\fa_\bullet$ is a graded sequence of fractional ideal sheaves with linearly bounded denominators, 
then the sequence of $b$-divisors $ \{ \frac{1}{m} Z(\fa_m) \} $ converges.
We denote
\[
Z(\fa_\bullet) = \lim \frac{1}{m} Z(\fa_m)
\]
and we say $Z(\fa_\bullet)$ is the $b$-divisor associated to a graded sequence of fractional ideals 
$\{ \fa_m \}$.
\end{eg}
\subsection{Nef $b$-divisors and nef envelopes}

In this subsection, we recall the definition of nef $b$-divisors, nef envelopes and the negativity lemma for $b$-divisor(see Proposition 2.12.) which is very important tool to study nef $b$-divisors.

\begin{defn}[\textup{cf. \cite[Definition 2.9]{BdFF}}] 
An $\R$-Cartier $b$-divisor $D$ over $X$ is an \emph{$X$-nef} $\R$-Cartier $b$-divisor if 
$D$ is determined on a birational model $\pi$ and $D_\pi$ is an $X$-nef divisor on $X_\pi$.

An $\R$-Weil $b$-divisor $W$ over $X$  is \emph{$X$-nef} $\R$-Weil $b$-divisor if
there exists a net of $X$-nef Cartier $b$-divisors $\{W_i \}_i $ such that 
$\{W_i \}_i $ converges to $W$.
\end{defn}

\begin{defn}[\textup{\cite[Definition 2.3]{BdFF}}]
Let $D$ be an $\R$-Weil divisor on a birational model $X_\pi$ over $X$.
The \emph{nef envelope} $\env_\pi (D)$ of $D$ over $X$ is the $b$-divisor associated to
$\{ \pi_* \sO_{X_\pi} (mD) \}$.
If $\pi=\mathrm{id}$, we denote $\env_\pi (D)$ by $\env_X (D)$.
\end{defn}

\begin{eg}
\  
\begin{enumerate}
\item $Z(\fa)$ in example 2.4.(1) is an $X$-nef $\R$-Cartier $b$-divisor over $X$.
Therefore $Z(\fa_\bullet)$ is an $X$-nef $\R$-Weil $b$-divisor over $X$ and 
in particular, nef envelopes are $X$-nef $\R$-Weil $b$-divisors.
\item If $D$ is an $\R$-Cartier divisor on $X$, then $\env_X (D) = \bar {D}$. 
If $D$ is an $\R$-Weil divisor on $X$, then $\env_X (D)_X = D$, but in general, 
the divisor $\env_X (D)$ is not equal to  $-\env_X (-D) $..
\end{enumerate}
\end{eg}

\begin{prop}
Let $D$ and $D'$ be $\R$-Weil divisors on $X_\pi$.Then 
\begin{enumerate}
\item $\env_\pi (D+D') \geq \env_\pi (D) + \env_\pi (D')$ and
\item $\env_\pi(tD) = t \env_\pi (D) $ for any $t \in \R_{\geq 0}$.
\end{enumerate}
\end{prop}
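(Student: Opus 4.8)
The plan is to reduce both assertions to statements about the orders $\ord_E$ along prime divisors $E$ over $X$, since a Weil $b$-divisor is determined by the numbers $\ord_E$, and these detect sums and inequalities of $b$-divisors. By Definition 2.8 one has $\env_\pi(D) = Z(\{\fa_m\})$ with $\fa_m := \pi_*\sO_{X_\pi}(\lfloor mD\rfloor)$, and since convergence of $b$-divisors entails convergence of orders,
\[
\ord_E(\env_\pi(D)) = \lim_{m \to \infty} \tfrac1m \ord_E(Z(\fa_m))
\]
for every such $E$, with the analogous formulas for $D'$ and for $D+D'$. Throughout I will use that $\fc \mapsto Z(\fc)$ is monotone under inclusion of fractional ideal sheaves and additive under their product (Example 2.4).

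For (1), the input is multiplication of sections. From $\lfloor mD\rfloor + \lfloor mD'\rfloor \le \lfloor m(D+D')\rfloor$, multiplying local sections produces an inclusion $\fa_m \cdot \fa'_m \subseteq \pi_*\sO_{X_\pi}(\lfloor m(D+D')\rfloor) =: \fc_m$ of fractional ideal sheaves on $X$, where $\fa'_m := \pi_*\sO_{X_\pi}(\lfloor mD'\rfloor)$. Hence $Z(\fa_m) + Z(\fa'_m) = Z(\fa_m\fa'_m) \le Z(\fc_m)$; dividing by $m$ and letting $m \to \infty$, the left side tends to $\env_\pi(D) + \env_\pi(D')$ and the right side to $\env_\pi(D+D')$, giving (1).

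For (2), fix $E$ and set $g(t) := \ord_E(\env_\pi(tD))$ for $t \ge 0$; it suffices to prove $g(t) = t\,g(1)$, the case $t = 0$ being clear since $\env_\pi(0) = 0$. For a positive integer $q$, the sequence $m \mapsto \tfrac{1}{qm}\ord_E(Z(\pi_*\sO_{X_\pi}(\lfloor qmtD\rfloor)))$ is a subsequence of the convergent sequence computing $g(t)$, hence $g(qt) = q\,g(t)$; taking $t = 1/q$ and then an arbitrary $p/q$ yields $g(r) = r\,g(1)$ for all $r \in \Q_{\ge 0}$. By (1) applied to $sD$ and $tD$, the function $g$ is superadditive: $g(s) + g(t) \le g(s+t)$. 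Finally, writing $D = P - Q$ with $P,Q$ effective without common component and $\Theta$ the reduced divisor on $\Supp Q$, the estimate $-mNQ - \Theta \le \lfloor mtD\rfloor \le mNP$ (valid for $0 \le t \le N$) yields, via the corresponding inclusions of pushforward ideal sheaves, that $\ord_E(Z(\pi_*\sO_{X_\pi}(-mNQ-\Theta)))/m \le \ord_E(Z(\fa_m))/m \le \ord_E(Z(\pi_*\sO_{X_\pi}(mNP)))/m$, and the two outer sequences are bounded; hence $g$ is bounded on $[0,N]$. A superadditive function on $\R_{\ge 0}$ that is linear on $\Q_{\ge 0}$ and bounded on a bounded interval is linear (Cauchy's functional equation), so $g(t) = t\,g(1)$ for all $t \ge 0$. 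Since this holds for every prime divisor $E$ over $X$, we conclude $\env_\pi(tD) = t\,\env_\pi(D)$.

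The step I expect to be the main obstacle is the real-scalar case of (2): when $D$ is a genuine $\R$-Weil divisor there is no $m$ making $mtD$ integral, so the rescaling trick available for $\Q$-divisors does not apply, and one must instead control the round-downs $\lfloor mtD\rfloor$ uniformly to obtain local boundedness of $g$ and then invoke the functional-equation argument. Everything else is formal from the behaviour of $Z(-)$ and of pushforwards along birational models recalled in Section 2.
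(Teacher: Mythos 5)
Your proof is correct, and while part (1) is the same argument as the paper's (the inclusion $\pi_*\sO_{X_\pi}(mD)\cdot\pi_*\sO_{X_\pi}(mD')\subset\pi_*\sO_{X_\pi}(m(D+D'))$ plus monotonicity and additivity of $Z(-)$), your treatment of part (2) for irrational $t$ genuinely differs from the paper's. The paper first asserts ``we may assume $D$ is effective'' and then, for effective $D$, squeezes $\env_\pi(tD)$ between $\env_\pi(s_jD)$ and $\env_\pi(t_jD)$ for rational $s_j\le t\le t_j$, using that $t\mapsto\env_\pi(tD)$ is monotone when $D\ge 0$; the reduction to the effective case is left unjustified and is not entirely innocent (one must add a Cartier divisor and check it splits off the envelope). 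You instead work with an arbitrary $D=P-Q$ directly: fixing $E$ and setting $g(t)=\ord_E(\env_\pi(tD))$, you get $g(qt)=qg(t)$ by the subsequence argument (legitimate because the defining limit of $Z(\fa_\bullet)$ genuinely converges, by Fekete superadditivity), hence $\Q$-linearity, superadditivity of $g$ from part (1), and local boundedness from the sandwich $-mNQ-\Theta\le\lfloor mtD\rfloor\le mNP$; the Cauchy-type functional-equation step then closes the argument (explicitly: $h(t)=g(t)-tg(1)$ satisfies $h(qt)=qh(t)$ and $\lceil qt\rceil-qt,\{qt\}\in[0,1]$ trap $h(qt)$ in a bounded set, forcing $h\equiv 0$). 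What your route buys is that it bypasses the effective reduction entirely and is the one-variable shadow of the concavity-plus-continuity mechanism the paper itself invokes in Corollary 2.11; the minor bookkeeping you should still record is replacing $mNP$ and $-mNQ$ by their integral round-downs so that the outer comparison divisors are honest Weil divisors.
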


\begin{proof}
For each $m \geq 0$ we have
\[
(\pi_* \sO_{X_\pi} (mD) ) \cdot (\pi_* \sO_{X_\pi} (mD') ) \subset \pi_* \sO_{X_\pi} (m(D+D'))
\]
hence we get the first statement.

We may assume $D$ is an effective divisor.
If $t$ is a rational number, then the result is followed directly from the definition of nef envelopes.
In general case, 
\[
t_j \env_\pi (D) = \env_\pi (t_j D) \geq \env_\pi (tD) \geq \env_\pi (s_j D) = s_j \env_\pi (D)
\]
for any $t_j \geq t \geq s_j$ and $s_j , t_j \in \Q_{\geq0}$.
Therefore we get the result by taking limit.
\end{proof}

\begin{cor}
Let $V \subset \mathrm{Div}_\R (X_\pi)$ be a finite dimensional vector space.
For any divisorial valuation $\nu$ over $X$, the map $D \mapsto \nu (\env_\pi (D))$ is continuous on $V$.
\end{cor}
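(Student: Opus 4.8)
The plan is to show that the function $g_\nu(D):=\nu(\env_\pi(D))$ on the finite-dimensional space $V$ is concave and positively homogeneous, and then to convert this into an explicit Lipschitz bound in terms of a fixed set of coordinates on $V$; continuity is then immediate. Everything should follow formally from Proposition 2.12.

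First I would record the elementary properties of $g_\nu$. Writing $\nu=q\cdot\ord_E$ for a prime divisor $E$ over $X$ and some $q\in\Q_{>0}$, the map $W\mapsto\nu(W)$ on $\R$-Weil $b$-divisors is additive, positively homogeneous, and order-preserving (if $W\ge W'$ then $\nu(W)\ge\nu(W')$, which can be checked on the single trace carrying $E$). Combining this with Proposition 2.12 gives, for $D,D'\in V$ and $t\in\R_{\ge0}$,
\[
g_\nu(D+D')\ \ge\ g_\nu(D)+g_\nu(D'),\qquad g_\nu(tD)=t\,g_\nu(D),
\]
so that $g_\nu(0)=0$ and $g_\nu$ is concave, $g_\nu(\lambda D+(1-\lambda)D')\ge\lambda g_\nu(D)+(1-\lambda)g_\nu(D')$ for $\lambda\in[0,1]$. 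Here I would note that $g_\nu$ is real-valued because $\env_\pi(D)$ is a genuine $\R$-Weil $b$-divisor: the graded sequence $\{\pi_*\sO_{X_\pi}(mD)\}$ has linearly bounded denominators, so $\ord_E(\env_\pi(D))\in\R$ (this is part of the construction in \cite{BdFF}).

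Next I would extract a Lipschitz estimate. Fix a basis $D_1,\dots,D_n$ of $V$ and set $M:=\max_{1\le i\le n,\ \epsilon=\pm1}\bigl(-g_\nu(\epsilon D_i)\bigr)$. For $H=\sum_i c_iD_i$ with $\sum_i|c_i|=:t>0$, the divisor $t^{-1}H$ is a convex combination of the $2n$ points $\pm D_i$, so concavity gives $g_\nu(t^{-1}H)\ge -M$, and homogeneity gives $g_\nu(H)\ge -M\sum_i|c_i|$. Applying this to $-H$ and using $g_\nu(H)+g_\nu(-H)\le g_\nu(0)=0$ (superadditivity) yields $g_\nu(H)\le M\sum_i|c_i|$, hence $|g_\nu(H)|\le M\sum_i|c_i|$. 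Finally, for $D,D'\in V$ superadditivity gives $g_\nu(D')-g_\nu(D)\ge g_\nu(D'-D)$ and, by symmetry, $g_\nu(D')-g_\nu(D)\le -g_\nu(D-D')$; writing $D'-D=\sum_i c_iD_i$ we conclude $|g_\nu(D')-g_\nu(D)|\le M\sum_i|c_i|$, which is exactly continuity (indeed Lipschitz continuity) of $g_\nu$ on $V$.

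I do not anticipate a genuine obstacle: the argument is purely formal given Proposition 2.12, and the only point needing a moment's care is that $g_\nu$ is finite-valued. If one prefers to bypass the explicit estimate, the same conclusion follows from the classical fact that a finite concave function on a finite-dimensional real vector space is automatically continuous.
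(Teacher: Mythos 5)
Your proof is correct and follows the same route as the paper: Proposition 2.10 makes $D\mapsto\nu(\env_\pi(D))$ superadditive and positively homogeneous, hence concave and finite on $V$, and the paper then simply invokes the classical fact that finite concave functions on a finite-dimensional space are continuous. You merely spell out that classical fact via an explicit Lipschitz estimate, which is a fine (if more detailed) way to finish.
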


\begin{proof}
Concave functions on a finite dimensional vector space is continuous, hence the result follows.
\end{proof}

\begin{prop}[\textup{Negativity lemma , \cite[Proposition 2.12]{BdFF}}]
Let $W$ be a $X$-nef $\R$-Weil $b$-divisor over $X$.
Then we have $W \leq \env_\pi (W_\pi)$ for any birational model $\pi$ over $X$.
\end{prop}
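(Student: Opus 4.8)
The plan is to reduce the statement, in two continuity-type steps, to the case where $W=\bar{C}$ is the Cartier $b$-divisor of a $\mu$-ample divisor $C$ that already factors through $\pi$, and then to conclude by a direct inclusion of fractional ideal sheaves. First I would reduce to Cartier $b$-divisors: by the definition of an $X$-nef $\R$-Weil $b$-divisor there is a net $\{W_i\}$ of $X$-nef $\R$-Cartier $b$-divisors converging to $W$, and it suffices to prove $W_i\le\env_\pi((W_i)_\pi)$ for each $i$ and then let $i$ vary. The non-$\pi$-exceptional parts of the traces $(W_i)_\pi$ are controlled by $W_{i,X}$, which lie in a fixed finite dimensional subspace of $\Div_\R(X)$ by Definition 2.5, and their coefficients along the finitely many $\pi$-exceptional primes converge, hence are bounded; so all the $(W_i)_\pi$ lie in a single finite dimensional subspace of $\Div_\R(X_\pi)$ and $(W_i)_\pi\to W_\pi$ coefficientwise. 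Corollary 2.11 then gives $\nu(\env_\pi((W_i)_\pi))\to\nu(\env_\pi(W_\pi))$ for every divisorial valuation $\nu$ over $X$, while $\nu(W_i)\to\nu(W)$; passing to the limit in $\nu(W_i)\le\nu(\env_\pi((W_i)_\pi))$ yields $\nu(W)\le\nu(\env_\pi(W_\pi))$ for all $\nu$, i.e. $W\le\env_\pi(W_\pi)$. Thus we may assume $W=\bar{C}$ for an $X$-nef $\R$-Cartier divisor $C$ on a model $\mu\colon Y\to X$, and, after refining $Y$, that $\mu$ factors as $Y\xrightarrow{\rho}X_\pi\xrightarrow{\pi}X$, so $W_\pi=\rho_*C$.

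Next I would reduce to the case where $C$ is $\mu$-ample. Fix an effective $\mu$-ample $\Q$-Cartier divisor $H$ on $Y$ (one exists since $\mu$ is projective) and put $C_j:=C+\tfrac1jH$, which is $\mu$-ample and lies, together with $C$, in the finite dimensional space spanned by the components of $C$ and $H$; perturbing its $\R$-coefficients slightly to rationals we may moreover take $C_j$ to be $\Q$-Cartier while keeping it $\mu$-ample. Since $C_j\to C$ on $Y$ we get $\rho_*C_j\to\rho_*C$ inside a finite dimensional subspace of $\Div_\R(X_\pi)$, so $\env_\pi(\rho_*C_j)\to\env_\pi(\rho_*C)$ by Corollary 2.11 and $\bar{C_j}\to\bar{C}$; hence it suffices to prove $\bar{C_j}\le\env_\pi(\rho_*C_j)$. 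In other words, we may now assume $C$ is a $\mu$-ample $\Q$-Cartier divisor on $Y$.

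Write $D:=\rho_*C$. If $f$ is a rational function with $\Div_Y f+mC\ge 0$ on an open set $\rho^{-1}(V)$, then $\Div_{X_\pi}f+mD=\rho_*(\Div_Y f+mC)\ge 0$ on $V$, because $\rho_*$ preserves effectivity of divisors and leaves $\Div f$ unchanged; hence $\rho_*\sO_Y(mC)\subseteq\sO_{X_\pi}(mD)$, and applying $\pi_*$ we get $\mu_*\sO_Y(mC)\subseteq\pi_*\sO_{X_\pi}(mD)$ (here and below one takes integer parts of $mC$, $mD$ where necessary; note $\rho_*\lfloor mC\rfloor=\lfloor mD\rfloor$). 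Since $\fa\mapsto Z(\fa)$ is monotone for inclusions of fractional ideals (immediate from Example 2.4(2)), this gives $Z(\mu_*\sO_Y(mC))\le Z(\pi_*\sO_{X_\pi}(mD))$. On the other hand, because $C$ is $\mu$-ample, $mC$ is $\mu$-globally generated for all sufficiently divisible $m$, so $\mu^*\mu_*\sO_Y(mC)\to\sO_Y(mC)$ is surjective, i.e. $(\mu_*\sO_Y(mC))\cdot\sO_Y=\sO_Y(mC)$, whence $Z(\mu_*\sO_Y(mC))=\overline{mC}=m\bar{C}$ for such $m$. Combining the two, $m\bar{C}\le Z(\pi_*\sO_{X_\pi}(mD))$, so $\bar{C}\le\tfrac1m Z(\pi_*\sO_{X_\pi}(mD))$, and letting $m\to\infty$ along sufficiently divisible $m$ gives $\bar{C}\le\env_\pi(D)=\env_\pi(W_\pi)$ by Definition 2.8 and Example 2.6.

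The two reductions are routine once one is careful that all the relevant traces stay in fixed finite dimensional spaces so that Corollary 2.11 applies, and I expect that bookkeeping to be the only technical subtlety of the first two steps. The genuine conceptual point is in the last step: for a general $X$-nef $C$ the inequality $\bar{C}\le\env_\mu(C)$ is itself a case of the negativity lemma and so cannot be invoked, but once $C$ is made $\mu$-ample it becomes the honest equality $Z(\mu_*\sO_Y(mC))=m\bar{C}$ coming from relative global generation, and the whole argument then rests only on that equality together with the monotonicity of $\fa\mapsto Z(\fa)$ and the elementary inclusion $\rho_*\sO_Y(mC)\subseteq\sO_{X_\pi}(m\rho_*C)$.
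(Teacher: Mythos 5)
Your proposal is correct and follows essentially the same three-step scheme as the paper's proof: the base case is the identity $m\bar{C}=Z(\mu_*\sO_Y(mC))$ for relatively globally generated multiples combined with the inclusion $\mu_*\sO_Y(mC)\subseteq\pi_*\sO_{X_\pi}(mW_\pi)$ and monotonicity of $Z(\cdot)$, and the two reductions (nef Cartier to ample, and Weil to Cartier) are the same continuity arguments via Corollary 2.11. The only cosmetic difference is that the paper approximates the nef trace by scaled very ample divisors $t_jA_j$ rather than by the perturbation $C+\tfrac1jH$, which changes nothing of substance.
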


\begin{proof}
We fix a birational model $X_\pi$ over $X$.

{\bf Step1.}
When $W$ is a Cartier $b$-divisor determined on $\tau \geq \pi$ and $W_\tau$ is an $X$-globally generated integral Cartier divisor,
then we have $W = Z(\tau_* \sO_{X_\tau} (W_\tau))$.
Since $\tau_* \sO_{X_\tau}(W_\tau) \subset \pi _* \sO_{X_\pi}(W_\pi) $, we have
\begin{eqnarray*}
W = Z(\tau_* \sO_{X_\tau}(W_\tau))  \leq 
      Z(\pi_* \sO_{X_\pi}(W_\pi) )
\leq \env_\pi (W_\pi).
\end{eqnarray*}

{\bf Step2.}
Suppose $W$ is a Carteir $b$-divisor determined on $\tau \geq \pi$ and $W_\tau$ is an $X$-nef $\R$-Cartier $b$-divisor.
Now there exists a sequence of $X$-very ample Cartier divisors ${A_j}$ on $X_\tau$  
, a sequence of positive real numbers ${t_j}$  
and finite dimensional vector space $V \subset \mathrm{Div(W_\tau)} $ such that
$A_j \in V$ and $t_j A_j \to W_\tau $ as coefficient-wise.
By step1, proposition 2.10, and corollary 2.11,
we have
\begin{eqnarray*}
\nu (W) &=& \lim t_j \nu(\bar{A_j}) \\
&\leq& \lim t_j \nu(\env_{\pi} (\bar{A_j})) \\
&=& \lim \nu (\env_\pi (t_j \bar{A_j}))\\
&=& \nu (\env_\pi (W_\pi)) 
\end{eqnarray*}
for any divisorial valuation $\nu$ over $X$, hence $W \leq \env_\pi(W_\pi)$

{\bf  Step3.}
When $W$ is an arbitrary $X$-nef $\R$-Weil $b$-divisor, 
there exists a net of $X$-nef $\R$-Cartier $b$-divisors {$W_i$} such that 
$W_j \to W$ as coefficient-wise and $W_{j,X}$ is contained in a finite dimensional vector subspace of $\mathrm{Div}_\R (X)$.
By step2 and Corollary 2.11, we have 
\begin{eqnarray*}
\nu (W) 
&=& \nu (\lim W_j)\\
&\leq& \nu (\lim \env_\pi (W_{j,\pi}))\\
&=& \nu (\env_\pi (\lim W_{j,\pi}))\\
&=& \nu (\env_\pi (W_\pi))
\end{eqnarray*}
for any divisorial valuation $\nu$ over $X$, hence $W \leq \env_\pi (W_\pi)$. 
\end{proof}

\begin{rem}
We get the usual negativity lemma by Proposition 2.12.
Indeed, let $D$ be an $X$-nef $\R$-Cartier divisor on $X_\pi$ and $\pi_* D \leq 0$.
Then we have
\begin{eqnarray*}
D 
&\leq& \env_X (\pi_* D)_\pi  \leq 0
\end{eqnarray*}
by applying Proposition 2.12 to $W = \bar{D}$.
\end{rem}

\begin{rem}
Let $\pi$ be a birational model of $X$ and $D$ be an $\R$-Weil divisor on $X_\pi$.
Then $\env_\pi(D)$ is the largest element of the set 
\[
\{ W : X{\rm -nef} \  \R{\rm -Weil} \  b{\rm -divisor} \ | \ W_\pi \leq D \}
\]
Indeed, if $W$ is an element of this set, the negativity lemma means
\[
W \leq \env_\pi(W_\pi) \leq \env_\pi(D).
\]
Therefore it is enough to show that the following inequality
\[
\env_\pi (D)_\pi \leq D.
\]
Let $E$ be an prime divisor on $X_\pi$, then we have
\begin{eqnarray*}
\mathrm{ord}_E (Z(\pi_* \sO_{X_\pi}(mD))) &=& -\mathrm{ord}_E ( \pi_* \sO_{X_\pi} (mD)) \\
&=& -\mathrm{min} \{ \mathrm{ord}_E (f) \  | \  f \in \pi_* \sO_{X_\pi}(mD) \} \\
&\leq& \mathrm{ord}_E (mD),
\end{eqnarray*}
hence we have $\env_{X_\pi}(D)_\pi \leq D$.

\end{rem}

\begin{propdef}[\textup{\cite[Definition 2.16]{BdFF}}]
Let $W$ be an $\R$-Weil $b$-divisor over $X$.
Suppose the set $\{$ $Z$ : $X$-nef $\R$-Weil $b$-divisor \ $ |$ \   $Z \leq W \}$ is not empty.
Then it has the largest element.
We call this \emph{nef envelope of W} and denote $\env_\X (W)$.
\end{propdef}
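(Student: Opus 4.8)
The plan is to exhibit the largest element explicitly as $\env_\X(W):=\lim_{\pi}\env_\pi(W_\pi)$, the coefficient-wise limit of the nef envelopes $\env_\pi(W_\pi)$ of the traces $W_\pi$ of $W$, the limit running over the directed system of birational models $\pi$ over $X$. Throughout, write $S$ for the given set of $X$-nef $\R$-Weil $b$-divisors $Z\le W$; by hypothesis it contains some element $Z_0$, which will serve to bound everything from below. The first step is to show that $\env_\pi(W_\pi)$ is a uniform upper bound for $S$: if $Z\in S$ then $Z_\pi\le W_\pi$, so $\pi_*\sO_{X_\pi}(mZ_\pi)\subseteq\pi_*\sO_{X_\pi}(mW_\pi)$ for all $m$ and hence $\env_\pi(Z_\pi)\le\env_\pi(W_\pi)$, a monotonicity immediate from the definition of the nef envelope (or from Proposition 2.10); combining this with the negativity lemma (Proposition 2.12), which gives $Z\le\env_\pi(Z_\pi)$, we obtain $Z\le\env_\pi(W_\pi)$ for every $Z\in S$ and every $\pi$. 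In particular the net $\{\env_\pi(W_\pi)\}_\pi$ is bounded below by $Z_0$.

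Next I would show that this net is decreasing and converges to a genuine $\R$-Weil $b$-divisor. If $\pi'\ge\pi$ with structure morphism $\mu\colon X_{\pi'}\to X_\pi$, then $\env_{\pi'}(W_{\pi'})$ is an $X$-nef $\R$-Weil $b$-divisor (Example 2.9) whose trace on $X_\pi$ satisfies $\mu_*\bigl(\env_{\pi'}(W_{\pi'})_{\pi'}\bigr)\le\mu_*(W_{\pi'})=W_\pi$, using the inequality $\env_{\pi'}(W_{\pi'})_{\pi'}\le W_{\pi'}$ established right after the negativity lemma together with the compatibility of $W$ under push-forward. By the characterization of $\env_\pi(W_\pi)$ as the largest $X$-nef $\R$-Weil $b$-divisor whose trace on $X_\pi$ is $\le W_\pi$, this forces $\env_{\pi'}(W_{\pi'})\le\env_\pi(W_\pi)$. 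Being decreasing and bounded below by $Z_0$, the net has $\ord_E(\env_\pi(W_\pi))$ converging in $\R$ for every prime divisor $E$ over $X$. Moreover, for each $\pi$ the two inequalities $Z_0\le\env_\pi(W_\pi)$ and $\env_\pi(W_\pi)_\pi\le W_\pi$ pinch the coefficient of every prime $E$ on $X_\pi$ outside the finite set $\Supp((Z_0)_\pi)\cup\Supp(W_\pi)$ to $0$; reading this off for prime divisors on $X$ shows all the traces $\env_\pi(W_\pi)_X$ lie in the fixed finite dimensional space of $\R$-divisors supported on $\Supp((Z_0)_X)\cup\Supp(W_X)$. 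Hence the convergence criterion of Definition 2.5 applies, and $\env_\X(W):=\lim_{\pi}\env_\pi(W_\pi)$ is a well-defined $\R$-Weil $b$-divisor; by the same pinching, now between $(Z_0)_\tau$ and $W_\tau$ (using $\env_\X(W)\le\env_\tau(W_\tau)$), its trace on every model $\tau$ is a genuine, finitely supported divisor.

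It then remains to check that $\env_\X(W)$ lies in $S$ and dominates every element of $S$. Since the net is decreasing, $\env_\X(W)\le\env_\pi(W_\pi)$ for every $\pi$, so $\env_\X(W)_\pi\le\env_\pi(W_\pi)_\pi\le W_\pi$ and therefore $\env_\X(W)\le W$. For $X$-nefness, $\env_\X(W)$ is a limit of the net $\{\env_\pi(W_\pi)\}$ of $X$-nef $\R$-Weil $b$-divisors whose traces on $X$ lie in a single finite dimensional space; combining the defining approximating nets of $X$-nef Cartier $b$-divisors for the individual $\env_\pi(W_\pi)$ into one net indexed by an appropriate product of directed sets, we obtain a net of $X$-nef Cartier $b$-divisors converging to $\env_\X(W)$, so $\env_\X(W)$ is $X$-nef and hence belongs to $S$. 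Finally, the first step gives $Z\le\env_\pi(W_\pi)$ for all $\pi$ and all $Z\in S$, whence $Z\le\env_\X(W)$; so $\env_\X(W)$ is the largest element of $S$, and the hypothesis $S\neq\emptyset$ was used exactly to keep the net bounded below.

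The main obstacle I anticipate is the $X$-nefness of the limit, i.e., the fact that the set of $X$-nef $\R$-Weil $b$-divisors is closed under the convergence of Definition 2.5. This is a soft point, but it requires some care with nets: the challenge is to assemble the approximating nets of $X$-nef Cartier $b$-divisors for all the $\env_\pi(W_\pi)$ into one net converging to $\env_\X(W)$ while keeping its traces on $X$ inside a single fixed finite dimensional vector space. Everything else reduces to the negativity lemma together with elementary bookkeeping of supports and coefficients.
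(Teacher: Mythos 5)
Your proposal is correct and follows essentially the same route as the paper: take the decreasing, bounded-below net $\{\env_\pi(W_\pi)\}_\pi$, pass to the limit, and use the negativity lemma together with the characterization $\env_\pi(D)_\pi\le D$ to see that the limit lies in the set and dominates every element of it. You supply more detail than the paper on two points it glosses over (the finite-dimensionality needed for convergence and the $X$-nefness of the limit), and your monotonicity inequality $\env_{\pi'}(W_{\pi'})\le\env_\pi(W_\pi)$ is the correct one, consistent with the paper's assertion that the net is decreasing.
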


\begin{proof}
By assumption, we get an $X$-nef $b$-divisor $Z \leq W$.
By the negativity lemma and $Z \leq W$, we have 
\[
Z \leq \env_\pi(Z_\pi) \leq \env_\pi (W_\pi)
\]
for any birational model $\pi$.
If $\pi^{'} \geq \pi $, we have $\env_{\pi^{'}}(W_{\pi^{'}}) \geq \env_\pi (W_\pi)$ by definition of nef envelope.
Since the net $\{\env_\pi (W_\pi)\}_{\pi}$ is bounded below and monotonically decreasing,
we can consider the limit $\env_\X (W) = \lim \env_\pi (W_\pi)$ and this satisfies the properties we wanted.
Indeed, only non trivial part is $\env_\X (W) \leq W$, and
it is enough to show that $\env_{X_\pi} (W_\pi)_\pi \leq W_\pi$ for any $\pi$, but it is true by Remark 2.14.
\end{proof}

\section{Intersection numbers of nef $b$-divisors}

In this section, we recall the intersection numbers of nef $b$-divisors defined by \cite{BdFF} and first properties. 
But we also deal the case that $X$ does not have isolated singularities,
therefore Proposition 3.12 is slightly different from \cite[.Proposition 4.16]{BdFF}.

\subsection{The definition of intersection numbers of nef $b$-divisors}

\begin{conv}
In this section, $(R,\m,k)$ is a normal local domain and essentially of finite type over $\C$ 
and $X = \Spec R$.
We denote the unique closed point by $0 \in X$.
\end{conv}

\begin{defn}
Let $W$ be an $\R$-Weil $b$-divisor over $X$.
\begin{enumerate}
\item
$W$ decomposes in the unique way as a sum
\[
W = W^0 + W^{X \backslash 0}
\] 
where all irreducible components of $W^0$ have center $0$, 
and none of $W^{X \backslash 0}$ centered at $0$.

\item
$W$ is a $\R$-Weil $b$-divisor over $0$ if $W = W^0$.

\item
$W$ is bounded below if
there exists $ c \in \R_{>0}$ such that $ W \geq c Z(\m)$

\end{enumerate}
\end{defn}

\begin{rem}
$X$-nef $\R$-Weil $b$-divisors over $0$ are negative by the negativity lemma.
\end{rem}

We say that the following theorem is Izumi's theorem and need the analogous statement Theorem 3.5.

\begin{thm}[\textup{\cite{Izu}}]
Let $\nu$ and $\nu '$ be divisorial valuations on $X$ centered at $0$.
Then there exists $c > 0$ such that 
\[
c^{-1} \nu (f) \geq \nu' (f) \geq c \nu(f)
\]
for any $f \in R$.
\end{thm}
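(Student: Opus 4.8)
The statement is the theorem of Izumi \cite{Izu}; the plan for a direct proof runs as follows. First I would reduce to a comparison with one fixed valuation: it is enough to produce, for every divisorial valuation $\nu$ centered at $0$, constants $0<\alpha_\nu\le\beta_\nu$ with $\alpha_\nu\,\ord_\m(f)\le\nu(f)\le\beta_\nu\,\ord_\m(f)$ for all $f\in R$, where $\ord_\m(f)=\max\{n:f\in\m^n\}$; the asserted two-sided inequality for $\nu,\nu'$ then follows by eliminating $\ord_\m$, with $c=\min\{\alpha_{\nu'}/\beta_\nu,\ \alpha_\nu/\beta_{\nu'}\}$. Since $R$ is excellent, Rees's theorem on the finiteness of the integral closure of the Rees algebra gives an integer $k$ with $\overline{\m^{n+k}}\subseteq\m^n$ for all $n$, so $\ord_{\overline\m}-k\le\ord_\m\le\ord_{\overline\m}$ for the integral closure order $\ord_{\overline\m}(f)=\max\{n:f\in\overline{\m^n}\}$; hence I may freely compare $\nu$ with $\ord_{\overline\m}$ instead.

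The lower bound is immediate: if $f\in\m^n$ then $f$ is an $R$-linear combination of $n$-fold products of elements of $\m$, so $\nu(f)\ge n\cdot\nu(\m)$ with $\nu(\m)=\min_{g\in\m}\nu(g)>0$, giving $\nu(f)\ge\nu(\m)\,\ord_\m(f)$, i.e.\ $\alpha_\nu=\nu(\m)$ works (and similarly for $\nu'$). The real content is the upper bound $\nu(f)\le\beta_\nu\,\ord_{\overline\m}(f)$, i.e.\ that $\nu$ cannot grow faster than the $\m$-adic order. For this I would fix a proper birational model $\pi\colon Y\to X$ with $Y$ regular, dominating the normalized blow-up of $\m$, with $\pi^{-1}(0)=\bigcup_i E_i$ simple normal crossing and $\nu=q\cdot\ord_E$ for one component $E$. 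Writing $\m\sO_Y=\sO_Y(-F)$ with $F=\sum_i a_i E_i$, $a_i\ge1$, the valuative criterion for integral closure gives $\ord_{\overline\m}(f)=\lfloor\min_i\ord_{E_i}(f)/a_i\rfloor$ for $f\in R$, so it suffices to bound $\ord_E(f)$ above by a uniform multiple of $\ord_{E_i}(f)$ for each $i$; and since $\pi^{-1}(0)$ is connected ($R$ normal), by chaining along its dual graph it is enough to treat two adjacent components $E,E'$.

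Here I would use the projectivity of $\pi^{-1}(0)$ over $k$. For $f\in R$ one has $\Div_Y(f)=\sum_i\ord_{E_i}(f)\,E_i+H$ with $H\ge0$ non-exceptional, and $\Div_Y(f)\cdot C=0$ for every complete curve $C\subseteq\pi^{-1}(0)$, since $\sO_Y(\Div_Y f)\cong\sO_Y$. When $\dim X=2$ this says $\sum_i\ord_{E_i}(f)\,(E_i\cdot E_j)=-H\cdot E_j\le0$ for every $j$, so with $M=(E_i\cdot E_j)$ negative definite (Mumford) one gets $(\ord_{E_i}(f))_i=(-M)^{-1}(H\cdot E_j)_j$, and $(-M)^{-1}$ has strictly positive entries; comparing the $E$- and $E'$-coordinates yields $\ord_E(f)\le c\,\ord_{E'}(f)$ with $c$ depending only on $M$, hence only on $Y$. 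For $\dim X>2$ I would cut $Y$ by $\dim X-2$ general $\pi$-very-ample divisors through a general point of $E\cap E'$, so as to reduce to the surface case applied to a resolution of a general hyperplane section of $X$.

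The step I expect to be the real difficulty is exactly this last reduction to surfaces: the general complete intersection surface must be chosen transverse to all of the $E_i$ while still computing $\ord_{E_i}(f)$ correctly for \emph{every} $f\in R$ simultaneously, so the genericity has to be arranged uniformly in $f$; this is the delicate point, and the robust reference for it is \cite{Izu}. Granting it, combining the two bounds gives $\alpha_\nu\,\ord_\m\le\nu\le\beta_\nu\,\ord_\m$ on $R$ for every divisorial $\nu$ centered at $0$, and applying this to both $\nu$ and $\nu'$ yields the stated inequality.
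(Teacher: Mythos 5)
This statement is not proved in the paper at all: it is quoted verbatim from Izumi's paper \cite{Izu} and used as a black box (the paper's own contribution is the analogous statement for nef $b$-divisors, Theorem 3.5, which it deduces from this one). So there is no in-paper argument to compare your proposal against; the only question is whether your sketch would stand on its own. The easy half does: the reduction to a two-sided comparison with $\ord_{\m}$, the lower bound $\nu(f)\ge \nu(\m)\,\ord_{\m}(f)$, and the surface case are all correct. In dimension two your argument is essentially the standard proof: on a resolution the vector $\bigl(\ord_{E_i}(f)\bigr)_i$ equals $(-M)^{-1}$ applied to a non-negative vector, and since the exceptional fibre is connected (Zariski's main theorem, $R$ normal) and $M$ is negative definite with non-negative off-diagonal entries, $(-M)^{-1}$ is entrywise strictly positive, which gives the uniform two-sided comparison of all the $\ord_{E_i}$ at once (no chaining along the dual graph is even needed).

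The genuine gap is exactly where you flag it: the reduction of $\dim X\ge 3$ to the surface case by cutting with general $\pi$-very ample divisors does not work as stated, and this is not merely a matter of arranging genericity more carefully. The constant produced by the surface argument depends on the intersection matrix of a resolution of the cut surface $S$, and as $f$ varies the surface $S_f$ that correctly computes every $\ord_{E_i}(f)$ must vary; one would need simultaneously (i) that each $E_i\cap S_f$ is a prime divisor on $S_f$ computing $\ord_{E_i}(f)$, (ii) that $S_f$ is (a resolution of) a normal surface so that Mumford's theory applies, and (iii) that the resulting intersection matrices range over a set on which the entries of $(-M)^{-1}$ are uniformly bounded above and below. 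None of these is established, and (iii) in particular has no reason to hold for a varying family of hyperplane sections. The known proofs of Izumi's theorem in higher dimension take entirely different routes (Izumi's original analytic argument; Rees's valuation-theoretic proof for analytically unramified local rings; the algebro-geometric proofs via multiplier ideals and subadditivity). Since your sketch ultimately defers this step to \cite{Izu}, it amounts to the same citation the paper makes; as a self-contained proof it is incomplete at precisely the step where the theorem is deep.
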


\begin{thm}
Let $\nu$ and $\nu'$ be divisorial valuation centered at $0$.
Then there exists $c >0$ such that
\[
c^{-1} \nu(W) \geq \nu' (W) \geq c \nu(W)
\]
for any $X$-nef $\R$-Weil $b$-divisor over X such that $W \leq 0$.
\end{thm}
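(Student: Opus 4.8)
The plan is to bootstrap from the classical Izumi theorem (Theorem 3.4) using the approximation of $X$-nef $b$-divisors by nef Cartier $b$-divisors, together with the description of $Z(\mathfrak{a})$ in terms of valuations. First I would reduce to the case where $W$ is a nef Cartier $b$-divisor of the form $W = Z(\mathfrak{a})$ for a fractional ideal $\mathfrak{a} \subseteq \sO_X$: by Example 2.9(1), every $X$-nef $\R$-Cartier $b$-divisor determined on a model $X_\tau$ by an $X$-globally generated integral divisor is of this shape (as in Step 1 of the proof of Proposition 2.12), and after scaling and taking $\R$-linear combinations the general $X$-nef $\R$-Cartier $b$-divisor with $W \le 0$ is a limit of such, with traces on $X$ lying in a fixed finite-dimensional subspace of $\mathrm{Div}_\R(X)$. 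By Corollary 2.11 (or directly Definition 2.5), $\nu(W)$ and $\nu'(W)$ are then limits of the corresponding quantities for the Cartier approximants, so it suffices to establish the two-sided bound with a uniform constant $c$ on the Cartier approximants, and then pass to the limit.

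Next, for $W = Z(\mathfrak{a})$ with $\mathfrak{a} \subseteq \sO_X$ a fractional ideal, I would use the formula $\ord_E(Z(\mathfrak{a})) = -\ord_E(\mathfrak{a}) = -\min\{\ord_E(f) : f \in \mathfrak{a}\}$ that already appears in Remark 2.14. Thus $\nu(Z(\mathfrak{a})) = -\min_{f \in \mathfrak{a}} \nu(f)$, and the claimed inequality $c^{-1}\nu(W) \ge \nu'(W) \ge c\,\nu(W)$ (note all three quantities are $\le 0$) becomes, after negating,
\[
c^{-1} \min_{f \in \mathfrak{a}} \nu(f) \le \min_{f \in \mathfrak{a}} \nu'(f) \le c \min_{f \in \mathfrak{a}} \nu(f),
\]
which follows immediately from Theorem 3.4 applied termwise: if $c$ is the Izumi constant for the pair $(\nu, \nu')$, then $c\,\nu(f) \le \nu'(f) \le c^{-1}\nu(f)$ for every $f \in R$, hence taking minima over $f \in \mathfrak{a}$ preserves the inequalities. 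The crucial point is that the Izumi constant $c$ depends only on $\nu$ and $\nu'$, not on $\mathfrak{a}$, so it is uniform across all Cartier approximants simultaneously — this is what makes the limiting argument in the previous paragraph legitimate.

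The main obstacle I anticipate is handling the limit cleanly when $W$ is a general $X$-nef $\R$-Weil $b$-divisor rather than a Cartier one: one must ensure that the approximating net $\{W_j\}$ can be chosen with $W_j \le 0$ as well (or at least uniformly bounded above by something negligible), so that the uniform Izumi bound genuinely survives passage to the limit, and one must invoke the convergence definition (Definition 2.6) to get $\nu(W_j) \to \nu(W)$ and $\nu'(W_j) \to \nu'(W)$. A technical wrinkle is that the standard approximation gives $W_j \to W$ coefficient-wise but not necessarily with $W_j \le 0$; I would address this by instead approximating $-W \ge 0$ from below by effective $X$-nef Cartier $b$-divisors, or equivalently by noting that since $W \le 0$ we may replace each $W_j$ by $\min(W_j, 0)$ on the relevant model without destroying $X$-nefness of the relevant traces after a further model change — the precise bookkeeping here, using Remark 3.3 that $X$-nef $b$-divisors over $0$ are negative, is where the care is needed. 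Once the sign issue is resolved, the rest is a routine passage to the limit using Corollary 2.11.
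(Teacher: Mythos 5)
Your core computation is exactly the paper's: reduce to $W=\frac1m Z(\fa)$ with $\fa\subseteq\sO_X$, use $\nu(Z(\fa))=-\min_{f\in\fa}\nu(f)$, and apply Izumi termwise, the key point being that the Izumi constant depends only on $(\nu,\nu')$ and not on $\fa$. The problem is the reduction step, and the obstacle you flag at the end is a genuine gap as you have left it: the net of nef Cartier $b$-divisors furnished by Definition 2.8 need not satisfy $W_j\le 0$, and neither of your proposed repairs works as stated. Truncating $W_j$ to $\min(W_j,0)$ destroys nefness of the traces (the minimum of a nef divisor with $0$ is not nef in general, on any model), and approximating $-W\ge 0$ from below by \emph{effective} nef Cartier $b$-divisors is not something any of the quoted results provide. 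Since the two-sided inequality you need is sign-sensitive (all three quantities must be $\le 0$ for the chain $c^{-1}\nu(W)\ge\nu'(W)\ge c\,\nu(W)$ to be the correct form of Izumi), you cannot pass to the limit without controlling the sign of the approximants.

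The paper closes this by using a different, monotone approximation that is already available to you: since $W$ is $X$-nef, $W=\env_\X(W)=\inf_\pi\env_\pi(W_\pi)$ (Proposition--Definition 2.15), and each $\env_\pi(W_\pi)\le 0$ by the negativity lemma because $W_\pi\le 0$; moreover $\env_\pi(W_\pi)=\sup_m\frac1m Z(\pi_*\sO_{X_\pi}(mW_\pi))$ with $\pi_*\sO_{X_\pi}(mW_\pi)\subseteq\sO_X$. Thus $W$ is an infimum of suprema of $b$-divisors of exactly the form $\frac1m Z(\fa)$, $\fa\subseteq\sO_X$, all of which satisfy the uniform two-sided Izumi bound; and such a bound is preserved under pointwise infima and suprema. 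Replace your Cartier-net approximation by this one and your argument is complete and coincides with the paper's.
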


\begin{proof}
We have 
\[
W=\env_{\X} (W)=\inf \env_\pi (W_\pi) 
\]
 and
$\env_\pi (W_\pi) \leq 0$ by the negativity lemma.
Therefore we can reduce to $W= \env_\pi (W_\pi)$,
and since $\env_\pi (W_\pi) = \sup \frac{1}{m} Z(\pi_* \sO_{X_\pi} (mW_\pi)) \leq 0$, 
we may assume $W= \frac{1}{m} Z(\pi_* \sO_{X_\pi} (mW_\pi))$ and $\pi_* \sO_{X_\pi} (mW_\pi) \subset \sO_X$.
Thus the theorem follows from the Izumi's theorem.
\end{proof}

\begin{cor}
Let $W$ be an $X$-nef $\R$-Weil $b$-divisor over $X$ such that $W \leq 0$, and
assume that there exists a divisorial valuation $\nu_0$ centered at $0$ such that $\nu_0 (W) \neq 0$.
Then there exists $\epsilon >0$ such that
\[
W \leq \epsilon \cdot Z(\m)
\]
\end{cor}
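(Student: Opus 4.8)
The plan is to combine the Izumi-type comparison in Theorem 3.5 with the hypothesis that a single divisorial valuation sees $W$ nontrivially, and then to bootstrap from a single valuation to a uniform bound $W \le \epsilon \, Z(\m)$.

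First I would reduce to producing the inequality valuation-by-valuation. Recall that $Z(\m)$ is the $X$-nef Cartier $b$-divisor cut out on the normalized blow-up of $\m$, and it is negative: $Z(\m) \le 0$, with $\ord_E(Z(\m)) < 0$ for at least one (indeed every) prime divisor $E$ centered at $0$. So it suffices to find $\epsilon > 0$ with $\ord_E(W) \le \epsilon \, \ord_E(Z(\m))$ for every prime divisor $E$ over $X$. For prime divisors $E$ \emph{not} centered at $0$, both sides involve only the non-exceptional-at-$0$ part: $\ord_E(Z(\m)) = 0$ while $\ord_E(W) \le 0$ since $W \le 0$, so the inequality $\ord_E(W) \le 0 = \epsilon\,\ord_E(Z(\m))$ holds for any $\epsilon > 0$. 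Hence the only content is at prime divisors $E$ centered at $0$.

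Next, for $E$ centered at $0$, I would apply Theorem 3.5 twice. Fix $\nu_0$ with $\nu_0(W) \ne 0$; since $W \le 0$ and $\nu_0$ is centered at $0$, Remark 3.2 (or directly the negativity lemma) forces $\nu_0(W) < 0$. Let $\nu_E = \ord_E$ for an arbitrary prime divisor $E$ centered at $0$. Applying Theorem 3.5 to the pair $(\nu_E, \nu_0)$ and the $b$-divisor $W \le 0$ gives a constant $c_1 = c_1(E) > 0$ with $\nu_E(W) \le c_1 \nu_0(W)$ (this is the half of the two-sided estimate that says $\nu_E(W)$ cannot be too far above $\nu_0(W)$ once both are $\le 0$; concretely $\nu_0(W) \ge c \,\nu_E(W)$, i.e. $\nu_E(W) \le c^{-1}\nu_0(W)$). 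Similarly, applying Theorem 3.5 to $(\nu_E, \nu_0)$ for the $b$-divisor $Z(\m) \le 0$ gives $c_2 = c_2(E) > 0$ with $\nu_0(Z(\m)) \le c_2^{-1} \nu_E(Z(\m))$, i.e. $\nu_E(Z(\m)) \le c_2 \, \nu_0(Z(\m)) < 0$. Setting $\epsilon_0 := -\nu_0(W) / \bigl(-\nu_0(Z(\m))\bigr) > 0$ and combining, $\nu_E(W) \le c_1 \nu_0(W) = -c_1 \epsilon_0 \bigl(-\nu_0(Z(\m))\bigr)$, which we want $\le \epsilon \, \nu_E(Z(\m)) \le \epsilon c_2 \nu_0(Z(\m)) = -\epsilon c_2 \bigl(-\nu_0(Z(\m))\bigr)$. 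The main obstacle is exactly here: the constants $c_1(E), c_2(E)$ coming from Izumi's theorem a priori depend on $E$, so a naive choice of $\epsilon$ depending on $E$ does not produce the single uniform $\epsilon$ demanded by the statement.

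To overcome this I would pass to a fixed reference valuation — say take $\nu_0$ to be $\ord_{E_0}$ for the exceptional divisor $E_0$ of the normalized blow-up of $\m$, so that $\nu_0(Z(\m)) = -1$ — and then phrase the whole estimate in terms of $Z(\m)$ as the comparison object rather than chaining through two separate Izumi constants. The clean route is: for $W \le 0$ with $\nu_0(W) = -a < 0$, note $W$ and the $X$-nef $b$-divisor $a \cdot Z(\m)$ agree at $\nu_0$; I want $W \le \epsilon Z(\m)$, equivalently $\ord_E(W)/\ord_E(Z(\m)) \ge \epsilon$ is \emph{not} needed — rather $\ord_E(W) \le \epsilon \ord_E(Z(\m))$, i.e. since $\ord_E(Z(\m)) \le 0$ we need $\ord_E(W)$ bounded above by a fixed negative multiple of $\ord_E(Z(\m))$. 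This is precisely the statement that the ratio $\ord_E(W)/\ord_E(Z(\m))$ is bounded below by a positive constant uniformly in $E$ (centered at $0$), which follows from Theorem 3.5 applied to the single $b$-divisor $W$ against the single $b$-divisor $Z(\m)$: indeed one may run the proof of Theorem 3.5 verbatim replacing one of the two valuations' roles by the Cartier $b$-divisor $Z(\m)$, or more simply invoke that for the graded sequence representing $\env_\pi(W_\pi)$ the reduction in the proof of Theorem 3.5 lands us at $W = \tfrac1m Z(\pi_*\sO_{X_\pi}(mW_\pi))$ with $\pi_*\sO_{X_\pi}(mW_\pi) \subset \sO_X$, and Izumi gives a uniform comparison of $\ord_E$ of any fixed nonzero ideal with $\ord_E(\m)$. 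Taking $\epsilon$ to be that uniform lower bound (scaled by $a$) finishes the proof. I expect writing down this uniformity carefully — isolating the constant so that it genuinely does not depend on $E$ — to be the only delicate point; everything else is bookkeeping with the negativity lemma and Theorem 3.5.
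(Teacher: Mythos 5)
Your write-up correctly isolates the difficulty — the Izumi constants of Theorem 3.5 depend on the pair of valuations, so a valuation-by-valuation comparison does not by itself yield a single $\epsilon$ — but it does not actually resolve it. The resolution you gesture at is a \emph{uniform} Izumi inequality: a single constant $c>0$ such that $\ord_E(g)/\ord_E(\m) \ge c\,\nu_0(g)/\nu_0(\m)$ for \emph{all} prime divisors $E$ centered at $0$ simultaneously (or, in your ideal-theoretic phrasing, a bound on $\ord_E(\fa)$ against $\ord_E(\m)$ uniform in $E$ for a fixed ideal $\fa$). That is a genuinely stronger statement than Theorem 3.4/3.5, which fix the two valuations and let $f$ vary; "running the proof of Theorem 3.5 verbatim" does not produce it, because that proof invokes Izumi for one fixed pair $(\nu,\nu')$. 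The uniform version is true for excellent normal local rings (it is essentially Rees's linear comparison of $\overline{\m^k}$ with $\m^k$ together with the valuative criterion for integral closure), but it is not proved in the paper and not proved by you; as written, the step you yourself flag as "the only delicate point" is exactly where the proof is missing.

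The paper avoids the uniformity problem entirely by working on a single model. Let $\pi$ be the normalized blow-up of $\m$. Theorem 3.5 is used only to upgrade $\nu_0(W)\neq 0$ to $\ord_F(W)<0$ for each of the \emph{finitely many} exceptional primes $F$ of $X_\pi$ centered at $0$; since $Z(\m)_\pi$ also has strictly negative coefficient along each such $F$ and vanishes along divisors not centered at $0$ (where $W_\pi\le 0$ anyway), one gets $W_\pi \le \epsilon\, Z(\m)_\pi$ for some $\epsilon>0$ by taking a minimum over finitely many ratios. The negativity lemma (Proposition 2.12) then gives
\[
W \le \env_\pi(W_\pi) \le \epsilon\,\env_\pi(Z(\m)_\pi) = \epsilon\, Z(\m),
\]
using monotonicity and homogeneity of $\env_\pi$ and the fact that $Z(\m)$ is the Cartier $b$-divisor determined on $X_\pi$. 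In other words, the uniformity over all $E$ is supplied by the negativity lemma applied to a trace on one model, not by a uniform Izumi estimate. You should replace the second half of your argument with this reduction; the first half (reducing to divisors centered at $0$, and deducing $\nu(W)<0$ for all such $\nu$ from Theorem 3.5) is fine and coincides with the paper's first step.
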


\begin{proof}
By Theorem 3.5, we have $\nu (W) <0$ for all divisorial valuation $\nu$ centered at $0$.
Let $\pi$ be the normalized blow-up at $\m$.
Then there exists $\epsilon >0 $ such that $W_\pi \leq \epsilon \cdot Z(\m)_\pi$.
By the negativity lemma, we have
\[
W \leq \env_\pi (W_\pi) \leq \epsilon \env_\pi (Z(\m)_\pi) = \epsilon Z(\m)
\]
\end{proof}

\begin{defn}[\textup {cf. \cite[Definition 4.13]{BdFF}}]
Let $W_1 , \dots , W_n $ be $X$-nef $\R$-Weil $b$-divisors over $0$.
We set
\[
W_1 \cdot \ldots \cdot W_n = \inf_{C_i \geq W_i} (C_1 \cdot \ldots \cdot C_n)
\]
where the infimum is taken over all $X$-nef $\R$-Cartier $b$-divisors over $0$
such that $C_i \geq W_i$.
\end{defn}

\begin{prop}
Let $W_1 , \dots , W_n $ and $W'_1, \dots , W'_n$ be $X$-nef $\R$-Weil $b$-divisors over $0$.

\begin{enumerate}
\item
If $W_i \leq W'_i$ for all $i=1 , \dots , n$, then we have 
$W_1 \cdot \ldots \cdot W_n \leq W'_1 \cdot \ldots \cdot W'_n$.

\item
If $W_1 , \dots , W_n$ are bounded below, then we have 
$W_1 \cdot \ldots \cdot W_n > -\infty$.

\item
$W_1 \cdot \ldots \cdot W_n = 0 \Rightarrow W_1 = \cdots = W_n = 0$

\end{enumerate}
\end{prop}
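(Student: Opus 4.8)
The plan is to reduce all three statements to facts about intersection numbers of $X$-nef $\R$-Cartier $b$-divisors over $0$, which are the finite-model intersection numbers we already understand, and then use the negativity lemma (Proposition 2.12) together with Izumi's theorem in the $b$-divisor form (Theorem 3.5) to control the limits. For (1), the key observation is monotonicity at the Cartier level: if $W_i \le W_i'$ then any $X$-nef $\R$-Cartier $b$-divisor $C_i'$ over $0$ with $C_i' \ge W_i'$ also satisfies $C_i' \ge W_i$, so the infimum defining $W_1\cdot\ldots\cdot W_n$ is taken over a larger family than the one defining $W_1'\cdot\ldots\cdot W_n'$; hence $W_1\cdot\ldots\cdot W_n \le W_1'\cdot\ldots\cdot W_n'$. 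One has to also know that intersection numbers of $X$-nef $\R$-Cartier $b$-divisors over $0$ are themselves monotone, but this is the classical fact on a common model where all the $C_i$ are determined, combined with the negativity lemma to pass between models, so it is routine.

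For (2), I would use boundedness below: $W_i \ge c_i Z(\m)$ for some $c_i>0$. Then by (1) it suffices to bound $Z(\m)\cdot\ldots\cdot Z(\m)$ from below, i.e.\ to show $Z(\m)^{\cdot n} > -\infty$. But $Z(\m)$ is an $X$-nef $\R$-Cartier $b$-divisor (Example 2.4(2) and Example 2.9(1)), so it lies in its own defining family, giving $Z(\m)^{\cdot n} = (Z(\m)_\pi)^n$ computed on the normalized blow-up $\pi$ of $\m$, which is a finite negative number. Here one should be slightly careful that the infimum in Definition 3.9 over $C_i \ge Z(\m)$ does not drop below $(Z(\m)_\pi)^n$; this again follows from monotonicity of Cartier intersection numbers together with $Z(\m) \le C_i$, so each term in the infimum is $\ge (Z(\m)_\pi)^n$ by the reasoning used for part (1).

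For (3), assume $W_1\cdot\ldots\cdot W_n = 0$ and suppose for contradiction some $W_j \ne 0$; since $W_j$ is an $X$-nef $\R$-Weil $b$-divisor over $0$ it is $\le 0$ (Remark 3.3), and $W_j \ne 0$ means there is a divisorial valuation $\nu_0$ centered at $0$ with $\nu_0(W_j) \ne 0$. By Corollary 3.8 there is $\epsilon > 0$ with $W_j \le \epsilon\, Z(\m) < 0$. Relabel so $j = 1$. Now I want to show $W_1\cdot W_2\cdot\ldots\cdot W_n < 0$, which contradicts the hypothesis. By (1) it is enough to produce a strictly negative lower bound, and the natural route is: first handle the all-Cartier case, where on a common model the product $C_1 \cdot \ldots \cdot C_n$ with $0 \neq C_1 \le 0$ and each $C_i \le 0$ nef is a sum of nonnegative contributions one of which is strictly positive (using that $C_1$ is nonzero nef and the $C_i$ are nef and negative — here the global generation and the structure of $X$-nef divisors on the model, plus Izumi to see $\nu(C_1) < 0$ for \emph{all} $\nu$ centered at $0$, keeps the product away from $0$); then pass to the general $\R$-Weil case by the infimum definition, using that every competitor $C_i \ge W_i$ still has $C_1 \le \epsilon Z(\m)$-type negativity inherited through the Izumi comparison of Theorem 3.5.

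The main obstacle I expect is part (3), specifically showing that the infimum over Cartier competitors of a product that is ``morally'' strictly negative does not converge to $0$. The danger is that as the $C_i$ approximate the $W_i$ from above, the $C_i$ themselves might degenerate in a way that lets $C_1\cdot\ldots\cdot C_n \to 0$. The way around this is exactly Theorem 3.5 (the $b$-divisor Izumi theorem): it forces a two-sided comparison $\nu'(W) \asymp \nu(W)$ uniformly over all $X$-nef $W \le 0$, so the strict negativity of $W_1$ at one valuation propagates to a uniform strict negativity (e.g.\ $W_1 \le \epsilon Z(\m)$ from Corollary 3.8), and this survives in every Cartier competitor because $C_1 \ge W_1$ does \emph{not} destroy the lower Izumi bound that is being used on the product, only the upper one. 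Making this last point precise — tracking which inequalities are needed in which direction — is the only delicate bookkeeping in the argument.
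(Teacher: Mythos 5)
Your arguments for (1) and (2) are exactly the paper's: (1) is the observation that the competitors for the $W_i'$ form a subfamily of the competitors for the $W_i$, and (2) follows from (1) applied to $W_i \geq c\,Z(\m)$ together with the finiteness of $Z(\m)^n$. No issues there.

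Part (3) is where you go wrong, and the difficulty you flag as ``the main obstacle'' is a symptom of attacking a statement that is not actually provable in the form you set up. You take the contrapositive literally: assume only that \emph{one} factor $W_j$ is nonzero, dominate it by $\epsilon Z(\m)$ via Corollary 3.6, and then try to show the product is strictly negative. But if some other factor $W_i$ is zero, its only nef Cartier competitor over $0$ is $C_i = 0$ (negativity forces $C_i \leq 0$, and $C_i \geq W_i = 0$), so every product in the infimum vanishes and the intersection number is $0$ no matter what $W_j$ does. So no amount of Izumi-type bookkeeping will make your single-factor argument work; the uniform strict negativity of $C_1\cdot\ldots\cdot C_n$ over all competitors simply fails when another factor degenerates. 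The paper's proof avoids this entirely by proving the contrapositive in the only form that is true and that is used later (in the diagonal case of Proposition 4.8): assume \emph{all} $W_i \neq 0$. Then Corollary 3.6 applies to each factor (each $W_i$ is supported over $0$, so $W_i \neq 0$ gives a divisorial valuation centered at $0$ on which it is nonzero), and after taking $\epsilon$ to be the minimum of the resulting constants one has $W_i \leq \epsilon Z(\m)$ for every $i$. Now part (1) does all the work: $W_1\cdot\ldots\cdot W_n \leq (\epsilon Z(\m))^n = \epsilon^n Z(\m)^n < 0$, where $Z(\m)^n$ is computed on the normalized blow-up of $\m$ and is strictly negative. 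There is no delicate limiting argument and no need to track which direction of the Izumi comparison survives passage to competitors; once every factor is bounded above by a fixed multiple of $Z(\m)$, monotonicity alone closes the argument.
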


\begin{proof}
We have
\[
W_1 \cdot \ldots \cdot W_n \leq C_1 \cdot \ldots \cdot C_n
\]
for all $X$-nef $\R$-Cartier $b$-divisors over $0$ such that $C_i \geq W'_i$
by  the definition of intersection numbers of $b$-divisor.
Thus we have the statement (1).

(2) is clear by the definition of bounded below and (1).

If $W_i \neq 0 $ for all $i$, 
we can find $\epsilon > 0$ such that $W_i \leq \epsilon \cdot Z(\m)$
by using corollary 3.6.
Therefore we have 
\[
W_1 \cdot \ldots \cdot W_n \leq \epsilon^n Z(\m)^n < 0.
\]
\end{proof}

\subsection{Properties of intersection numbers under finite morphisms}
In this subsection, we discuss the properties of intersection numbers under finite morphisms
to consider endomorphisms.

\begin{conv}
In this subsection, $X$ is as in convention 3.1.
and $(S,\n,l)$ is also a normal local domain and essentially of finite type over $l$ 
and $Y = \Spec S$.
We denote also the unique closed point by  $0 \in Y$.
Let $f  : (Y,0) \to (X,0) $ be finite surjective morphism satisfying $f^{-1} (0) = {0}$ 
and $\tilde{d} = \displaystyle{\frac{\mathrm{deg}(f)}{[l : k]} }$ where $[l : k]$ is the index of the field extension of the residue fields.
Now, $f$ induces linearly maps $f_* : \mathrm{Div}(\Y) \to \mathrm{Div}(\X)$ and
$f^* : \mathrm{Div}(\X) \to \mathrm{Div}(\Y)$.
\end{conv}

\begin{prop}
\ 
\begin{enumerate}

\item
Let $W$ be an integral Cartier $b$-divisor over $X$ determined by a birational model $\pi$.
Then $W_\pi$ is $X$-globally generated if and only if 
there exists a fractional ideal sheaf $\fa$ of $X$ such that $Z(\fa) = W$.

In particular the Cartier divisors $Z(\fa)$ with $\fa$ ranging over all ideal sheaves of $X$ 
generates $\mathrm{CDiv(\X)}$ as a group.

\item
If $W$ is an $\R$-Cartier $b$-divisor over $Y$, then $f_* W $ is also a $\R$-Cartier $b$-divisor.

\item
If $W$ is a $Y$-nef $\R$-Weil $b$-divisor , $f_* W $ is an $X$-nef $\R$-Weil $b$-divisor.
\end{enumerate}
\end{prop}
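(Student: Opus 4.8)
The three parts have increasing difficulty, and the plan is to reduce each of them to the ``ideal-sheaf'' $b$-divisors $Z(\fa)$, for which $f_*$ can be read off on an explicit model. For (1), the only substantial implication is: if $W$ is determined on $\pi$ and $W_\pi$ is $X$-globally generated, then $W = Z(\fa)$ for some $\fa$. I would take $\fa := \pi_*\sO_{X_\pi}(W_\pi)$, a coherent fractional ideal sheaf on $X$; global generation over $X$ says exactly that $\fa\cdot\sO_{X_\pi} = \sO_{X_\pi}(W_\pi)$ is invertible, so $X_\pi$ dominates the normalized blow-up $X_\sigma$ of $\fa$, and pulling back $\fa\cdot\sO_{X_\sigma} = \sO_{X_\sigma}(Z(\fa)_\sigma)$ to $X_\pi$ gives $Z(\fa)_\pi = W_\pi$; as $Z(\fa)$ and $W$ are determined on $X_\pi$ with equal trace there, $Z(\fa) = W$. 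The converse is immediate, since on a model dominating $X_\sigma$ the trace of $Z(\fa)$ is the pullback of the $X$-globally generated divisor $Z(\fa)_\sigma$. For the ``in particular'' statement: given an integral Cartier $b$-divisor determined on $\pi$, choose an $X$-very ample $B$ on $X_\pi$, so that $W_\pi + nB$ and $nB$ are $X$-globally generated for $n\gg0$ and hence $W = Z(\fa_1) - Z(\fa_2)$ with $\fa_i$ fractional; writing $\fa_i = g_i^{-1}\fb_i$ with $\fb_i$ and $g_i\sO_X$ honest ideal sheaves (possible since $X=\Spec R$ is affine) and using $Z(g^{-1}\fb) = Z(\fb) - Z(g\sO_X)$ completes it.

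For (2), by (1) and $\R$-linearity it suffices to prove $f_*Z(\fa)$ is $\R$-Cartier for an ideal sheaf $\fa\subseteq\sO_Y$. I would fix a birational model $X_\pi$ over $X$ that is \emph{smooth} and large enough that the normalization $Y_{\pi'}$ of $Y\times_X X_\pi$ dominates the normalized blow-up $Y_\sigma$ of $\fa$; such $X_\pi$ exists, by first passing to a model $X_{\pi_0}$ over which $\fa$ becomes invertible after the finite base change (cofinality of normalized fibre products among birational models of $Y$) and then resolving $X_{\pi_0}$. On $Y_{\pi'}$ the trace of $Z(\fa)$ is the Cartier pullback of $Z(\fa)_\sigma$, so $(f_*Z(\fa))_\pi = g_{\pi*}(Z(\fa)_{\pi'})$ for the induced finite morphism $g_\pi\colon Y_{\pi'}\to X_\pi$, and this push-forward cycle is Cartier because $X_\pi$ is smooth. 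One then checks that $f_*Z(\fa)$ is determined on $X_\pi$: for $\pi''\ge\pi$, enlarged to a smooth model, the square $Y_{\pi''}'\to Y_{\pi'}$, $X_{\pi''}\to X_\pi$ is Cartesian up to normalization, and the projection formula for the Cartier divisor $Z(\fa)_{\pi'}$ shows $(f_*Z(\fa))_{\pi''}$ is the pullback of $(f_*Z(\fa))_\pi$. This step is where I expect the difficulty: it is not true that the push-forward of a Cartier divisor under a finite morphism of normal varieties is $\Q$-Cartier on a prescribed model, so one really needs the interplay of the cofinality statement, the passage to a resolution, and the projection-formula compatibility.

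For (3), write $W = \lim_i W_i$ with each $W_i$ an $X$-nef $\R$-Cartier $b$-divisor over $Y$, the traces $W_{i,Y}$ in a fixed finite-dimensional subspace $V$, and $\ord_E(W_i)\to\ord_E(W)$ for every prime divisor $E$ over $X$. By (2) each $f_*W_i$ is $\R$-Cartier over $X$, and on the smooth model $X_{\pi_i}$ from (2), taken above the model where $W_i$ is determined, its trace is $g_{\pi_i*}$ of a pullback of the $X$-nef divisor $(W_i)_{\tau_i}$; pulling curves contracted over $X$ back along the finite surjection $g_{\pi_i}$ and using the projection formula shows this push-forward is again $X$-nef, so $f_*W_i$ is an $X$-nef $\R$-Cartier $b$-divisor. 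Finally $f_*W = \lim_i f_*W_i$: the trace on $X = \Spec R$ itself is $f_*(W_{i,Y})\in f_*V$, which is finite-dimensional, and for a prime divisor $E$ over $X$, inspecting the finitely many preimages $E_1,\dots,E_r$ of $E$ on the relevant normalized fibre product shows that $\ord_E(f_*W_i)$ is a fixed linear combination of $\ord_{E_1}(W_i),\dots,\ord_{E_r}(W_i)$, hence converges to the corresponding combination for $W$, i.e.\ to $\ord_E(f_*W)$. Therefore $f_*W$ is an $X$-nef $\R$-Weil $b$-divisor.
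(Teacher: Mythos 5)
Your part (1) is correct and is essentially the paper's argument (take $\fa=\pi_*\sO_{X_\pi}(W_\pi)$ and use surjectivity of $\pi^*\pi_*\sO_{X_\pi}(W_\pi)\to\sO_{X_\pi}(W_\pi)$; write a general Cartier divisor as a difference of $X$-very ample ones). For (2) and (3), however, you have taken a genuinely different route from the paper, and the route has a gap exactly at the step you yourself flag as the difficulty. The paper does not push cycles forward on a smooth model at all: it reduces to $W=Z(\fa)$ with $\fa$ an ideal sheaf of $Y$ and invokes the norm identity $f_*Z(\fa)=Z(N_{Y/X}(\fa))$ from \cite[Proposition 1.14]{BdFF}. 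This identity does all the work at once: it exhibits $f_*Z(\fa)$ as a Cartier $b$-divisor (namely the one determined on the normalized blow-up of the ideal $N_{Y/X}(\fa)\subseteq\sO_X$), and, via your part (1), it is automatically $X$-globally generated, hence $X$-nef; part (3) then follows by writing a $Y$-nef $\R$-Weil $b$-divisor as a limit of rescaled $Z(\fa)$'s and passing to the limit.

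The gap in your (2): you need that for smooth models $\pi''\ge\pi$ the trace $(f_*Z(\fa))_{\pi''}$ equals the pullback $\mu^*\bigl((f_*Z(\fa))_\pi\bigr)$, i.e.\ that $g_{\pi''*}(\nu^*C)=\mu^*(g_{\pi*}C)$ for the Cartier divisor $C=Z(\fa)_{\pi'}$. The projection formula only yields $\mu_*\bigl(g_{\pi''*}(\nu^*C)\bigr)=g_{\pi*}C$, i.e.\ compatibility under \emph{push-forward} — but that is automatic for any Weil $b$-divisor and says nothing about the coefficients of $g_{\pi''*}(\nu^*C)$ along the $\mu$-exceptional divisors, which is the entire content of Cartier-ness. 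Smoothness of $X_\pi$ guarantees that each trace is Cartier on its own model, not that the traces form a pullback-compatible system; and your choice of $X_\pi$ (large enough that $\fa\cdot\sO_{Y_{\pi'}}$ is invertible) does not obviously imply that $N_{Y/X}(\fa)\cdot\sO_{X_\pi}$ is invertible, which is what determination of $f_*Z(\fa)$ on $X_\pi$ amounts to. The clean way to close the gap is the norm formula: locally $\ord_E(f_*\Div(h))=\ord_E(\Div(N_{Y/X}(h)))$ for $h\in\fa$, and minimizing over generators gives $f_*Z(\fa)=Z(N_{Y/X}(\fa))$ on every model simultaneously. Your part (3) is then salvageable — the finite-dimensionality of $f_*V$ and the coefficient-wise convergence $\ord_E(f_*W_i)\to\ord_E(f_*W)$ via the fixed linear combination over the preimages of $E$ are fine — but as written it inherits the unproved structure from (2), and the curve-by-curve nefness argument becomes unnecessary once one knows $f_*W$ is a limit of multiples of $Z(\fb)$ with $\fb\subseteq\sO_X$.
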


\begin{proof}
Let $W$ be a Cartier $b$-divisor over $X$ determined on $\pi$ and $W_\pi$ is $X$-globally generated.
Since the natural map
$\pi^* \pi_* \sO_{X_\pi} (W_\pi) \to \sO_{W_\pi} (W_\pi)$
is surjective,
we see that $W=Z(\fa)$ with $\fa := \pi_* \sO_{X_\pi} (W_\pi)$.
The second assertion of (1) follows from the fact that any Cartier divisor on a given model $X_\pi$ 
can be written as a difference of two $\pi$-very ample Cartier divisors.

Next we will prove (2) so let $W$ be an $\R$-Cartier $b$-divisor over $Y$.
By the statement of (1), we may assume $W = Z(\fa)$ where $\fa$ is an ideal sheaf of $Y$.
In fact that we have
\[
f_* Z(\fa) = Z(N_{Y/X} (\fa) )
\]
where $N_{Y/X} (\fa) $ denotes the image of $\fa$ under the norm homomorphism.
We omit the proof of this fact. See {\cite[Proposition 1.14]{BdFF}} for the details.

Next we will prove (3) so let $W$ be a $Y$-nef $\R$-Weil $b$-divisor over $Y$.
By the fact of $W=\env_\X (W)$ and the definition of nef envelope,
$W$ is the limit of a net of $X$-globally generated $b$-divisors $\{r_\pi Z(\fa)\}$ where 
$r_\pi$ is a positive real number.
By the proof of the assertion of (2),  we have $f_* W = \lim {r_\pi Z(N_{Y/X} (\fa))}$ and this is 
an $X$-nef $\R$-Weil $b$-divisor.

\end{proof}

\begin{prop}
Let $W$ be an $\R$-Weil $b$-divisor over $X$ and $W$ has the nef envelope.
Then $f^* W$ has the nef envelope and we have
\[
\env_\Y (f^*W) =f^* \env_\X (W).
\]
We similarly have 
\[
\env_Y (f^* D) = f^* \env_X (D)
\]
for any $\R$-Weil divisor $D$ on $X$.
\end{prop}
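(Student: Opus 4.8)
The plan is to reduce the statement about $b$-divisors to the divisorial statement $\env_Y(f^*D) = f^*\env_X(D)$, and then prove the latter by unwinding the definition of the nef envelope as the $b$-divisor associated to the graded sequence $\{\pi_*\sO_{X_\pi}(mD)\}$. For the first reduction, recall that if $W$ has the nef envelope then $\env_\X(W) = \lim_\pi \env_\pi(W_\pi)$ as in Proposition-Definition 2.15. Since $f^*$ is linear on $b$-divisors and compatible with taking limits of convergent nets (the coefficients of $f^*C$ are $\R$-linear functions of the coefficients of $C$, and a convergent net stays in a finite-dimensional space, which $f^*$ maps into another finite-dimensional space), it suffices to treat the case where $W$ is itself of the form $\env_\pi(D)$ for $D$ a divisor on a fixed model $X_\pi$. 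Replacing $X$ by $X_\pi$, we are reduced to the displayed divisorial statement.

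First I would set up the morphism of models: given a birational model $\rho : Y_\rho \to Y$, there is a model $\pi$ over $X$ dominating it in the sense that $f$ lifts to $g : Y_\rho \to X_\pi$ (take a common resolution; the lift exists after refining), and every model over $X$ is dominated this way. The key identity I want is, for each $m$,
\[
\rho_*\sO_{Y_\rho}(m\, g^*D_\pi) \;=\; \sO_Y \cdot f^*\bigl(\pi_*\sO_{X_\pi}(mD_\pi)\bigr),
\]
or more precisely that the two graded sequences of fractional ideals define the same $b$-divisor. The inclusion $\supseteq$ is formal: sections of $\sO_{X_\pi}(mD_\pi)$ pull back to sections of $\sO_{Y_\rho}(mg^*D_\pi)$ and then push forward. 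For the reverse, I would use that $f$ is finite, so $f_*\sO_Y$ is a coherent $\sO_X$-module, and a local computation with the trace/norm shows that a rational function on $Y$ with the right pole order along each divisor over $X$ lies in the subsheaf generated by pullbacks of functions on $X$ — this is where finiteness of $f$ and the hypothesis $f^{-1}(0) = \{0\}$ (ensuring centers behave well) enter. Concretely, one can pass to valuations: for a prime divisor $E$ over $Y$ with image $E'$ over $X$, one has $\ord_E(f^*\env_X(D)) = e(E/E')\,\ord_{E'}(\env_X(D))$ where $e$ is the ramification index, and matching this against $\ord_E(\env_Y(f^*D)) = \lim_m \tfrac1m \ord_E(\rho_*\sO_{Y_\rho}(mf^*D))$ reduces everything to the corresponding equality of asymptotic orders, which follows from the projection formula for the finite morphism $g$ together with the definition of $f^*$ on divisors.

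Having the identity of graded sequences, both sides of $\env_Y(f^*D) = f^*\env_X(D)$ are the associated $b$-divisors, so they agree; and the existence of $\env_Y(f^*W)$ in the $b$-divisor case follows because $f^*\env_\X(W)$ is an $X$-nef, hence (being a pushforward-compatible limit of $Y$-nef Cartier $b$-divisors via Proposition 2.18(3) applied appropriately) a $Y$-nef $b$-divisor lying below $f^*W$, so the set defining $\env_\Y(f^*W)$ is nonempty and Proposition-Definition 2.15 gives the largest element; the reverse inequality $\env_\Y(f^*W) \le f^*\env_\X(W)$ comes from pushing forward by $f_*$ and using $f_*f^* = \tilde d \cdot \id$ together with the negativity lemma. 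I expect the main obstacle to be the reverse inclusion in the graded-sequence identity — i.e. controlling $\rho_*\sO_{Y_\rho}(mf^*D)$ from above by pullbacks of ideals on $X$; this is essentially a statement that $f$ does not create extra sections, and making it precise requires care with non-$\Q$-Cartier divisors and with the behavior of norm and trace on fractional ideals along the finite extension $R \to S$. The cited \cite[Proposition 1.14]{BdFF} on $f_*Z(\fa) = Z(N_{Y/X}(\fa))$ should carry most of the weight, combined with the elementary fact that $N_{Y/X}$ and $f^*$ are adjoint up to the factor $\tilde d$.
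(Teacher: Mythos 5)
Your proposal hinges on the ``key identity'' $\rho_*\sO_{Y_\rho}(m\,g^*D_\pi) = \sO_Y\cdot f^*\bigl(\pi_*\sO_{X_\pi}(mD_\pi)\bigr)$, and you correctly flag its reverse inclusion as the main obstacle --- but that reverse inclusion, even in the weakened asymptotic form ``the two graded sequences define the same $b$-divisor,'' \emph{is} precisely the statement to be proved, so the core of the argument is left open (indeed circular if one tries to justify the ``equality of asymptotic orders'' by appealing to $\ord_E(f^*\env_X(D))=e(E/E')\ord_{E'}(\env_X(D))$). The norm map does not close it directly: for a section $h$ of $\sO_{Y_\rho}(m\,g^*D_\pi)$ one gets $N_{Y/X}(h)\in\sO_X(m\deg(f)D)$, but $\ord_E(N_{Y/X}(h))$ is a \emph{sum} of the orders of the conjugates of $h$, which gives no useful lower bound on $\ord_E(h)$ for the individual section; so ``$f$ does not create extra sections'' cannot be extracted this way, and \cite[Proposition 1.14]{BdFF} (which computes $f_*Z(\fa)$, not $f^*$) does not carry the weight you assign to it.

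What does work is the remark you make in one clause at the very end, which is the paper's entire proof and makes the graded-sequence apparatus and the reduction of your first paragraph unnecessary. Namely: $f^*\env_\X(W)$ is $Y$-nef and $\le f^*W$, so $\env_\Y(f^*W)$ exists and $f^*\env_\X(W)\le\env_\Y(f^*W)\le f^*W$; pushing forward, $f_*\env_\Y(f^*W)$ is $X$-nef by Proposition 3.10(3) and satisfies $f_*\env_\Y(f^*W)\le f_*f^*W=\deg(f)\,W$, hence $f_*\env_\Y(f^*W)\le\deg(f)\,\env_\X(W)$ by the maximality characterization of the envelope; combined with $f_*f^*\env_\X(W)=\deg(f)\,\env_\X(W)$ this forces $f_*\bigl(\env_\Y(f^*W)-f^*\env_\X(W)\bigr)=0$, and since $f$ is finite surjective the effective difference vanishes. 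The identical sandwich proves $\env_Y(f^*D)=f^*\env_X(D)$ for a Weil divisor $D$, using $(f^*\env_X(D))_Y\le f^*D$ (Remark 2.14). Two slips in your sketch of this step: on divisors $f_*f^*=\deg(f)\cdot\id$, not $\tilde d\cdot\id$ (these differ by the residue degree $[l:k]$), and the $Y$-nefness of $f^*\env_\X(W)$ comes from pulling back relatively nef Cartier $b$-divisors along the finite map, not from Proposition 3.10(3), which concerns $f_*$.
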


\begin{proof}
Since $f^* \env_\X (W) \leq f^*W$ and $f^* \env_\X (W)$ is $Y$-nef,
$f^* W$ has the nef envelope and we have
\[
f^* \env_\X (W) \leq \env_\Y (f^*W) \leq f^*W.
\]
Since $f_* \env_\Y (f^*W)$ is $X$-nef by Proposition 3.10 (3),
we have 
\[
f_* \env_\Y (f^*W) \leq \env_\X (f_* f^* W) = \mathrm{deg}(f) \env_\X (W).
\]
Thus we have $f_* \env_\Y (f^*W) = \env_\X (W)$ 
and since  $f^* \env_\X (W) \leq \env_\Y (f^*W)$,
this inequality is equal.

\end{proof}

\begin{prop}
Let $W_1 , \ldots W_n$ be $X$-nef $\R$-Weil $b$-divisors over 0.
Then we have
\[
f^* W_1 \cdot \ldots \cdot f^* W_n \leq \tilde{d} \cdot (W_1 \cdot \ldots \cdot W_n)
\]
\end{prop}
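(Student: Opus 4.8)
The plan is to reduce the inequality, via the definition of the intersection product (Definition 3.7), to Cartier $b$-divisors of the form $Z(\fa)$ with $\fa$ an $\m$-primary ideal sheaf, and then to read it off from a formula for mixed multiplicities under the finite extension $R\to S$; the factor $[l:k]$ in $\tilde{d}$ appears precisely when one compares lengths over $S$ with lengths over $R$.

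First I would reduce to the Cartier case. If $C_1,\dots,C_n$ are $X$-nef $\R$-Cartier $b$-divisors over $0$ with $C_i\ge W_i$, then $f^*C_i\ge f^*W_i$, and $f^*C_i$ is again a $Y$-nef $\R$-Cartier $b$-divisor over $0$: it is Cartier since $f^*Z(\fa)=Z(\fa\sO_Y)$ (pull back the normalized blow-up), $f^*$ is linear, and Proposition 3.10 (1) says the $Z(\fa)$ generate the Cartier $b$-divisors; it is $Y$-nef because the pullback of a relatively nef divisor along a finite morphism is relatively nef; and it is over $0$ because $f^{-1}(0)=\{0\}$. Hence Definition 3.7 gives
\[
f^*W_1\cdot\ldots\cdot f^*W_n \;\le\; f^*C_1\cdot\ldots\cdot f^*C_n,
\]
so it suffices to prove $f^*C_1\cdot\ldots\cdot f^*C_n\le \tilde{d}\cdot(C_1\cdot\ldots\cdot C_n)$ for all such $C_i$ and then take the infimum over $C_i\ge W_i$. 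Next, by Remark 3.3 such a $C_i$ satisfies $C_i\le 0$; writing $C_i=\bar{C_{i,\pi}}$ on a model $\pi$ with $C_{i,\pi}$ $\pi$-nef and supported on $\pi^{-1}(0)$, one has (using Remark 2.14) $C_i=\env_\pi(C_{i,\pi})=\sup_m\tfrac{1}{m}Z(\fa_{i,m})$, where $\fa_{i,m}:=\pi_*\sO_{X_\pi}(mC_{i,\pi})$ is an $\m$-primary ideal sheaf of $X$ (an ideal since $C_{i,\pi}\le 0$, and $\m$-primary since $C_{i,\pi}$ is supported over $0$). Using multilinearity and continuity of the Cartier intersection product over $0$ (\cite{BdFF}) together with $f^*Z(\fa)=Z(\fa\sO_Y)$, the problem reduces to proving
\[
Z(\fa_1\sO_Y)\cdot\ldots\cdot Z(\fa_n\sO_Y)\;=\;\tilde{d}\cdot\big(Z(\fa_1)\cdot\ldots\cdot Z(\fa_n)\big)
\]
for $\m$-primary ideal sheaves $\fa_1,\dots,\fa_n$ of $X$, noting that each $\fa_i\sO_Y$ is $\n$-primary since $f^{-1}(0)=\{0\}$.

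For this last identity, by \cite{BdFF} the two sides are $(-1)^n e_S(\fa_1\sO_S,\dots,\fa_n\sO_S)$ and $(-1)^n\tilde{d}\cdot e_R(\fa_1,\dots,\fa_n)$, where $e$ denotes mixed multiplicity, so it remains to show
\[
e_S(\fa_1\sO_S,\dots,\fa_n\sO_S)\;=\;\frac{\mathrm{deg}(f)}{[l:k]}\,e_R(\fa_1,\dots,\fa_n).
\]
For a finite-length $S$-module $M$ one has $\len(M)=[l:k]\,\ell_S(M)$, since each composition factor $S/\n\cong l$ has $R$-length $[l:k]$; applying this with $M=S/\fa_1^{t_1}\cdots\fa_n^{t_n}S$ and comparing the coefficients of $t_1\cdots t_n$ for $t_i\gg 0$ gives $e_S(\fa_1\sO_S,\dots,\fa_n\sO_S)=\tfrac{1}{[l:k]}\,e_R(\fa_1,\dots,\fa_n;S)$, the mixed multiplicity of the $R$-module $S$. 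By the associativity formula for mixed multiplicities, $e_R(\fa_1,\dots,\fa_n;S)=\sum_{\p}\ell_{R_\p}(S_\p)\,e_R(\fa_1,\dots,\fa_n;R/\p)$, the sum over the minimal primes of $\Supp_R S$; since $R\to S$ is finite and injective and $S$ is a domain with $\dim S=\dim R=n$, the only such prime is $(0)$, for which $R/(0)=R$ and $\ell_{R_{(0)}}(S_{(0)})=[\mathrm{Frac}(S):\mathrm{Frac}(R)]=\mathrm{deg}(f)$; hence $e_R(\fa_1,\dots,\fa_n;S)=\mathrm{deg}(f)\,e_R(\fa_1,\dots,\fa_n)$. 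Combining the last two displays yields the identity with the factor $\tilde{d}$.

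I expect the main obstacle to be making the reduction in the second paragraph rigorous at the level of generality of the paper: one needs that the intersection number of Cartier $b$-divisors over $0$ is finite, multilinear, continuous, and equal to the relevant mixed multiplicity \emph{without} assuming that $X$ has an isolated singularity or that $k$ is algebraically closed. Once that bookkeeping is in place the content is conceptual: the residue field degree $[l:k]$ enters exactly through the length comparison $\len=[l:k]\,\ell_S$, while the remaining factor $\mathrm{deg}(f)$ is the generic rank of $S$ over $R$ supplied by the associativity formula — which is why the correct constant is $\tilde{d}=\mathrm{deg}(f)/[l:k]$ rather than $\mathrm{deg}(f)$.
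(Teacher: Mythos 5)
Your proof is correct and follows the same skeleton as the paper's: both arguments take arbitrary $X$-nef $\R$-Cartier $b$-divisors $C_i\ge W_i$ over $0$, observe that $f^*C_i$ is a $Y$-nef $\R$-Cartier $b$-divisor over $0$ dominating $f^*W_i$, deduce $f^*W_1\cdots f^*W_n\le f^*C_1\cdots f^*C_n$ from the infimum definition, and conclude by taking the infimum after the projection-formula identity $f^*C_1\cdots f^*C_n=\tilde{d}\,(C_1\cdots C_n)$. The difference is in how that identity is handled: the paper asserts it in one line, remarking only that the left side is computed with $Y$ viewed over $l$ and the right side with $X$ viewed over $k$, whereas you actually prove it, reducing via $C_i=\env_\pi(C_{i,\pi})=\sup_m\frac1m Z(\fa_{i,m})$ to $\m$-primary ideals and then computing $e_S(\fa_1\sO_S,\dots,\fa_n\sO_S)=\frac{\deg(f)}{[l:k]}e_R(\fa_1,\dots,\fa_n)$ by the length comparison $\len(M)=[l:k]\,\ell_S(M)$ together with the associativity formula. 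This is exactly the bookkeeping that makes the constant $\tilde d$ rather than $\deg(f)$, so your expansion is a genuine improvement in rigor over the paper's citation of ``the projection formula.'' Two small points: the constant relating $Z(\fa_1)\cdots Z(\fa_n)$ to the mixed multiplicity is $-1$, not $(-1)^n$ (consistent with $Z(\m)^n<0$ in Proposition 3.8), but since the same convention appears on both sides this does not affect the argument; and the identity $C_i=\env_\pi(C_{i,\pi})$ for a $\pi$-determined $X$-nef Cartier $b$-divisor should be justified by applying the negativity lemma over $X_\pi$ (as in Remark 2.14), which is routine. With those noted, the proposal is sound.
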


\begin{proof}
We take any $X$-nef $\R$-Cartier $b$-divisor $C_i$ over 0 such that $W_i \leq C_i$ for all $i=1, \ldots , n$.
Since $f^* W_i \leq f^* C_i $ and $f^* C_i$ is $Y$-nef $\R$-Cartier $b$-divisors over 0 and the projection formula,
we have
\begin{eqnarray*}
f^* W_1 \cdot \ldots \cdot f^* W_n &\leq& f^* C_1 \cdot \ldots \cdot f^*C_n \\
&=& \tilde{d} \cdot (C_1 \cdot \ldots \cdot C_n )
\end{eqnarray*}
where $f^* C_1 \cdot \ldots \cdot f^*C_n$ is the intersection number considering $Y$ as a scheme over $l$ 
and $C_1 \cdot \ldots \cdot C_n $ is the intersection number considering $X$ as a scheme over $k$.
Taking infimum, we have
\[
f^*W_1 \cdot \ldots \cdot f^* W_n \leq \tilde{d} \cdot (W_1 \cdot \ldots \cdot W_n)
\]
\end{proof}

\begin{rem}
If $X$ and $Y$ have an isolated singularities, then the inequality in Proposition 3.11 is equal by \cite{BdFF}[Proposition 4.16.].
But we do not know that this inequality is equal or not in general cases.
\end{rem}

\section{The isolated volume and log discrepancy of non $\Q$-Gorenstein varieties}

In this section, we introduce log discrepancies of non $\Q$-Gorenstein varieties by using the nef envelope
in place of the pullback. 
Here, we can consider two nef envelopes $\env_X(K_X)$ and $-\env_X(-K_X)$,
but we use $\env_x (K_X)$ and we state it define a good singularities with many meaning.
Next, we define the isolated volume characterizing valuative log canonical singularities and 
we observe the properties of this invariant under  finite morphisms.
Finally we will prove the local statement Theorem 1.3 corresponding to the main theorem.

\subsection{Log discrepancy with respect to $\env_X (K_X)$ and related singularities}
In this subsection, $X$ is as in Convention 2.1.

\begin{defn}[\textup{cf. \cite[Definition 3.4]{BdFF} and \cite[Definition 3.2]{dFH}}]
The \emph{log-discrepancy $b$-divisor} is defined by
\[
\AX^+ := K_{\X} + 1_{\X/X} - \env_X(K_X)
\]
where  $(1_{\X/X})_\pi$ is the sum of exceptional prime divisors on $X_\pi$, 
and $1_{\X/X}$ is the projective limit of $\{(1_{\X/X})_\pi \}$.
\end{defn}

\begin{rem}
In \cite{BdFF}, log-discrepancy $b$-divisor is $ K_{\X} + 1_{\X/X} + \env_X(-K_X) $ and 
it is not equal to $\AX^+$ in general.
But by Section $6$, if $X$ is numerically $\Q$-Gorenstein, then these are equal,
in particular if $X$ is a surface, these give same definitions.
\end{rem}

\begin{defn}
$X$ has \emph{valuative log canonical singularities} if $\AX^+ \geq 0$.
\end{defn}

Next we discuss this singularities.
If $X$ is $\Q$-Gorenstein, then this is the log canonical singularity,
but if $X$ is not $\Q$-Gorenstein, this singularity is related to the singularities in 
\cite{Ful} or \cite{BH}.

\begin{thm}
The following are equivalence to each other.
\begin{enumerate}
\item
$X$ has valuative log canonical singularities.

\item
For any birational model $X_\pi$ over $X$,
we have
\[
\pi_* \sO_{X_\pi} (m(K_{X_\pi} + E^\pi)) = \sO_X(mK_X)
\]
for any positive number $m$, 
where $E^\pi$ is the sum of the exceptional prime divisors on $X_\pi$.

\ \newline
If $X$ has the log canonical model, next condition is also equivalent.
\item
The log canonical model of $X$ is small i.e, isomorphism in codimension one.
\end{enumerate}
\end{thm}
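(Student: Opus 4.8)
The plan is to prove the cycle of implications $(1)\Leftrightarrow(2)$ first, and then $(2)\Leftrightarrow(3)$ under the additional hypothesis that the log canonical model exists. Throughout, write $E^\pi=(1_{\X/X})_\pi$ for the reduced exceptional divisor on $X_\pi$, so that $\AX^+=K_\X+1_{\X/X}-\env_X(K_X)$, and recall from Remark 2.15 that $\env_X(K_X)_\pi\le K_{X_\pi}$, so $\AX^+$ always has nonnegative trace on every model; valuative log canonicity says $\AX^+\ge 0$ as a $b$-divisor, i.e.\ $\ord_E(\AX^+)\ge 0$ for every prime divisor $E$ over $X$.

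\textbf{$(1)\Rightarrow(2)$.} Fix a birational model $\pi$ and $m>0$. The inclusion $\pi_*\sO_{X_\pi}(mK_{X_\pi}+mE^\pi)\supseteq \sO_X(mK_X)$ is automatic because $K_{X_\pi}+E^\pi$ restricts to $K_X$ on the locus where $\pi$ is an isomorphism and both sheaves are reflexive (they agree in codimension one on $X$). For the reverse inclusion, note that $\pi_*\sO_{X_\pi}(mK_{X_\pi}+mE^\pi)$ is reflexive on $X$, hence determined by its behaviour at codimension-one points, but more to the point I would argue via $b$-divisors: a local section $f$ of $\pi_*\sO_{X_\pi}(mK_{X_\pi}+mE^\pi)$ satisfies $\ord_F(f)+\ord_F\big(m(K_{X_\pi}+E^\pi)\big)\ge 0$ for every prime $F$ on $X_\pi$; applying this on all higher models $\tau\ge\pi$ and using $K_{X_\tau}+E^\tau=\mu^*(K_{X_\pi}+E^\pi)+(\text{terms supported on }\mu\text{-exceptional divisors with coefficient }\ge 0$ coming from the fact that the new exceptional divisors appear in $E^\tau$ with coefficient $1\ge$ their coefficient in $\mu^*E^\pi$), one gets $\ord_F(f)\ge -m\,\ord_F(K_\X+1_{\X/X})$ for all $F$ over $X$. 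Combined with $\AX^+\ge 0$, i.e.\ $K_\X+1_{\X/X}\ge\env_X(K_X)$, we obtain $\ord_F(f)\ge -m\,\ord_F(\env_X(K_X))=-\ord_F(\env_X(mK_X))\ge$ (by definition of the nef envelope as the $b$-divisor associated to $\{\sO_X(mK_X)\}$, Example 2.4 and Definition 2.8) the condition characterizing membership in $\sO_X(mK_X)$. Hence $f\in\sO_X(mK_X)$.

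\textbf{$(2)\Rightarrow(1)$.} Suppose $\AX^+\not\ge 0$; then there is a prime divisor $E$ over $X$ with $\ord_E(K_\X+1_{\X/X})<\ord_E(\env_X(K_X))$. Realize $E$ on a model $\pi$ with $E\subset E^\pi$, so $\ord_E(K_{X_\pi}+E^\pi)=\ord_E(K_{X_\pi})+1$; the strict inequality, rescaled, gives for $m\gg 0$ a section of $\env_X(mK_X)$ — equivalently, a rational function $f$ with $\Div(f)+mK_X\ge 0$ as a Weil divisor on $X$ — whose pullback to $X_\pi$ has $\ord_E(f)+m\,\ord_E(K_{X_\pi}+E^\pi)<0$ once $m$ is large enough that the gap $m\big(\ord_E(\env_X(K_X))-\ord_E(K_\X+1_{\X/X})\big)$ exceeds $1$. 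Then $f\notin\pi_*\sO_{X_\pi}(m(K_{X_\pi}+E^\pi))$ although $f\in\sO_X(mK_X)$, so $(2)$ fails; making this choice of $f$ concrete from the definition of $\env_X$ is the one routine-but-fiddly point.

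\textbf{$(2)\Leftrightarrow(3)$.} Assume the log canonical model $\mu:Y\to X$ exists, so that $K_Y$ is $\Q$-Cartier, $\mu$-ample (after suitable interpretation), and $(Y,0)$ is klt or lc in the usual sense with $K_Y=\mu^*_{\mathrm{num}}$-something; the key identity is $\mu_*\sO_Y(mK_Y)=\bigoplus$-piece $=\pi_*\sO_{X_\pi}(m(K_{X_\pi}+E^\pi))$ for $\pi\ge(\text{a log resolution dominating }Y)$, valid for $m$ divisible enough, because $K_{X_\pi}+E^\pi=$ (pullback of $K_Y$) $+$ (effective exceptional), the standard negativity/ample pushforward computation. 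If $\mu$ is an isomorphism in codimension one, then $K_Y$ and $K_X$ agree as Weil divisors, $\mu_*\sO_Y(mK_Y)=\sO_X(mK_X)$, and running $\pi$ over all models gives $(2)$. Conversely, if $(2)$ holds, then $\sO_X(mK_X)=\pi_*\sO_{X_\pi}(m(K_{X_\pi}+E^\pi))=\mu_*\sO_Y(mK_Y)$, so the $\Q$-Cartier divisor $K_Y$ has section ring equal to that of $K_X$; since $\mu$ is the log canonical model, $K_Y$ is $\mu$-ample, and an $\mu$-ample divisor with $\mu_*\sO_Y(mK_Y)$ reflexive-on-$X$ forces $\mu$ to be small (otherwise a $\mu$-exceptional divisor would be contracted and the section ring would be strictly larger, contradicting the equality). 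I expect the main obstacle to be the $(1)\Rightarrow(2)$ direction: the content is exactly the negativity lemma (Proposition 2.12) plus the compatibility of $E^\tau$ across towers of models, and care is needed to see that the coefficients of the new exceptional divisors in $E^\tau$ dominate those in the pullback $\mu^*E^\pi$, which is where valuative log canonicity is genuinely used rather than formal manipulation.
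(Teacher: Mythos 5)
There is a genuine error in your $(1)\Rightarrow(2)$ step: you have swapped the trivial and the nontrivial inclusion. The inclusion $\pi_*\sO_{X_\pi}(m(K_{X_\pi}+E^\pi))\subseteq\sO_X(mK_X)$ is the automatic one: a section $f$ of the left-hand sheaf satisfies $\Div_{X_\pi}(f)+m(K_{X_\pi}+E^\pi)\ge 0$, and pushing this forward (equivalently, looking only at codimension-one points of $X$, which suffices because $\sO_X(mK_X)$ is reflexive) gives $\Div_X(f)+mK_X\ge 0$. The inclusion you declare ``automatic,'' namely $\sO_X(mK_X)\subseteq\pi_*\sO_{X_\pi}(m(K_{X_\pi}+E^\pi))$, is exactly where hypothesis (1) must be used: the pushforward $\pi_*\sO_{X_\pi}(m(K_{X_\pi}+E^\pi))$ is \emph{not} reflexive on $X$ in general (a section defined off a codimension-$2$ center pulls back to a section defined off a divisor on $X_\pi$, so it need not extend), and its failure to equal its reflexive hull $\sO_X(mK_X)$ is precisely what the theorem measures. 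Moreover the argument you do give for your ``reverse inclusion'' contains a sign error: from $\ord_F(f)\ge -m\,\ord_F(K_\X+1_{\X/X})$ and $K_\X+1_{\X/X}\ge\env_X(K_X)$ one gets $-m\,\ord_F(K_\X+1_{\X/X})\le -m\,\ord_F(\env_X(K_X))$, so the desired bound $\ord_F(f)\ge -m\,\ord_F(\env_X(K_X))$ does not follow. The correct argument for the nontrivial inclusion is short: for $f\in\sO_X(mK_X)$ one has $\Div(f)+Z(\sO_X(mK_X))\ge0$ as $b$-divisors; since $\env_X(K_X)\ge\frac1m Z(\sO_X(mK_X))$ this gives $\frac1m\Div(f)+\env_X(K_X)\ge0$, and (1) says $K_{X_\pi}+E^\pi\ge\env_X(K_X)_\pi$, whence $\Div_{X_\pi}(f)+m(K_{X_\pi}+E^\pi)\ge0$.

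Your $(2)\Rightarrow(1)$ by contraposition is a workable alternative to the paper, which instead notes that (2) forces $\env_X(K_X)=\env_\pi(K_{X_\pi}+E^\pi)$ and then applies the inequality $\env_\pi(D)_\pi\le D$ (Remark 2.14); your route requires the ``fiddly'' step of producing, for large $m$, a section of $\sO_X(mK_X)$ with $\ord_E(f)$ close to $-\ord_E(Z(\sO_X(mK_X)))$, which is fine. For $(2)\Leftrightarrow(3)$ your sketch is in the same spirit as the paper, but the claim that a non-small $\mu$ ``would make the section ring strictly larger'' is not a proof; the paper instead identifies $X_\pi=\Proj\bigoplus_m\sO_X(mK_X)$ from (2) and invokes \cite[Lemma 6.2]{KM} to conclude smallness, and conversely uses smallness plus $X$-ampleness of $K_{X_\pi}$ to get $\env_X(K_X)=\bar{K_{X_\pi}}$ and hence (1). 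You should restructure $(1)\Rightarrow(2)$ along the lines above and cite the Proj argument for $(2)\Rightarrow(3)$.
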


\begin{proof}
\ 

Proof of $\underline{(1) \Rightarrow (2)} $

$\pi_* \sO_{X_\pi} (m(K_{X_\pi} + E^\pi)) \subset \sO_X(mK_X)$ is clear , so we prove the converse inclusion.
We take $f \in \sO_X(mK_X)$, then 
\[
\mathrm{div} (f) + Z(\sO_X (mK_X)) \geq 0
\]
Since $\env_X(K_X) $ is the suprimum of $\{ \displaystyle{\frac{1}{m} Z(\sO_X(mK_X))} \}$,
we have
\[
\frac{1}{m} \mathrm{div}(f) + \env_X (K_X) \geq 0
\]
By the condition (1),
we have $K_{X_\pi} + E^\pi \geq \env_X(K_X)_{X_\pi}$, so we have
\[
\frac{1}{m} \mathrm{div}_{X_\pi}(f) + K_{X_\pi} + E^\pi \geq 0
\]
It means $f \in \pi_* \sO_{X_\pi}(m(K_{X_\pi} + E^\pi)) $.

Proof of $\underline{(2) \Rightarrow (1)}$

By the condition (2), $\env_X(K_X) = \env_{X_\pi}(K_{X_\pi} + E^\pi)$.
So it is enough to show $\env_{X_\pi}(D)_\pi \leq D$ for any Weil divisor $D$ on $X_\pi$.
It follows from Remark 2.14.

Next we assume $X$ has the log canonical model $X_\pi$.
Now the log canonical model satisfies following conditions.
\begin{itemize}
\item $K_{X_\pi} + E^\pi$ is a $X$-ample divisor.
\item $(K_{X_\pi} , E^\pi)$ is a log canonical pair.
\end{itemize}
where $E^\pi$ is the sum of the exceptional prime divisors.

Proof of $\underline{ (3) \Rightarrow (1)}$

Since $\pi$ is small, we have
\[
\pi_* \sO_{X_\pi} (mK_\pi) =\sO_{X} (mK_X)
\]
for any $m$ and there exists $m$ such that
\[
\bar{K_{X_\pi}} = \frac{1}{m}Z(\pi_* \sO_{x_\pi}(mK_{X_\pi}))
\]
by the ampleness of $K_{X_\pi}$.
Since we have $\env_X (K_X) = \bar{K_{X_\pi}}$ and $X_\pi$ is log canonical,
we have $\AX^+ \geq 0$.

Proof of $\underline {(2) \Rightarrow (3)}$

By the ampleness of $K_{X_\pi} + E^\pi$ and the condition (2),
we have
\begin{eqnarray*}
X_\pi &=& \Proj \bigoplus \pi_* \sO_{X_\pi}(m(K_{X_\pi} + E^\pi) ) \\
&=& \Proj \bigoplus \sO_X (mK_X),
\end{eqnarray*}
and it is small by {\cite[Lemma 6.2]{KM}}.

\end{proof}

\begin{rem}
In {\cite[Example 5.4]{BdFF}}, they gave the variety 
satisfying (2) in Theorem 4.4 but not satisfying
\[
\AX := K_{\X} + 1_{\X/X} + \env_X(-K_X) \geq 0
\].
In {\cite[Theorem 4.3]{Zha14}} and {\cite[Example 4.10]{Has}},
they gave the variety
satisfying
\[
\AX = K_{\X} + 1_{\X/X} + \env_X(-K_X) \geq 0
\]
but not satisfying
\[
A_{\X/X. m} := K_{\X} + 1_{\X/X} + \frac{1}{m} Z(\sO_X (-mK_X)) \geq 0
\] 
for any positive integer $m$.

\end{rem}

\begin{prop}
Let $X_\pi$ be a log resolution of $X$.
Then $\AX^+ \geq 0$ if and only if $(\AX^+)_\pi =K_{X_\pi} + E^\pi - \env_X(K_X)_\pi \geq 0$.
In particular
the non valuative log canonical set
\[
\mathrm{Nvlc} :=\bigcup_{\ord_E (\AX^+) < 0} \mathrm{c}_X (E)
\]
is a Zariski closed subset.
\end{prop}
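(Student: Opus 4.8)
The plan is the following. One direction is immediate: if $\AX^+\ge 0$ then in particular its trace $(\AX^+)_\pi$ on $X_\pi$ is $\ge 0$. For the converse, I would assume $(\AX^+)_\pi = K_{X_\pi}+E^\pi-\env_X(K_X)_\pi\ge 0$ and prove $\ord_E(\AX^+)\ge 0$ for an arbitrary prime divisor $E$ over $X$. Every such $E$ appears as a prime divisor on some birational model $\pi'\ge\pi$; fixing such a $\pi'$ with induced morphism $g\colon X_{\pi'}\to X_\pi$ and using that $(K_\X+1_{\X/X})_{\pi'}=K_{X_{\pi'}}+E^{\pi'}$, it suffices to show the coefficient-wise inequality $\env_X(K_X)_{\pi'}\le K_{X_{\pi'}}+E^{\pi'}$.

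First I would record the chain
\[
\env_X(K_X)\ \le\ \env_\pi\big(\env_X(K_X)_\pi\big)\ \le\ \env_\pi\big(K_{X_\pi}+E^\pi\big),
\]
where the first inequality is the negativity lemma (Proposition 2.12) applied to the $X$-nef $\R$-Weil $b$-divisor $\env_X(K_X)$, and the second is the monotonicity of $\env_\pi$ together with the hypothesis $\env_X(K_X)_\pi\le K_{X_\pi}+E^\pi$. Next I would bound $\ord_E$ of the rightmost term: setting $\fa_m=\pi_*\sO_{X_\pi}(m(K_{X_\pi}+E^\pi))$ for a fixed representative of $K_{X_\pi}$, every $h\in\fa_m$ satisfies $\Div_{X_\pi}(h)\ge -m(K_{X_\pi}+E^\pi)$, hence $\Div_{X_{\pi'}}(h)=g^*\Div_{X_\pi}(h)\ge -m\,g^*(K_{X_\pi}+E^\pi)$; since $\env_\pi(K_{X_\pi}+E^\pi)=\lim_m\frac{1}{m}Z(\fa_m)$ and $\ord_E(Z(\fa_m))=-\min_{h\in\fa_m}\ord_E(h)$, this gives $\ord_E\big(\env_\pi(K_{X_\pi}+E^\pi)\big)\le\ord_E\big(g^*(K_{X_\pi}+E^\pi)\big)$. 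Finally, because $\pi$ is a log resolution, $E^\pi$ is a reduced simple normal crossing divisor on the smooth variety $X_\pi$, so $(X_\pi,E^\pi)$ is log canonical; therefore the coefficient of every $g$-exceptional prime divisor $F$ in $K_{X_{\pi'}}+E^{\pi'}-g^*(K_{X_\pi}+E^\pi)$ equals the non-negative log discrepancy $a_F(X_\pi,E^\pi)$, the remaining coefficients vanish, and so $g^*(K_{X_\pi}+E^\pi)\le K_{X_{\pi'}}+E^{\pi'}$. Combining the three displays gives $\ord_E(\env_X(K_X))\le\ord_E(K_{X_{\pi'}}+E^{\pi'})$, i.e. $\ord_E(\AX^+)\ge 0$.

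The hard part will be the middle estimate: $\env_X(K_X)_\pi$ need not be $\R$-Cartier on $X_\pi$, so it cannot be pulled back along $g$ directly, and this is exactly why the argument is routed through $\env_\pi(K_{X_\pi}+E^\pi)$ and the negativity lemma rather than handled naively on $X_{\pi'}$. I would also want to make explicit that the log-resolution hypothesis is essential in the last step: for a general resolution $(X_\pi,E^\pi)$ can fail to be log canonical and $g^*(K_{X_\pi}+E^\pi)\le K_{X_{\pi'}}+E^{\pi'}$ can fail.

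For the closedness of $\mathrm{Nvlc}$, I would first observe that $\pi_*(\AX^+)_\pi=(\AX^+)_X=0$ (since $\env_X(K_X)_X=K_X$ and there are no exceptional divisors over $X$), so $(\AX^+)_\pi$ is an actual $\R$-divisor on $X_\pi$ supported on $\Exc(\pi)$. Let $Z\subseteq X_\pi$ be the union of the prime divisors along which $(\AX^+)_\pi$ is negative; this is closed in $\Exc(\pi)$, and I claim $\mathrm{Nvlc}=\pi(Z)$, which is Zariski closed because $\pi$ is proper. The inclusion $\pi(Z)\subseteq\mathrm{Nvlc}$ is immediate, since $\ord_D(\AX^+)=\ord_D((\AX^+)_\pi)$ and $\mathrm{c}_X(D)=\pi(D)$ for any prime divisor $D$ on $X_\pi$. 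For the reverse inclusion I would take $x\notin\pi(Z)$ and apply the equivalence just established to $X'=\Spec\sO_{X,x}$ with the induced log resolution $X_\pi\times_X X'$: no point of $Z$ lies over a point of $X'$, for otherwise properness of $\pi$ would force $x\in\pi(Z)$; hence the trace of $\mathrm{A}^+_{\X'/X'}$ there --- which is the restriction of $(\AX^+)_\pi$ --- is non-negative, so $\mathrm{A}^+_{\X'/X'}\ge 0$, and therefore $\ord_E(\AX^+)\ge 0$ for every prime divisor $E$ over $X$ with $x\in\mathrm{c}_X(E)$, that is $x\notin\mathrm{Nvlc}$.
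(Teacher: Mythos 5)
Your proof is correct and follows essentially the same route as the paper's: both rest on the negativity lemma combined with the log canonicity of the pair $(X_\pi,E^\pi)$ supplied by the log-resolution hypothesis, and the closedness of $\mathrm{Nvlc}$ is obtained by localizing the equivalence away from the finitely many divisors of $X_\pi$ where the trace is negative. The only cosmetic difference is in the middle comparison: the paper gets $\env_X(K_X)_{\pi'}\le g^*\bigl(\env_X(K_X)_\pi\bigr)$ by applying the negativity lemma over $X_\pi$ (using that the trace is $\R$-Cartier on the smooth model), whereas you route it through $\env_\pi(K_{X_\pi}+E^\pi)\le \overline{K_{X_\pi}+E^\pi}$ --- the two are interchangeable.
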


\begin{proof}
Let $X_\pi$ be a log resolution of $X$.
Since the union of the element of
\[
 \{ \mathrm{c}_X(E) \ | \ \mathrm{ord}_E(K_{X_\pi} + E^\pi - \env_X(K_X)_\pi) < 0
 \mbox{, E : prime divisor on} \  X_\pi \}
\]
is a closed subset of $X$,
considering the complement of this set,
the second assertion follows from first assertion.
Thus we may assume $(\AX^+)_\pi \geq 0$.
Let $X_\mu$ be the birational model such that $\mu \geq \pi$ and we set $\nu : X_\mu \to X_\pi$.
Since $\env_X(K_X) $ is also a $X_\pi$-nef $b$-divisor over $X_\pi$,
we can use the negativity lemma over $X_\pi$ and we have
\[
\env_X(K_X) \leq \env_{X_\pi} (\env_X(K_X)_\pi) = \bar{\env_X(K_X)_\pi}
\]
Since the pair $(X_\pi , E^\pi) $ is log canonical,
we have
\begin{eqnarray*}
(A_{\X/X}^+)_\mu
&=& K_{X_\mu} + E^\mu - \env_X(K_X)_\mu \\
&\geq& K_{X_\mu} + E^\mu - \nu^* (\env_X(K_X)_\pi) \\
&=& K_{X_\mu} + E^\mu - \nu^*(K_{X_\pi} + E^\pi) + \nu^*(K_{X_\pi} + E^\pi - \env_X(K_X)_\pi) \\
&\geq& 0
\end{eqnarray*}
\end{proof}

\subsection{The isolated volume and local version of main theorem}

In this subsection, $X$ is as in Convention 3.1.

\begin{defn}[\textup{cf. \cite[Definition 4.18]{BdFF}}]
If $\AX^+$ is bounded below which is defined in Definition 3.2, 
the isolated volume of $X$ defined by
\[
\Vol^+ := - ( \env_\X (\AX^+) )^n
\]
\end{defn}

\begin{rem}
It is also not equal to the isolated volume in \cite{BdFF} in general.
And if $\AX^+$ is bounded below, we can find $c >0$ such that $cZ(\m) \leq \AX^+$.
Then there exists $\env_\X (\AX^+)$ and $0 \leq \Vol^+ \leq c^n \cdot (- Z(\m)^n) < \infty$.
So  $\Vol^+$ is a positive finite number.
\end{rem}

By the following proposition, $\Vol^+$  characterizes the valuative log canonical singularities.

\begin{prop}
If $\AX^+$ is bounded below, 
then $\AX^+ \geq 0$ if and only if $\Vol^+ = 0$.
\end{prop}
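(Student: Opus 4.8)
The plan is to prove both implications using the characterization of $\env_\X(\AX^+)$ as the largest $X$-nef $\R$-Weil $b$-divisor below $\AX^+$, together with the positivity properties of intersection numbers collected in Proposition 3.8. First I would dispose of the easy direction: if $\AX^+ \geq 0$, then $0$ is an $X$-nef $\R$-Cartier $b$-divisor lying below $\AX^+$, so by Proposition-Definition 2.16 we get $0 \leq \env_\X(\AX^+)$. On the other hand, $\AX^+$ is bounded below, so by Remark 4.9 there is $c>0$ with $cZ(\m) \leq \AX^+$, which forces $\env_\X(\AX^+) \leq \AX^+$ by the defining maximality (note $\AX^+$ itself is $\geq 0$ hence its own largest nef minorant among those $\leq$ it, but more directly: $\env_\X(\AX^+)\le \AX^+$ always holds by construction in 2.16). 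The key point in this direction is that $\env_\X(\AX^+)$ is then squeezed: it is $X$-nef, it is $\leq 0$ would be false — rather, we need to observe that when $\AX^+\ge 0$ the b-divisor $\AX^+$ has a nef envelope equal to... actually the cleanest route is: $\env_\X(\AX^+) \geq 0$ and $\env_\X(\AX^+)_\pi \le (\AX^+)_\pi$; but since the log resolution trace $(\AX^+)_\pi \ge 0$ has components only where $\AX^+$ could be negative, and those are now absent, one shows $\env_\X(\AX^+)$ has trivial "$0$-part", whence $\Vol^+ = -(\env_\X(\AX^+))^n$ involves an intersection of b-divisors over $0$ that vanishes. I would phrase this as: $\env_\X(\AX^+)^0 = 0$ because a nonzero $X$-nef $\R$-Weil $b$-divisor over $0$ is $<0$ by Remark 3.3, contradicting $\env_\X(\AX^+)\geq 0$; hence $\Vol^+ = 0$.

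For the converse, suppose $\AX^+ \not\geq 0$. By Proposition 4.7, applied on a log resolution $X_\pi$, there is a prime divisor $E$ on $X_\pi$ with $\ord_E(\AX^+) = \ord_E(K_{X_\pi}+E^\pi - \env_X(K_X)_\pi) < 0$, and $E$ may be taken to have center $0$ (if all bad $E$ had center off $0$, one could restrict to $X\setminus 0$ and the $0$-part would be harmless — but since we are computing an invariant over $0$, I would argue directly that $\env_\X(\AX^+)^0 \neq 0$). Concretely: write $W := \env_\X(\AX^+)$. Since $W \leq \AX^+$ and $\ord_E(\AX^+)<0$ for some $E$ centered at $0$, we get $\ord_E(W) < 0$, so $W^0 \neq 0$. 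Now $W^0$ is itself an $X$-nef $\R$-Weil $b$-divisor over $0$ (the $0$-part of an $X$-nef b-divisor is $X$-nef — this needs a small check, or one works with $W$ directly), it is bounded below since $cZ(\m)\le \AX^+$ gives $cZ(\m)\le W$ after taking $0$-parts, and it is $\leq 0$ by Remark 3.3, with $\nu_0(W^0)\ne 0$. By Proposition 3.8(3) (contrapositive of $W_1\cdots W_n = 0 \Rightarrow W_1=\cdots=W_n=0$), applied with all $W_i = W^0$, we conclude $(W^0)^n \neq 0$, in fact $(W^0)^n < 0$ by the argument in the proof of Proposition 3.8(3) using Corollary 3.6. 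Hence $\Vol^+ = -(W)^n = -(W^0)^n > 0$, contradiction.

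The main obstacle I anticipate is bookkeeping around the decomposition $W = W^0 + W^{X\setminus 0}$ and verifying that $\Vol^+ = -(\env_\X(\AX^+))^n$ really only sees the $0$-part — i.e., that the intersection product of $X$-nef $\R$-Weil $b$-divisors over $0$ used in Definition 3.7 is the relevant one, and that passing from $\env_\X(\AX^+)$ to its $0$-part is legitimate (that the $0$-part of an $X$-nef envelope is again $X$-nef, bounded below, and $\le 0$). Once that reduction is in place, the two implications are immediate from Remark 3.3 (nonzero $\Rightarrow$ strictly negative), Corollary 3.6 (strict negativity at one valuation $\Rightarrow W^0 \leq \epsilon Z(\m)$), and Proposition 3.8(3) ($\Rightarrow (W^0)^n < 0$). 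A secondary subtlety is ensuring that the bad valuation $E$ from Proposition 4.7 can be taken centered at $0$; if $X$ does not have isolated singularities this requires care, but for the invariant $\Vol^+$ — which by Remark 3.3 and the boundedness hypothesis only measures the behavior over $0$ — one should instead argue that $\AX^+\not\ge0$ combined with boundedness below forces $\ord_E(\AX^+)<0$ for some $E$ over $0$, or simply restrict attention to $\AX^+$ as a b-divisor over $0$ throughout.
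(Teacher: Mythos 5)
Your proposal is correct and follows essentially the same route as the paper: both directions reduce to showing $\env_\X(\AX^+)=0$ via the maximality property of the nef envelope, the negativity lemma, and Proposition 3.8(3). The only difference is that the paper avoids your bookkeeping with the decomposition $W=W^0+W^{X\setminus 0}$ by noting directly that boundedness below ($cZ(\m)\le \AX^+$) together with $\env_\X(\AX^+)\le 0$ already forces $\env_\X(\AX^+)$ to be a $b$-divisor over $0$, so the detour through $0$-parts is unnecessary.
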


\begin{proof}
By Proposition 3.8 (3), the right-hand side is equivalent to $\env_\X(\AX^+) =0$.
Since $\AX^+ \geq \env_\X (\AX^+)$, $\Vol^+ = 0$ implies $\AX^+ \geq 0$.
Next we assume $\AX^+ \geq 0$,
since $0$ is a $X$-nef $b$-divisor, $\env_\X(\AX^+) \geq 0$.
But by the negativity lemma, $\env_\X(\AX^+) \leq 0$.
\end{proof}

\begin{prop}
If $(\AX^+)^{X \backslash 0} \geq 0$ defined in Definition 3.2, then $\AX^+ $ is bounded below.
\end{prop}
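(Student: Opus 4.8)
We must show that if $(\AX^+)^{X\backslash 0}\geq 0$, then $\AX^+$ is bounded below, i.e. there is a constant $c>0$ with $\AX^+\geq cZ(\m)$.

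The plan is as follows. Write $\AX^+ = (\AX^+)^0 + (\AX^+)^{X\backslash 0}$ as in Definition 3.2. By hypothesis the second summand is $\geq 0$, so it suffices to bound the part $(\AX^+)^0$ from below by $cZ(\m)$ for some $c>0$; indeed then $\AX^+ \geq cZ(\m) + 0 = cZ(\m)$, using that $Z(\m)$ is an $\R$-Weil $b$-divisor over $0$ (all its components are centered at $0$), so adding the nonnegative off-center part only helps. Thus the whole problem reduces to controlling the components of $\AX^+$ centered at the closed point.

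For that I would unpack $\AX^+ = K_\X + 1_{\X/X} - \env_X(K_X)$. The components of $K_\X + 1_{\X/X}$ centered at $0$ cause no trouble: on a fixed log resolution $X_\pi$ these have bounded, in fact explicit, coefficients, and more precisely $K_\X + 1_{\X/X}$ has only finitely many components with negative coefficient over any model, all of bounded size, so its $0$-part is bounded below by a multiple of $Z(\m)$ — one can compare $\ord_E(K_{X_\pi}+E^\pi)$ with $\ord_E(Z(\m))$ using Izumi-type estimates (Theorem 3.3/3.5) on the models dominating the normalized blow-up of $\m$. The genuinely delicate term is $-\env_X(K_X)$, equivalently $\env_X(K_X)$ must be bounded \emph{above} by a multiple of $Z(\m)$ on its $0$-part. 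But $\env_X(K_X)$ is the $X$-nef envelope of the Weil divisor $K_X$, hence $\env_X(K_X) = \env_X(K_X)^0 + \env_X(K_X)^{X\backslash 0}$, and by the negativity lemma (Proposition 2.12) together with Theorem 3.5, any $X$-nef $b$-divisor that is $\leq 0$ and has a nonzero valuation centered at $0$ satisfies $W \leq \epsilon Z(\m)$ (Corollary 3.6). The subtlety is that $\env_X(K_X)$ is typically not $\leq 0$; so I would instead observe that $\env_X(K_X) - \bar{D}$ is $X$-nef and $\leq 0$ for a suitable auxiliary Cartier divisor $\bar D$ making $K_X - D$ anti-effective near $0$, apply Corollary 3.6 to that difference, and absorb $\bar D$ (which is a genuine Cartier $b$-divisor, hence bounded relative to $Z(\m)$ on models over the blow-up of $\m$).

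The main obstacle I anticipate is precisely this last maneuver: making rigorous the claim that the $0$-part of $\env_X(K_X)$ is bounded above by $\epsilon Z(\m)$ without already assuming $\AX^+\geq 0$. The hypothesis $(\AX^+)^{X\backslash 0}\geq 0$ controls the off-center behaviour of $\env_X(K_X)$ but says nothing directly at $0$, so the boundedness at $0$ has to come purely from Izumi's theorem (Theorem 3.3) and its $b$-divisor version (Theorem 3.5): once one knows $\env_\pi(K_X)_\pi$ on a single model $X_\pi$ dominating the normalized blow-up of $\m$ is $\leq \epsilon Z(\m)_\pi + (\text{bounded Cartier part})$, the negativity lemma propagates the estimate to all models. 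I would therefore structure the proof as: (i) reduce to the $0$-part; (ii) handle $K_\X+1_{\X/X}$ via Izumi on the blow-up of $\m$; (iii) handle $-\env_X(K_X)$ by subtracting a Cartier $b$-divisor to land in the situation of Corollary 3.6 and then re-adding it; (iv) combine to get the constant $c$.
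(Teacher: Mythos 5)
Your reduction (i) and your treatment of $-\env_X(K_X)$ in (iii) are sound (the negativity lemma gives $\env_X(K_X)\leq \overline{\env_X(K_X)_\pi}$ directly, with no need for Corollary 3.6), but step (ii) rests on a false premise and this is a genuine gap. You claim that $K_\X + 1_{\X/X}$ ``has only finitely many components with negative coefficient over any model, all of bounded size,'' so that its part centered at $0$ is bounded below on its own. Only the \emph{trace} on a fixed model has finitely many components; the $b$-divisor itself assigns a coefficient to every divisorial valuation, and $\ord_E(K_\X+1_{\X/X})$ tends to $-\infty$ along towers of blow-ups over $0$ whenever the singularity is bad. Already for a $\Q$-Gorenstein isolated non-lc singularity (where the hypothesis $(\AX^+)^{X\backslash 0}\geq 0$ holds), $\AX^+$ is the classical log-discrepancy $b$-divisor with coefficients unbounded below, and since $\env_X(K_X)=\overline{K_X}$ is then a fixed $\Q$-Cartier $b$-divisor, the coefficients of $K_\X+1_{\X/X}$ are unbounded below as well. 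So the two pieces of your decomposition cannot be bounded separately; only their difference satisfies the desired estimate, and the content of the proposition is exactly the linear bound $\geq cZ(\m)$, not a uniform bound on coefficients.

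The missing ingredient is the log canonicity of the snc pair $(X_\pi,E^\pi)$ on a single log resolution $\pi$, which is what replaces your step (ii): it gives
\[
K_\X + 1_{\X/X} \;\geq\; \overline{K_{X_\pi}+E^\pi},
\]
while the negativity lemma gives $\env_X(K_X)\leq \overline{\env_X(K_X)_\pi}$. Adding these yields
\[
\AX^+ \;\geq\; \overline{K_{X_\pi}+E^\pi-\env_X(K_X)_\pi} \;=\; \overline{(\AX^+)_\pi},
\]
and by the hypothesis $(\AX^+)^{X\backslash 0}\geq 0$ the negative part $C$ of the trace $(\AX^+)_\pi$ is supported over $0$, so the single Cartier $b$-divisor $\overline{C}$ is bounded below by a multiple of $Z(\m)$ (this is where an Izumi-type comparison is legitimately used). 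This one-line inequality chain on a fixed log resolution is the paper's proof; your outline never invokes the log canonicity of $(X_\pi,E^\pi)$, and without it the bound on the $K_\X+1_{\X/X}$ part cannot be obtained.
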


\begin{proof}
Let $\pi$ be a log resolution of $X$.
Since the negative part $C$ of $K_{X_\pi} + E^\pi - \env_X(K_X)_\pi $ is centered at $0$
and $\bar{C}$ is bounded below, it is enough to show $\AX^+ \geq \bar{C}$.
By similar argument in proof of Proposition 4.5, we have
\begin{eqnarray*}
\AX^+ &=& K_\X + 1_{\X/X} - \env_X(K_X) \\
&\geq& K_\X + 1_{\X/X} - \bar{K_{X_\pi} + E~\pi} + \bar{K_{X_\pi} + E^\pi - \env_X (K_X)_\pi} \\
&\geq& \bar{C}
\end{eqnarray*}
\end{proof}

\begin{prop}
Let $(X , 0)$ , $(Y , 0)$ , $f : (Y , 0) \to (X , 0)$ and $\tilde{d}$ be as in Convention 3.9,
then we have $\AY^+ \leq f^* \AX^+$.
In particular if $\AX^+$ and $\AY^+$ are bounded below,
then we have $\tilde{d} \cdot \Vol^+ \leq \mathrm{Vol}^+ (Y , 0) $.
\end{prop}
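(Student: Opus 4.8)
The plan is to compare the log-discrepancy $b$-divisors on the two sides directly, exploiting the compatibility of nef envelopes with the finite pullback established in Proposition 3.11, and then to translate the $b$-divisor inequality into the volume inequality via Proposition 3.12. First I would recall the Riemann--Hurwitz-type relation for the canonical $b$-divisors: since $f : Y \to X$ is finite and we work with the canonical $b$-divisors $K_\Y$ and $K_\X$, there is a ramification $b$-divisor $R_{\Y/X} \geq 0$ with $K_\Y = f^* K_\X + R_{\Y/X}$ (the trace of this on any high enough model is the usual ramification divisor of the induced morphism of smooth models). Likewise, because $f^{-1}(0) = \{0\}$ and $f$ is finite, an exceptional prime divisor on a model of $Y$ maps to an exceptional prime divisor on a model of $X$, so $1_{\Y/Y} \leq f^* 1_{\X/X} + R_{\Y/X}$ after possibly comparing on a common model (the exceptional divisors of $Y$ are among the pullbacks of the exceptional divisors of $X$ together with divisors appearing in the ramification locus, and the discrepancy from the ramification is exactly $R_{\Y/X}$). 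I would make this precise on a fixed log resolution dominating both $X$ and $Y$.

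Next, using Proposition 3.11 we have $\env_Y(K_Y) = \env_Y(f^* K_X + \text{(divisorial part of }R\text{)}) $; more usefully, since $\env_X(K_X)$ is the largest $X$-nef $b$-divisor with $X$-trace $\leq K_X$ and its pullback $f^*\env_X(K_X)$ is $Y$-nef with $Y$-trace $\leq f^*K_X \leq K_Y - R_{\Y/Y}^{\mathrm{div}}$, combining the two displays above yields
\[
\AY^+ = K_\Y + 1_{\Y/Y} - \env_Y(K_Y) \leq f^*\big(K_\X + 1_{\X/X} - \env_X(K_X)\big) = f^* \AX^+ ,
\]
where the key input is precisely the identity $\env_Y(f^*W) = f^*\env_X(W)$ from Proposition 3.11 applied with $W = \env_\X(K_X)$, together with the superadditivity of nef envelopes (Proposition 2.10) to absorb the ramification contribution. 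I would be careful that $\env_Y(K_Y) \geq f^*\env_X(K_X) + (\text{nef part coming from }R)$, so that the sign works out in the direction claimed.

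For the volume statement, assume now $\AX^+$ and $\AY^+$ are both bounded below, so both isolated volumes are defined (Definition 4.7, Remark 4.8). From $\AY^+ \leq f^*\AX^+$ and the fact that $f^*\AX^+$ is bounded below on $Y$ (pullback of a bounded-below $b$-divisor along a finite morphism with $f^{-1}(0) = \{0\}$ is bounded below), monotonicity of the nef envelope gives $\env_\Y(\AY^+) \leq \env_\Y(f^*\AX^+)$, and by Proposition 3.11 the latter equals $f^*\env_\X(\AX^+)$. Both of these are $Y$-nef $b$-divisors over $0$ and $\leq 0$, so taking $n$-th self-intersections and using the monotonicity of intersection numbers (Proposition 3.8 (1)) together with Proposition 3.12 we obtain
\[
\mathrm{Vol}^+(Y,0) = -\big(\env_\Y(\AY^+)\big)^n \geq -\big(f^*\env_\X(\AX^+)\big)^n \geq \tilde{d}\cdot\Big(-\big(\env_\X(\AX^+)\big)^n\Big) = \tilde{d}\cdot\Vol^+ .
\]
The main obstacle I anticipate is the first step: justifying the $b$-divisor inequalities $K_\Y \geq f^*K_\X$ and $1_{\Y/Y} \leq f^*1_{\X/X} + R_{\Y/X}$ cleanly at the level of $b$-divisors (not just on one model), i.e. checking that for every prime divisor $E$ over $Y$ the orders behave correctly — this is where one must use $f^{-1}(0)=\{0\}$ and the structure of the induced finite morphisms between models to control how exceptional and ramification divisors interact. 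Everything after that is a formal manipulation using Propositions 2.10, 3.8, 3.11, and 3.12.
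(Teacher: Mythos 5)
Your argument is correct and follows essentially the same route as the paper: the Riemann--Hurwitz identity $K_{\Y} + 1_{\Y/Y} - f^*(K_{\X} + 1_{\X/X}) = R_f$ (strict transform of the ramification divisor), superadditivity of nef envelopes together with $\env_Y(f^*K_X) = f^*\env_X(K_X)$ to obtain $\env_Y(K_Y) - f^*\env_X(K_X) \geq \env_Y(R_{f}) \geq R_f$, and then monotonicity (Proposition 3.8) plus Proposition 3.12 for the volume inequality. The only slip is the intermediate claim $1_{\Y/Y} \leq f^*1_{\X/X} + R_{\Y/X}$, which would double-count the ramification once added to $K_{\Y} = f^*K_{\X} + R_{\Y/X}$; your own observation that exceptional divisors over $Y$ map to exceptional divisors over $X$ yields the sharper $1_{\Y/Y} \leq f^*1_{\X/X}$, which is what the computation actually requires.
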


\begin{proof}
Since the ramification divisor is the difference of the canonical divisor of the domain and the pullback of canonical divisor of the target,
we have 
\[
K_\Y + 1_{\Y/Y} - f^* (K_\X + 1_{\X/X}) = R_f
\]
where $R_f$ is the $b$-divisor defined by the strict transforms of ramification divisor of $f$.
So we have
\begin{eqnarray*}
f^*\AX^+ - \AY^+ &=& -R_f - f^* \env_X (K_X) + \env_Y (K_Y) \\
&\geq& -R_f + \env_Y (R_f) \\
&\geq& 0
\end{eqnarray*}
where second inequality followed by
\begin{eqnarray*}
\env_Y (K_Y) &=& \env_Y (K_Y - f^*K_X + f^*K_X ) \\
&\geq& \env_Y (R_f) + \env_Y (f^*K_X) \\
&=& \env_Y (R_f) + f^* \env_X (K_X)
\end{eqnarray*}
and third inequality follows from 
the maximality of nef envelope of $R_f \geq0$, $0$ is a $X$-nef $b$-divisor and 
$\env_Y((R_{f,Y})_Y = R_{f,Y}$

Using Proposition 3.12 and Proposition 3.8 , we have
\[
(\AY^+)^n \leq (f^* \AX^+)^n \leq \tilde{d} \cdot (\AX^+)^n
\]
Thus we see the result.
\end{proof}

\begin{thm}[\textup{Theorem 1.3}]
Let $(R , \m , k)$ be a normal local ring essentially of finite type over $\C$ and 
we set $X := \Spec R$ and we denote the unique closed point $0$.
Assume $X$ has an endomorphism $f : X \to X$ with
$f^{-1}(0) = {0} $ and $\tilde{d} = \displaystyle{ \frac{\mathrm{deg}(f)}{[f_* k : k]}} > 1$.
Furthermore, we assume $X \backslash 0$ has valuative log canonical singularities.\newline
Then $X$ has valuative log canonical singularities.

\end{thm}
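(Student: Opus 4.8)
The strategy is to argue by contradiction using the isolated volume $\Vol^+$ and its behavior under the finite map $f$. Suppose $X$ does not have valuative log canonical singularities. First I would observe that since $X\setminus 0$ has valuative log canonical singularities, the non valuative log canonical set $\mathrm{Nvlc}$ is contained in $\{0\}$; combined with Proposition 4.8 this shows $(\AX^+)^{X\setminus 0}\geq 0$, hence $\AX^+$ is bounded below by Proposition 4.8, and therefore $\env_\X(\AX^+)$ and $\Vol^+$ are defined and finite by Remark 4.7. By Proposition 4.9, the failure of valuative log canonicity is equivalent to $\Vol^+ > 0$, i.e. $\Vol^+\in(0,\infty)$.

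Next I would run the same analysis for $Y=X$ with the finite morphism $f$ playing the role of $f:(Y,0)\to(X,0)$ in Convention 3.9. Here the point is that since $X\setminus 0$ has valuative log canonical singularities and $f^{-1}(0)=\{0\}$, the preimage $Y\setminus 0$ also has valuative log canonical singularities, so $\AY^+=\AX^+$ is likewise bounded below and $\mathrm{Vol}^+(Y,0)=\Vol^+$. Applying Proposition 4.10 with this $f$ gives
\[
\tilde{d}\cdot \Vol^+ \leq \mathrm{Vol}^+(Y,0) = \Vol^+.
\]
Since $0 < \Vol^+ < \infty$ we may divide, obtaining $\tilde{d}\leq 1$. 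But $\tilde{d} = \deg(f)/[f_*k:k] > 1$ by hypothesis, a contradiction. Hence $\AX^+\geq 0$, i.e. $X$ has valuative log canonical singularities.

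The main obstacle, and the place where care is needed, is the identification $\AY^+ = \AX^+$ and $\mathrm{Vol}^+(Y,0)=\Vol^+$ coming from $Y=X$: one must make sure that the $b$-divisor formalism and the definition of $\mathrm{Vol}^+$ are genuinely intrinsic to the scheme $(X,0)$ and do not secretly depend on the base field, so that the only asymmetry between source and target in Proposition 4.10 is recorded by the factor $\tilde d$ (which accounts for the field extension $[f_*k:k]$). Once this bookkeeping is in place — and the fact that $X\setminus 0$ valuative log canonical forces $\AX^+$ bounded below via Proposition 4.8, so that everything is finite and Proposition 4.9 applies — the argument is a short contradiction. A secondary point to check is that Proposition 4.10 is being applied to an endomorphism with possibly nontrivial residue field extension, but this is exactly what Convention 3.9 and the definition of $\tilde d$ were set up to handle.
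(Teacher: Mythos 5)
Your proposal is correct and is essentially the paper's own argument: establish that $\AX^+$ is bounded below from the punctured valuative log canonicity, so that $\Vol^+$ is a finite nonnegative number, then apply the volume comparison under $f$ to get $\tilde{d}\cdot\Vol^+\le\Vol^+$, forcing $\Vol^+=0$ and hence $\AX^+\ge 0$. The only difference is cosmetic (you phrase it as a contradiction rather than concluding $\Vol^+=0$ directly), and your care about the self-map bookkeeping $Y=X$ and the factor $[f_*k:k]$ matches what Convention 3.9 is designed to handle.
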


\begin{proof}
First, $\AX^+$ is bounded below by Prop 4.9 and the assumption $(\AX^+)^{X \backslash 0} \geq 0$.
Thus we can apply Proposition 4.10 and we have 
\[
\tilde{d} \cdot \Vol^+ \leq \Vol^+.
\]
Since $\Vol^+$ is a finite positive number and $\tilde{d} > 1$,
we have $\Vol^+ = 0$.
This theorem followed from the above result and Proposition 4.8.
\end{proof}

\section{Proof of Main theorem}
In this section, we prove the following theorem.
Combining Theorem 5.1 and Theorem 4.10, we have an affirmative answer to \cite[Conjecture 1.5]{BH}.
Moreover, we see the Theorem 1.4 as the direct corollary of Theorem 5.1.

\begin{thm}[\textup {cf. \cite[Theorem 1.2]{BH}}]
Let $X$ be a normal complex variety over $X$ admitting an endomorphism $f : X \to X$.
Let $Z$ be an irreducible component of non valuative log canonical locus.\newline
Then up to replacing $f$ by some iterate, $Z$ is totally invariant, 
and the induced endomorphism satisfies 
\[
\deg (f |_Z) = \deg (f).
\]

\end{thm}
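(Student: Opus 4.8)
The plan is to combine the dynamical positivity of a polarized endomorphism with the local rigidity result of Theorem 1.3 (Theorem 4.11), applied at the generic point of a component of the non-valuative log canonical locus. First I would recall that $\mathrm{Nvlc}(X)$ is a Zariski-closed subset by Proposition 4.6, so its irreducible components make sense; let $Z$ be one of them, with generic point $\eta$. The first step is to show that $f^{-1}(\mathrm{Nvlc}(X)) = \mathrm{Nvlc}(X)$ as sets. The inclusion $f^{-1}(\mathrm{Nvlc}(X)) \supseteq \mathrm{Nvlc}(X)$ (on closed points, say) should follow from the fact that valuative log canonicity is preserved under the pullback $f^*$: by Proposition 4.10 (Proposition 4.10 in the excerpt, $\AY^+ \le f^*\AX^+$ applied locally at points of $X$), if a point of the source had lc-type local ring, then so would its image — equivalently, $f(X \setminus \mathrm{Nvlc}) \subseteq X \setminus \mathrm{Nvlc}$, which after checking surjectivity of $f$ and that $f$ is finite gives the reverse. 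For the other inclusion one uses that $f$ being polarized and non-invertible forces $f^*\mathcal{O}_X(1) \cong \mathcal{O}_X(q)$ with $q > 1$, so $f$ is finite of degree $q^{\dim X} > 1$; then a standard argument shows the preimage of a closed invariant-type set cannot be strictly smaller, because $f$ maps $\mathrm{Nvlc}$ into itself and iterating $f$ on the finitely many components forces, after replacing $f$ by an iterate, each component of $\mathrm{Nvlc}$ to map onto a component of $\mathrm{Nvlc}$, hence (counting components) to be totally invariant.

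The second step is the combinatorial reduction on components. Since $\mathrm{Nvlc}(X)$ has only finitely many irreducible components and $f$ permutes them (because $f$ is a finite surjective morphism preserving $\mathrm{Nvlc}$ and its image of an irreducible closed set is irreducible closed of the same dimension), replacing $f$ by a suitable iterate $f^N$ we may assume every component is mapped to itself; in particular $Z$ is totally invariant, i.e.\ $f^{-1}(Z) = Z$ as sets and $f(Z) = Z$. This gives the first assertion of the theorem.

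The third step is the degree computation, which is the heart of the argument and the step I expect to be the main obstacle. With $Z$ totally invariant, $f$ induces a finite injective local homomorphism $\phi\colon \mathcal{O}_{X,\eta} \to \mathcal{O}_{X,\eta}$ on the local ring at the generic point of $Z$; here $\mathcal{O}_{X,\eta}$ is a normal local ring essentially of finite type over $\mathbb{C}$, its closed point is the only point lying over itself, and $X \setminus \{\eta\}$ locally — more precisely $\Spec \mathcal{O}_{X,\eta} \setminus \{\mathfrak{m}_\eta\}$ — has valuative log canonical singularities precisely because $\eta$ is a \emph{generic} point of a \emph{component} of $\mathrm{Nvlc}$, so all strictly smaller-dimensional centers of non-lc discrepancy have been removed. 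Now Theorem 4.11 applies in its contrapositive form: since $\Spec \mathcal{O}_{X,\eta}$ does \emph{not} have valuative log canonical singularities (that is the definition of $\eta$ lying in $\mathrm{Nvlc}$), the hypothesis $\tilde d = \deg(\phi)/[\phi_* \kappa(Z):\kappa(Z)] > 1$ must fail, so $\deg(\phi) = [\phi_*\kappa(Z):\kappa(Z)]$. It then remains to identify $\deg(\phi)$ with $\deg(f)$ — which holds because the degree of a dominant finite morphism equals the degree of the induced extension of local rings at the generic point of the source, $\deg(f) = [\kappa(X):f^*\kappa(X)] = [\mathcal{O}_{X,\eta}:\phi(\mathcal{O}_{X,\eta})] = \deg(\phi)$ — and to identify $[\phi_*\kappa(Z):\kappa(Z)]$ with $\deg(f|_Z)$, which is the statement that the residue field extension at $\eta$ is exactly the function field extension of $Z$ induced by $f|_Z$. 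Assembling these identifications yields $\deg(f|_Z) = \deg(f)$, as claimed. The delicate points to get right are: (i) verifying that $\Spec\mathcal{O}_{X,\eta}\setminus\{\mathfrak m_\eta\}$ is valuative lc, which requires that valuative log canonicity localizes well and that no smaller center obstructs it once $Z$ is a component; and (ii) the clean matching of the three degrees, which is essentially bookkeeping about generic fibers of $f$ versus $f|_Z$ but must be done carefully since residue-field degrees enter Theorem 4.11 explicitly.
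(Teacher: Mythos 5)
Your proposal follows essentially the same route as the paper: the pullback inequality $\AY^+ \le f^*\AX^+$ of Proposition 4.10 shows $f^{-1}(\mathrm{Nvlc}) \subseteq \mathrm{Nvlc}$ (note the direction --- this is what ``$f$ maps the lc locus into the lc locus'' translates to, not the reverse inclusion you write), a counting argument on the finitely many components (the paper phrases it as induction on codimension) yields total invariance after passing to an iterate, and localizing at the generic point of $Z$ and applying Theorem 4.11 in contrapositive forces $\tilde{d}=1$, whence $\deg(f)=\deg(f|_Z)$ after the bookkeeping identifications you describe. The one thing to drop is the appeal to polarization in your first step: Theorem 5.1 assumes only a (finite surjective) endomorphism, and no positivity is needed there --- the permutation of components follows from $f^{-1}(\mathrm{Nvlc}) \subseteq \mathrm{Nvlc}$ together with finiteness and surjectivity of $f$ alone, exactly as in the paper.
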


\begin{proof}
We set the non valuative log canonical locus 
\[
\mathrm{Nvlc} :=\bigcup_{\ord_E (\AX^+) < 0} \mathrm{c}_X (E) ,
\]
and first, $\mathrm{Nvlc}$ is a closed subset of $X$ by Proposition 4.5,
thus it has finite irreducible components.
We prove the first statement by the induction on codimension of $Z$.
It is clear if $\mathrm{codim}Z = 0$ since $\mathrm{Nvlc} \neq X$.

\begin{cl}
$f^{-1}(Z)$ is an irreducible component of $\mathrm{Nvlc}$ up to replacing $f$ some iterate.
\end{cl}

\begin{clproof}
We take the generic point $\eta$  of $Z$ and any prime divisor $E$ over $X$ centered at $\eta$ with 
$\mathrm{ord}_E (\AX^+) < 0$.
We denote 
\[
f^* E = r_1 F_1 + \cdots + r_m F_m
\]
where $r_i$ is the ramification index of $F_i$,
then we have
\[
\ord_{F_i}(\AX^+) \leq \ord_{F_i}(f^* \AX^+) = r_i \ord_E (\AX^+) <0
\]
So  $f^{-1}(Z) = \bigcup \bar{\mathrm{c}_X(F_i)}$ is contaied in $\mathrm{Nvlc}$.
Suppose $f^{-1}(Z) $ is not irreducible component of $\mathrm{Nvlc}$, 
then we can take $Z'$ the irreducible component of $\mathrm{Nvlc}$ containing $f^{-1}(Z)$.
Using induction hypothesis, we can assume $Z'$ is totally invariant of $f$.
Since $f(Z') = Z'$ and $f^{-1}(Z) \subset Z'$ , $Z'$ is an irreducible component containing $Z$.
But $Z$ is also an irreducible component, so it is a contradiction.
\end{clproof}

By claim, we get a one-to-one correspondence of irreducible components having same dimension as 
$Z$ by $f^{-1}$.
Hence replacing $f$ some iterate, $Z$ is a totally invariant set of $f$.

Next, we prove the second statement by reducing to the local case.
Let $\eta$ be the generic point of $Z$ and $f_\eta : (X_\eta , \eta) \to (X_\eta , \eta ) $ be the localization of $f$.

Since $X_\eta \backslash \{ \eta \}$ has valuative log canonical singularities 
and $X_\eta$ is not valuative log canonical,
it follows that $\tilde{d}$ is equal to $1$ from Theorem 4.10.
Here, we have
\[
\deg (f_\eta) = \deg (f) = [{f_\eta}_* \kappa (Z) : \kappa (Z)]
\]
but $[{f_\eta}_* \kappa (Z) : \kappa (Z)]$ is nothing but $\deg (f |_Z)$.
Thus we conclude
\[
\deg (f |_Z) = \deg (f).
\]
\end{proof}

As the direct corollaries, we get the following two results.

\begin{thm}
Let $X$ be a normal complex projective variety over $k$ admitting a non-invertible polarized endomorphism.
Then we have $X$ has valuative log canonical singularities.
In particular if $X$ has the log canonical model, then it is small.
\end{thm}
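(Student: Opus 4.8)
The plan is to deduce the global statement from the local Theorem 1.3 (Theorem 4.11 in the text) via the dichotomy set up in Theorem 5.1. Suppose, for contradiction, that $X$ does not have valuative log canonical singularities, i.e. $\mathrm{Nvlc}(X) \neq \emptyset$. By Proposition 4.5 this is a Zariski closed proper subset, so it has finitely many irreducible components; pick one such component $Z$. Applying Theorem 5.1 to the polarized endomorphism $f : X \to X$, after replacing $f$ by a suitable iterate we may assume $Z$ is totally invariant and that the induced map $f|_Z : Z \to Z$ satisfies $\deg(f|_Z) = \deg(f)$.

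The key observation is that this last equality is impossible for a non-invertible polarized endomorphism. Indeed, let $H$ be an ample divisor on $X$ with $f^* H \equiv q H$ for some integer $q > 1$ (this is the definition of polarized, and non-invertibility forces $q \geq 2$). Then $\deg(f) = q^{\dim X}$. On the other hand $f|_Z$ is a polarized endomorphism of $Z$ with respect to $H|_Z$, so $\deg(f|_Z) = q^{\dim Z}$. Since $Z \subsetneq X$ is a proper subvariety (as $\mathrm{Nvlc}(X) \neq X$, using that $X$ itself — or rather its smooth locus — is valuative log canonical at the generic point), we have $\dim Z < \dim X$, whence $\deg(f|_Z) = q^{\dim Z} < q^{\dim X} = \deg(f)$, a contradiction. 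Therefore $\mathrm{Nvlc}(X) = \emptyset$, which is exactly the assertion that $X$ has valuative log canonical singularities.

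For the ``in particular'' clause: if $X$ additionally admits a log canonical model $\mu : Y \to X$, then by the equivalence (1) $\Leftrightarrow$ (3) in Theorem 4.4 the fact that $X$ has valuative log canonical singularities is equivalent to $\mu$ being an isomorphism in codimension one, i.e. small. This also settles Conjecture 1.1.

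The main obstacle is really upstream of this short argument: it lies in establishing Theorem 5.1 (the reduction to the generic point of a component of $\mathrm{Nvlc}$, including the totally-invariant claim) and Theorem 4.11 (the local statement, whose proof rests on the inequality $\tilde{d} \cdot \mathrm{Vol}^+(X,0) \leq \mathrm{Vol}^+(Y,0)$ from Proposition 4.10 together with the finiteness and positivity of the isolated volume). Granting those, the deduction here is purely formal: one only needs the elementary numerology $\deg(f) = q^{\dim X}$ for a polarized endomorphism and the monotonicity of $\dim$ under passing to a proper invariant subvariety.
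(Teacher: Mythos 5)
Your proposal is correct and follows essentially the same route as the paper: assume $\mathrm{Nvlc}\neq\emptyset$, apply Theorem 5.1 to an irreducible component $Z$ to get $\deg(f|_Z)=\deg(f)$, and contradict this with $\deg(f)=q^{\dim X}$ versus $\deg(f|_Z)=q^{\dim Z}$ coming from $f^*H\equiv qH$ and $\dim Z<\dim X$. The ``in particular'' clause via Theorem 4.4 (1)$\Leftrightarrow$(3) is also how the paper handles it, so there is nothing to add.
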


\begin{cor}
Let $X$ be as in Theorem 5.2.
Then there exists a normal affine cone  $\tilde{X}$ of $X$ such that 
$\tilde{X}$ has the valuative log canonical singularities.
In particular if $\tilde{X}$ has the log canonical model, then it is small.
\end{cor}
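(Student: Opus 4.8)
The plan is to realize $\tilde X$ as a suitable affine cone over $X$ on which $f$ lifts to a finite endomorphism fixing the vertex, to observe that $\tilde X$ has valuative log canonical singularities away from the vertex because there it is smooth over $X$, and then to apply the local Theorem 4.11 at the vertex. Concretely, let $f\colon X\to X$ be the given non-invertible polarized endomorphism and fix an ample line bundle $L$ on $X$ with $f^{*}L\cong L^{\otimes q}$, $q\ge 2$. Replacing $L$ by a sufficiently divisible power (which preserves the relation $f^{*}L\cong L^{\otimes q}$ with the same $q$), we may assume, by Serre vanishing, that the section ring $S:=\bigoplus_{m\ge 0}H^{0}(X,L^{\otimes m})$ is a normal finitely generated $\C$-algebra; set $\tilde X:=\Spec S$, a normal affine cone over $X$, with vertex $0\in\tilde X$ corresponding to the maximal ideal $\m=\bigoplus_{m>0}S_{m}$. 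Pullback of sections, $s\mapsto f^{*}s\in H^{0}(X,f^{*}L^{\otimes m})=H^{0}(X,L^{\otimes qm})$, defines an injective ring homomorphism $\phi\colon S\to S$ multiplying degrees by $q$; as is standard, $\phi$ is module-finite and the induced morphism $\tilde f:=\Spec\phi\colon\tilde X\to\tilde X$ is finite and surjective with $\tilde f^{-1}(0)=\{0\}$, and $\tilde f$ is non-invertible (it covers the non-invertible $f$), in fact $\deg\tilde f=q\cdot\deg f$.

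The complement $\tilde X\setminus\{0\}$ is a $\mathbb{G}_{m}$-bundle over $X$, so the projection $\tilde X\setminus\{0\}\to X$ is smooth. By Theorem 5.2, $X$ has valuative log canonical singularities, and valuative log canonical singularities are preserved under smooth morphisms — the canonical $b$-divisor, the exceptional term $1_{\X/X}$, and the nef envelope $\env(K)$ are all compatible with smooth base change, exactly as $\env$ is with finite base change in Proposition 3.11 — so $\tilde X\setminus\{0\}$ has valuative log canonical singularities. In particular the punctured germ $\Spec\sO_{\tilde X,0}\setminus\{\m\}$ has valuative log canonical singularities.

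Now set $R:=\sO_{\tilde X,0}$. Since $X$ is a complex projective variety, $S_{0}=H^{0}(X,\sO_{X})=\C$, so $R$ is a normal local ring essentially of finite type over $\C$ with residue field $\C$, and $\tilde f$ induces a finite injective local homomorphism $R\to R$ (localness from $\tilde f^{-1}(0)=\{0\}$) with $\tilde d=\deg\tilde f/[\phi_{*}\C:\C]=\deg\tilde f>1$. By the previous paragraph and Theorem 4.11, $\Spec R$ has valuative log canonical singularities, i.e. $\tilde X$ is valuative log canonical at $0$; together with the previous paragraph this shows $\tilde X$ has valuative log canonical singularities. The final assertion — that if $\tilde X$ admits the log canonical model then it is an isomorphism in codimension one — is then immediate from Theorem 4.4, the implication $(1)\Rightarrow(3)$.

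The only non-formal point is the compatibility of valuative log canonicity with the smooth morphism $\tilde X\setminus\{0\}\to X$, equivalently the inclusion $\mathrm{Nvlc}(\tilde X)\subseteq\{0\}$; this rests on nef envelopes of canonical classes behaving well under smooth base change (so that the log-discrepancy $b$-divisor of $\tilde X\setminus\{0\}$ is the smooth pullback of that of $X$), which can be verified by the same arguments used for finite morphisms in Section 3 or extracted from \cite{BdFF}. The remaining ingredients — normality of the cone after replacing $L$ by a sufficiently divisible power, and the module-finiteness of the lifted $\tilde f$ with $\tilde f^{-1}(0)=\{0\}$ — are standard facts about cones over polarized varieties.
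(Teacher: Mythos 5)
Your proposal is correct and follows essentially the same route as the paper: form the normal affine cone of a sufficiently positive $f$-linearized ample bundle, observe that Theorem 5.2 together with the smoothness of $\tilde{X}\setminus\{0\}\to X$ gives valuative log canonicity away from the vertex, and then apply the local theorem (Theorem 4.11, with $\tilde{d}=\deg\tilde{f}>1$ since the residue field at the vertex is $\C$) to conclude at the vertex, with the last assertion following from Theorem 4.4. The paper's own proof is just a terser version of this (it also asserts without proof that $\tilde{X}\setminus\{0\}$ is valuative log canonical), so your explicit flagging of the smooth-descent step matches, and slightly exceeds, its level of detail.
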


\begin{proof}

From now, $X$ is as in Theorem 5.2 and $f : X \to X$ is a non-invertible polarized endomorphism.
We can take an ample divisor $H$ on $X$ such that 
$ f^*H = qH$ for some positive integer $q$ and the affine cone $\tilde{X}$ associated to $H$ is normal and we denote vertex of $\tilde{X}$ by $0$.

Suppose 
$\mathrm{Nvlc} \neq \emptyset$.
We take an irreducible component $Z$, 
then we may assume $Z$ is totally invariant of $f$ and $\deg (f |_Z) = \deg (f)$
by Theorem 5.1.
Since $f^*H = qH$, We get next two equations
\[
\deg(f) \cdot H^{\dim X} = (f^* H )^{\dim X} = q^{\dim X} \cdot H^{\dim X}
\]
\[
\deg(f|_Z) \cdot (H|_Z)^{\dim Z} = ({f|_Z}^* {H|_Z})^{\dim Z} = q^{\dim Z} \cdot (H|_Z)^{\dim Z}
\]
However it contradicts to $\deg (f |_Z) = \deg (f)$.

Finally, we prove Corollary 5.3.
Theorem 5.2 implies that $\tilde{X} \backslash 0$ has valuative log canonical singularities.
If $\tilde{X}$ is not valuative log canonical, then $\deg (f) = \deg(f |_{ \{0\} })$,
but it is contradiction.

\end{proof}

\begin{rem}
In \cite{SS}, if $X$ is of log Calabi-Yau type, then every affine cone have log canonical singularities
in the sense of \cite{dFH}, in particular it has valuative log canonical singularities.
Therefore, Corollary 5.3 gives a weakly type of an affirmative answer to Conjecture 1.5.
\end{rem}

\section{Numerically $\Q$-Cartier divisors and other applications of intersection numbers}

In this section, we consider valuative properties of numerically $\Q$-Cartier divisors.
Furthermore, we show that normal projective varieties admitting \'etal in codimension one polarized endomorphism 
are $\Q$-Gorenstein and the affine cone is also $\Q$-Gorenstein.
If $X$ is $\Q$-Gorenstein, it is well known.

\subsection{Numerically $\Q$-Cartier divisors}

In this subsection, We survey basic properties of numerically $\Q$-Cartier divisors. 
See \cite{BdFFU} for the details.
The purpose of this subsection is to prove the following proposition.

\begin{prop}[\textup{\cite[Proposition 5.9]{BdFFU}}]
Let $D$ be a $\Q$-Weil divisor on $X$.
The followings are equivalent
\begin{enumerate}
\item
$D$ is numerically $\Q$-Cartier divisor.

\item
$\env_X(D) + \env_X(-D) =0$
\end{enumerate}
\end{prop}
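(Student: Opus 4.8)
The plan is to unwind the definition of numerically $\Q$-Cartier and reformulate it in the language of $b$-divisors that has been developed in the preceding sections. Recall from \cite{BdFFU} that a $\Q$-Weil divisor $D$ on $X$ is \emph{numerically $\Q$-Cartier} exactly when the ``numerical pullback'' of $D$ to every birational model is an $\R$-Cartier divisor whose traces form a Cartier $b$-divisor; concretely, this amounts to the positive and negative nef envelopes $\env_X(D)$ and $-\env_X(-D)$ agreeing as $b$-divisors, since $\env_X(D)$ is the largest $X$-nef $\R$-Weil $b$-divisor with trace $\le D$ on $X$ (Remark~2.15) and $-\env_X(-D)$ is the smallest such with trace $\ge D$. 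So I would first record, for any $\Q$-Weil divisor $D$, the two inequalities $\env_X(D)_X = D$ and $\env_X(-D)_X = -D$ (Example~2.8(2)), together with the general bound $\env_X(D) + \env_X(-D) \le 0$ coming from Proposition~2.10(1) applied to $D + (-D) = 0$ and $\env_X(0) = 0$.

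Next I would prove $(2) \Rightarrow (1)$. Assume $\env_X(D) + \env_X(-D) = 0$, i.e. $-\env_X(-D) = \env_X(D)$. Then the common $b$-divisor $P := \env_X(D) = -\env_X(-D)$ is $X$-nef (as a nef envelope, by Example~2.8(1)) and its negative is also $X$-nef, so on each birational model $\pi$ the trace $P_\pi$ is both $X$-nef and anti-$X$-nef. I would argue that this forces $P_\pi$ to be $\pi$-numerically trivial over $X$, hence (by the usual argument, or directly from the definition in \cite{BdFFU}) that $P$ is the numerical pullback of $D$ and is an $\R$-Cartier, indeed $\Q$-Cartier, $b$-divisor with $P_X = D$; this is precisely the assertion that $D$ is numerically $\Q$-Cartier. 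Conversely, for $(1) \Rightarrow (2)$, if $D$ is numerically $\Q$-Cartier let $P$ denote its numerical pullback $b$-divisor, which by definition is $X$-nef, has $-P$ also $X$-nef, and satisfies $P_X = D$. By the maximality characterization of nef envelopes (Remark~2.15), $X$-nefness of $P$ with $P_X \le D$ gives $P \le \env_X(D)$, and likewise $-P \le \env_X(-D)$; adding these yields $0 \le \env_X(D) + \env_X(-D)$, which combined with the general reverse inequality above gives equality.

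The step I expect to be the main obstacle is the precise matching between the ``numerical pullback'' formulation of numerically $\Q$-Cartier used in \cite{BdFFU} and the nef-envelope formulation: one must check that the numerical pullback $b$-divisor, when it exists, really is $\env_X(D)$ (equivalently $-\env_X(-D)$), and that $X$-nef together with anti-$X$-nef implies numerically trivial on each model and hence genuinely Cartier after clearing denominators. This is where one invokes the negativity lemma (Proposition~2.12) on a suitable model and the finiteness/rationality built into the definition in \cite{BdFFU}; since the proposition is quoted from \cite[Proposition~5.9]{BdFFU}, I would either cite that identification directly or reproduce the short argument that an $X$-nef $b$-divisor $P$ with $-P$ also $X$-nef and rational trace on $X$ is a $\Q$-Cartier $b$-divisor. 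Everything else is a formal manipulation of the inequalities $\env_X(D) + \env_X(-D) \le 0$ and the maximality property of nef envelopes.
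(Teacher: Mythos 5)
Your direction $(1)\Rightarrow(2)$ is correct and is essentially the paper's argument: take the $X$-numerically trivial $\Q$-Cartier divisor $D'$ on a model $\pi$ with $\pi_*D'=D$, observe that $\overline{D'}$ and $-\overline{D'}$ are both $X$-nef with traces $D$ and $-D$ on $X$, use the maximality property of nef envelopes to get $\overline{D'}\le \env_X(D)$ and $-\overline{D'}\le\env_X(-D)$, and combine with the superadditivity bound $\env_X(D)+\env_X(-D)\le\env_X(0)=0$.

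The direction $(2)\Rightarrow(1)$, however, contains a genuine gap exactly at the step you flag as ``the main obstacle'' and then defer. Writing $P:=\env_X(D)=-\env_X(-D)$, you propose to argue that ``on each birational model $\pi$ the trace $P_\pi$ is both $X$-nef and anti-$X$-nef, hence numerically trivial, hence $P$ is a $\Q$-Cartier $b$-divisor.'' This does not typecheck as stated: $P$ is a priori only an $X$-nef $\R$-Weil $b$-divisor (a limit of Cartier $b$-divisors), so its trace $P_\pi$ need not be $\R$-Cartier and ``nef'' is not defined for it; and even granting a sense in which $P_\pi$ is two-sidedly positive, the real content is that $P$ is \emph{determined on a single finite model}, which is not a model-by-model statement. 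You cannot discharge this by citing \cite{BdFFU}, since the proposition \emph{is} \cite[Proposition 5.9]{BdFFU} and the point of this subsection is to reprove it. The paper fills this hole with two ingredients you do not supply: (a) Lemma 6.3 and Corollary 6.4, showing that an $\R$-Weil $b$-divisor is $X$-nef if and only if its trace on every smooth model is $X$-movable, so that on a resolution $X_\pi$ and any higher smooth model $X_{\pi'}\xrightarrow{\mu}X_\pi$ the divisor $\mu^*\env_X(D)_\pi-\env_X(D)_{\pi'}$ is $X_\pi$-movable (here smoothness of $X_\pi$ is what makes $\mu^*\env_X(D)_\pi$ meaningful); and (b) a two-sided application of the negativity lemma (Proposition 2.12), giving both $\overline{\env_X(D)_\pi}-\env_X(D)\le 0$ and $\env_X(D)\le\overline{\env_X(D)_\pi}$, whence $\env_X(D)=\overline{\env_X(D)_\pi}$ is Cartier, determined on $X_\pi$. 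Only after this is $D':=\env_X(D)_\pi$ an honest $\Q$-Cartier divisor for which ``$D'$ and $-D'$ both $X$-nef, hence $D'$ $X$-numerically trivial'' is a legitimate conclusion. Without (a) and (b) your sketch asserts the conclusion of the hard step rather than proving it.
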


We will give a slightly different proof of \cite{BdFFU}.
First we recall the definition of numerically $\Q$-Cartier divisors.

\begin{defn}[\textup{\cite[Definition 2.26]{BdFF}}]
Let $X$ be a normal integral scheme essentially of finite type over a field and $D$ be a $\Q$-Weil divisor.
$D$ is a numerically $\Q$-Cartier divisor if
there exists a birational model $\pi$ and $\Q$-Weil divisor $D'$ on $X_\pi$ such that
$D'$ is a $X$-numerically trivial $\Q$-Cartier divisor and $\pi_* D' = D$.

If $K_X$ is a numerically $\Q$-Cartier divisor, we say $X$ is numerically $\Q$-Gorenstein.

\end{defn}

\begin{lem}
Let $X$ be a normal variety,  $X_\pi$ be a birational model of $X$ and 
$D$ be a $X$-movable divisor on $X_\pi$.
Then we have 
\[
\env_\pi(D)_\pi = D
\]
\end{lem}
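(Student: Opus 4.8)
The plan is to show the two inequalities $\env_\pi(D)_\pi \le D$ and $\env_\pi(D)_\pi \ge D$ separately. The first inequality holds for an arbitrary $\R$-Weil divisor $D$ on $X_\pi$ by Remark 2.14, which establishes $\env_{X_\pi}(D)_\pi \le D$ by the $\ord_E$ computation given there; so nothing new is needed for this direction. The content of the lemma is therefore the reverse inequality $\env_\pi(D)_\pi \ge D$, and this is where the movability hypothesis enters.

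First I would recall what $X$-movable means: after possibly passing to a higher model, $D$ is $X$-linearly equivalent (or at least its relevant multiples are) to divisors whose base locus has codimension $\ge 2$ relative to $\pi$, i.e. for each $m$ sufficiently divisible the sheaf $\pi_*\sO_{X_\pi}(mD)$ generates $\sO_{X_\pi}(mD)$ in codimension one. Concretely, for a prime divisor $E$ on $X_\pi$ one has
\[
\ord_E\bigl(Z(\pi_*\sO_{X_\pi}(mD))\bigr)_\pi = -\min\{\,\ord_E(f)\mid f\in \pi_*\sO_{X_\pi}(mD)\,\} = \ord_E(mD),
\]
the last equality because movability forces the minimum of $\ord_E$ over global sections to be attained exactly at the coefficient of $E$ in $mD$ (there is no extra base contribution along the divisor $E$). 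Dividing by $m$ and taking the supremum over $m$ gives $\ord_E(\env_\pi(D)) \ge \ord_E(D)$ on every prime divisor $E$ of $X_\pi$, which is precisely $\env_\pi(D)_\pi \ge D$. Combined with Remark 2.14 this yields the equality.

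The main obstacle is making the middle equality $\min\{\ord_E(f)\mid f\in\pi_*\sO_{X_\pi}(mD)\} = \ord_E(mD)$ precise: one must check that $X$-movability (a statement about the base locus having codimension $\ge 2$ in the fibers of $\pi$, or equivalently over the generic point of each prime divisor $E$) really does kill the base contribution along every prime divisor, including $\pi$-exceptional ones. I would handle this by localizing at the generic point of $E$, where $\sO_{X_\pi}(mD)$ becomes a free module over a DVR and movability says the stalk of $\pi_*\sO_{X_\pi}(mD)$ surjects onto it there, so the minimal order of vanishing of a section equals the order prescribed by $mD$ itself. A minor technical point is that "movable" may only guarantee this for $mD$ with $m$ in some fixed subsemigroup of $\N$; since $\env_\pi(D) = \sup_m \tfrac1m Z(\pi_*\sO_{X_\pi}(mD))$ ranges over all $m$ and the sequence is (sub)additive, restricting to a cofinal subset of $m$'s does not change the supremum, so this causes no loss.
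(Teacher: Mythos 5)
Your inequality $\env_\pi(D)_\pi \le D$ via Remark 2.14 is fine, and your reduction of the lemma to the reverse inequality is the right framing. The gap is in the reverse direction: you treat ``$X$-movable'' as if it meant that for sufficiently divisible $m$ the map $\pi^*\pi_*\sO_{X_\pi}(mD) \to \sO_{X_\pi}(mD)$ is surjective in codimension one. That is the definition of an $X$-\emph{mobile} (free in codimension one) divisor; the movable cone is the \emph{closure} of the cone spanned by such classes. For a class on the boundary of that cone (and these are exactly the classes the lemma is later applied to --- e.g.\ in Corollary 6.4 and in the proof of Proposition 6.1 the lemma is used for traces of $X$-nef $b$-divisors, which are limits and hence only movable, and may be $\R$-divisors with no integral multiple at all), there need not exist any $m$ with $\min\{\ord_E(f) : f \in \pi_*\sO_{X_\pi}(mD)\} = \ord_E(mD)$: the asymptotic divisorial base order can be strictly positive on the boundary of the movable cone. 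So your localization at the generic point of $E$, where you invoke ``movability says the stalk of $\pi_*\sO_{X_\pi}(mD)$ surjects onto it there,'' is assuming precisely what needs to be proved.

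The paper closes this gap with a perturbation step that is absent from your proposal: for an $X$-ample divisor $H$, the divisor $D + \tfrac1m H$ \emph{is} $X$-mobile, the codimension-one surjectivity argument applies to it verbatim and gives $Z(\pi_*\sO_{X_\pi}(k(D+\tfrac1m H)))_\pi = k(D+\tfrac1m H)$ for suitable $k$, and then one passes to the limit $m \to \infty$ using Corollary 2.11 (continuity of $D' \mapsto \nu(\env_\pi(D'))$ on a finite-dimensional subspace of $\mathrm{Div}_\R(X_\pi)$, coming from concavity) to conclude $\env_\pi(D)_\pi = \lim_m \env_\pi(D+\tfrac1m H)_\pi = D$. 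Your argument as written only proves the lemma for mobile divisors; to repair it you need to add exactly this ample perturbation and continuity step.
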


\begin{proof}
By Corollary 2.11, 
\[
\lim \env_\pi(D + \frac{1}{m} H) = \env_\pi(D)
\]
for any $X$-ample divisor $H$.
So we may assume $D$ is $X$-mobile and for some $m$, the natural map
\[
\pi^* \pi_* \sO_{X_\pi}(mD) \to \sO_{X_\pi}(mD)
\]
is surjective at codimension one point.
It means
\[
Z(\pi_* \sO_{X_\pi}(mD))_\pi = mD
\]
and this lemma follows from this equation.

\end{proof}

\begin{cor}
Let $X$ be a normal variety and $W$ be a $\R$-Weil $b$-divisor.
Then $W$ is $X$-nef $b$-divisor if and only if
$W_\pi$ is a $X$-movable divisor for any smooth model $X_\pi$.
\end{cor}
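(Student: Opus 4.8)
The plan is to prove the two implications separately, using Lemma 6.4 and the negativity lemma (Proposition 2.12) as the main tools. For the forward direction, suppose $W$ is an $X$-nef $\R$-Weil $b$-divisor and let $X_\pi$ be any smooth model. By definition there is a net of $X$-nef $\R$-Cartier $b$-divisors $\{W_i\}_i$ converging to $W$, each determined on some model $\tau_i$. Pulling back to a common refinement $\mu \geq \tau_i, \pi$, the divisor $(W_i)_\mu$ is $X$-nef $\R$-Cartier on $X_\mu$, hence its pushforward to the smooth model $X_\pi$ is $X$-movable (the pushforward of a nef divisor under a proper birational morphism to a smooth variety is movable, since it has no fixed components). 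Taking the limit $i$ and using that $W_{i,\pi} \to W_\pi$ inside a fixed finite-dimensional vector space $V \subset \mathrm{Div}_\R(X_\pi)$, we conclude $W_\pi$ is a limit of $X$-movable divisors; since the movable cone is closed in $V$, $W_\pi$ is $X$-movable.

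For the converse, suppose $W_\pi$ is $X$-movable for every smooth model $X_\pi$. The key point is that, by Lemma 6.4, $\env_\pi(W_\pi)_\pi = W_\pi$ for each such $\pi$, and $\env_\pi(W_\pi)$ is an $X$-nef $\R$-Weil $b$-divisor by Example 2.8(1). I would then show $W = \env_\pi(W_\pi)$ for $\pi$ ranging over a cofinal system of smooth models. Indeed, for $\pi' \geq \pi$ both smooth, the trace identity $\env_{\pi'}(W_{\pi'})_{\pi'} = W_{\pi'}$ together with the negativity lemma forces the net $\{\env_\pi(W_\pi)\}_\pi$ to be decreasing with the correct traces; comparing traces on each smooth model and using that the traces of $W$ and of $\lim_\pi \env_\pi(W_\pi)$ agree on a cofinal family shows the two $b$-divisors coincide. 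Since a cofinal limit of $X$-nef $\R$-Weil $b$-divisors (in the sense of Definition 2.5) is again $X$-nef, $W$ is $X$-nef.

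The main obstacle I anticipate is the converse direction: one must be careful that knowing $W_\pi$ is movable only on \emph{smooth} models still pins down $W$ globally, and that the resulting family $\{\env_\pi(W_\pi)\}$ actually converges to $W$ rather than to something strictly larger. This requires invoking that the birational models dominated by smooth ones are cofinal (resolution of singularities over $\C$) and checking the convergence hypotheses of Definition 2.5 — namely that all traces $\env_\pi(W_\pi)_X$ lie in a common finite-dimensional subspace of $\mathrm{Div}_\R(X)$, which follows since they are all bounded between $W_X$ and the trace of a fixed dominating Cartier $b$-divisor. The forward direction is comparatively routine once one records that pushforward of nef to smooth is movable and that the movable cone is closed.
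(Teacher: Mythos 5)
Your proposal is correct and follows essentially the same route as the paper: the forward direction reduces to nef Cartier $b$-divisors and uses that pushforwards of nef divisors to a smooth model are movable, and the converse applies the preceding lemma to get $\env_\pi(W_\pi)_\pi = W_\pi$ on all smooth models, whence $W = \inf_\pi \env_\pi(W_\pi) = \env_\X(W)$ is $X$-nef. The only difference is that you spell out the limiting/cofinality details that the paper leaves implicit.
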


\begin{proof}
If $W$ is $X$-nef, we may assume $W$ is determined by an birational model $\pi$ 
and $W_\pi$ is $X$-nef.
If the push-forward of a $X$-nef divisor is $\R$-Cartier , then it is $X$-movable.

Next we consider that the condition of $W_\pi$ is a $X$-movable divisor for any smooth model $X_\pi$.
By the previous lemma, 
\[
\env_\pi(W_\pi)_\pi = W_\pi
\]
for any smooth model $\pi$.
In particular
\[
\env_{\X}(W) = \inf \env_\pi(W_\pi) = W
\]
so $W$ is $X$-nef.
\end{proof}

\begin{proof}[Proof of Proposition 6.1]\ 

First we will prove (1) $\Rightarrow$ (2).

By the largest property of nef envelope, if $D'$ is $X$-numerically trivial,
$\bar{D'} \leq \env_X(D)$ and $-\bar{D'} \leq \env_X(-D)$ and we have $\env_X(D) + \env_X(-D) \geq 0$.
In general since $\sO_X(mD) \sO_X(-mD) \subset \sO_X$ and
\[
\env_X(D) + \env_X(-D) = \lim {\frac{1}{m} (Z(\sO_X(mD)) + Z(\sO_X(-mD))}
\]
we have $\env_X(D) + \env_X(-D) \leq 0$.

Next we will prove the converse implication.
Let $X_\pi$ be a resolution of singularities of $X$.
We take a smooth model $X_{\pi'}$ such that $\pi' \geq \pi$ and we set $\mu : X_{\pi'} \to X_\pi$
Since $-\env_X(D)_{\pi'}$ is $X$-movable,
\[
\mu^*\env_X(D)_\pi - \env_X(D)_{\pi'}
\]
is a $X_\pi$-movable divisor.
In particular 
\[
\bar{\env_X(D)_\pi} - \env_X(D)
\]
is a $X_\pi$-nef $b$-divisor.
Therefore we can use negativity lemma and we have
\[
\bar{\env_X(D)_\pi} - \env_X(D) \leq 0
\]
Furthermore, by the negativity lemma,
\[
\env_X(D) \leq \bar{\env_X(D)_\pi}
\]
Thus $\env_X(D)$ is a $\Q$-Cartier $b$-divisor and we set $D' = \env_\pi(D)_\pi$.
Since $\mu^*D'$ is $X$-movable for any $\mu$ and $\pi'$, 
$D'$ is $X$-nef.
By similar argument, we get $-D'$ is also $X$-nef.
These imply that $D'$ is numerically $\Q$-Cartier.

\end{proof}

\subsection{\'Etale in codimension one endomorphism}
\ 

First, we consider the local situation.

\begin{thm}
Let $X$ be a normal integral scheme essentially of finite type over $\C$ and we fix a closed point $0 \in X$
and $f : (X , 0) \to (X , 0)$ is \'etale in codimension $1$ endomorphism with $f^{-1}(0) = {0} $.
Assume $X \backslash 0$ is $\Q$-Gorenstein and $\tilde{d}(f) > 1$.
Then $X$ is numerically $\Q$-Gorenstein.
\end{thm}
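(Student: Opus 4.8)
The plan is to mimic the structure of the proof of Theorem~1.3 (Theorem~4.11), replacing ``valuative log canonical'' with ``numerically $\Q$-Gorenstein''. The key is to find a numerical invariant, analogous to $\Vol^+$, that vanishes exactly when $X$ is numerically $\Q$-Gorenstein near $0$, and that scales by $\tilde d$ under pullback by $f$. Since $f$ is \'etale in codimension one, the ramification $b$-divisor $R_f$ is trivial, so $K_\Y + 1_{\Y/Y} = f^*(K_\X + 1_{\X/X})$, and in particular $\env_Y(K_Y) = f^*\env_X(K_X)$ by Proposition~3.11. The natural invariant to use is built from $\env_X(K_X) + \env_X(-K_X)$, which by Proposition~6.1 vanishes iff $K_X$ is numerically $\Q$-Cartier; one should localize this statement to the punctured spectrum and package the obstruction into a nef $b$-divisor over $0$.

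First I would set $B := -\bigl(\env_X(K_X) + \env_X(-K_X)\bigr)$, which is an $X$-nef $\R$-Weil $b$-divisor over $X$ with $B \ge 0$ by the computation $\env_X(K_X) + \env_X(-K_X) \le 0$ in the proof of Proposition~6.1 (note $-\env_X(-K_X)$ is $X$-nef since $\env_X$ of anything is; actually one must be slightly careful—$-\env_X(-K_X)$ need not be nef, but $\env_X(K_X)$ and $\env_X(-K_X)$ each are, so their negative sum is a difference of nef $b$-divisors). The cleaner route is: since $X\setminus 0$ is $\Q$-Gorenstein, the $b$-divisor $\env_X(K_X)+\env_X(-K_X)$ is supported over $0$, hence bounded below (using $(\AX^+)^{X\setminus 0}$-type reasoning and that over $X\setminus 0$ it vanishes by Proposition~6.1 applied there), so I may form $\env_\X$ of the relevant nef piece and take its $n$-th self-intersection as in Definition~4.6. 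Call this number $v(X,0) \ge 0$; by Proposition~3.8(3) it is zero iff the corresponding nef $b$-divisor vanishes, and by Proposition~6.1 (localized) that happens iff $K_X$ is numerically $\Q$-Cartier, i.e.\ $X$ is numerically $\Q$-Gorenstein.

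Next I would run the pullback estimate. Because $f$ is \'etale in codimension one, $f^*\env_X(K_X) = \env_Y(K_Y)$ and $f^*\env_X(-K_X) = \env_Y(-K_Y)$ by Proposition~3.11, so pulling back commutes with forming the obstruction $b$-divisor. Applying Proposition~3.12 then gives $v(X,0)$ multiplied by $\tilde d$ is at most $v(Y,0)$; but $Y = X$ and $f$ is the endomorphism, so $v(Y,0) = v(X,0)$. Since $X\setminus 0$ is $\Q$-Gorenstein the invariant $v(X,0)$ is a finite nonnegative number (Remark~4.7-style bound), and $\tilde d > 1$ forces $v(X,0) = 0$. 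Hence $K_X$ is numerically $\Q$-Cartier and $X$ is numerically $\Q$-Gorenstein.

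The main obstacle I anticipate is the bookkeeping in Step~1: verifying that the obstruction $b$-divisor $\env_X(K_X)+\env_X(-K_X)$ is genuinely supported over the closed point $0$ when $X\setminus 0$ is only assumed $\Q$-Gorenstein (not $\Q$-Gorenstein globally), so that it lies in the framework of Section~3 (bounded below, admits a nef envelope over $0$, has a well-defined self-intersection number). This needs Proposition~6.1 applied on $X\setminus 0$ together with a patching argument showing the only ``bad'' valuations are centered at $0$, in the spirit of Proposition~4.9. A second, minor, point is to phrase everything so that Proposition~3.12's inequality—rather than an equality, which is only known in the isolated-singularity case (Remark~3.13)—suffices; but as in Theorem~4.11 the inequality $\tilde d\cdot v(X,0) \le v(X,0)$ in the correct direction is exactly what is needed.
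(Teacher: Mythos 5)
Your proposal is correct and follows essentially the same route as the paper: the paper also reduces via Proposition~6.1 to showing $\env_X(K_X)+\env_X(-K_X)=0$, notes that this $b$-divisor is $X$-nef, $\le 0$, supported over $0$ and bounded below because $X\setminus 0$ is $\Q$-Gorenstein, sets $\Gamma:=-(\env_X(K_X)+\env_X(-K_X))^n$, and uses $f^*K_X=K_X$ (\'etale in codimension one) together with Propositions~3.11, 3.12 and~3.8(3) to get $\tilde d\cdot\Gamma\le\Gamma$, hence $\Gamma=0$. Your side worry about nefness is a non-issue, since the sum $\env_X(K_X)+\env_X(-K_X)$ of two $X$-nef $b$-divisors is itself $X$-nef, which is all the intersection theory requires.
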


\begin{proof}
It is enough to show 
\[
\env_X(K_X) + \env_X(-K_X) = 0
\]
by Proposition 6.1.
Since 
\[
\frac{1}{m}Z(\sO(mK_X) \sO_X(-mK_X)) \leq \env_X(K_X) + \env_X(-K_X)
\]
and  $X \backslash 0$ is $\Q$-Gorenstein,
$\env_X(K_X) + \env_X(-K_X) $ is bounded below and a $X$-nef divisor over $0$.
So we can consider the intersection number
\[
\Gamma : = -(\env_X(K_X) + \env_X(-K_X))^n
\]
Since $f$ is \'etale in codimension $1$, $f^*K_X = K_X$ and we have
\[
f^*(\env_X(K_X) + \env_X(-K_X)) = \env_X(K_X) + \env_X(-K_X)
\]
By Proposition 3.12, we also have
\[
\Gamma \geq \tilde{d}(f) \cdot \Gamma
\]
So the result follows.
\end{proof}

\begin{prop}[\textup{cf. \cite[Lemma 2.2]{BdFF} , \cite[Theorem 4]{Cut}}]
Let $X$ be a normal integral scheme essentially of finite type over $\C$ and
$D$ be numerically $\Q$-Cartier.
If the following sheaf
\[
\bigoplus_m \sO_X (mD)
\]
is finitely generated $\sO_X$-algebra.
Then $D$ is $\Q$-Cartier.
\end{prop}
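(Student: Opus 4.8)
The plan is to upgrade $\env_X(D)$, using the finite generation hypothesis, to a $\Q$-Cartier $b$-divisor determined on a single birational model; to observe, using that $D$ is numerically $\Q$-Cartier, that the trace of $\env_X(D)$ on that model is simultaneously relatively globally generated and relatively numerically trivial; and finally to descend that trace to $X$ by a rigidity argument, showing that $\sO_X(m_0 D)$ is invertible for a suitable $m_0$.

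First I would exploit finite generation. Since $\bigoplus_m \sO_X(mD)$ is a finitely generated $\sO_X$-algebra, there is an $m_0 > 0$ with $\sO_X(km_0 D) = \sO_X(m_0 D)^k$ for all $k \ge 1$ (the $m_0$-th Veronese subalgebra is generated in degree one). As divisorial fractional ideals on a normal variety are integrally closed and $Z(\sO_X(m_0 D)^k) = k\, Z(\sO_X(m_0 D))$ as $b$-divisors, the sequence $\bigl\{\tfrac{1}{km_0} Z(\sO_X(km_0 D))\bigr\}_k$ is constant, so, the envelope being computed by any cofinal subsequence, $\env_X(D) = \tfrac{1}{m_0} Z(\sO_X(m_0 D)) = \tfrac{1}{m_0}\,\overline{Z(\sO_X(m_0 D))_\pi}$ is a $\Q$-Cartier $b$-divisor determined on the normalized blow-up $\pi$ of $\sO_X(m_0 D)$. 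Writing $D' := \env_X(D)_\pi$, so that $m_0 D' = Z(\sO_X(m_0 D))_\pi$, the tautological surjection $\pi^* \sO_X(m_0 D) \twoheadrightarrow \sO_{X_\pi}(m_0 D')$ shows $m_0 D'$ is $X$-globally generated, and $\pi_* \sO_{X_\pi}(m_0 D') = \overline{\sO_X(m_0 D)} = \sO_X(m_0 D)$.

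Next, since $D$ is numerically $\Q$-Cartier, Proposition 6.1 gives $\env_X(D) + \env_X(-D) = 0$, so $\overline{-D'} = -\env_X(D) = \env_X(-D)$, which is $X$-nef because nef envelopes are always $X$-nef. Hence $-D'$ is an $X$-nef divisor on $X_\pi$, and combined with the $X$-global generation of $m_0 D'$ this forces $m_0 D'$ to be $X$-numerically trivial. For the descent, the surjection above defines a morphism $\phi \colon X_\pi \to \PP := \Proj_X\bigl(\mathrm{Sym}\,\sO_X(m_0 D)\bigr)$ over $X$ with $\sO_{X_\pi}(m_0 D') = \phi^* \sO_\PP(1)$ and $\sO_\PP(1)$ relatively ample over $X$; since $\phi^* \sO_\PP(1)$ is $X$-numerically trivial, $\phi$ contracts every curve contracted by $\pi$, and as $\pi$ is projective birational with connected fibres the rigidity lemma factors $\phi$ as $\psi \circ \pi$ for some $\psi \colon X \to \PP$. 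Then $\sO_{X_\pi}(m_0 D') = \pi^* \psi^* \sO_\PP(1)$, and applying $\pi_*$ (using $\pi_*\sO_{X_\pi} = \sO_X$ together with the identity $\pi_*\sO_{X_\pi}(m_0 D') = \sO_X(m_0 D)$ from the previous step) gives $\sO_X(m_0 D) = \psi^* \sO_\PP(1)$, an invertible sheaf; thus $m_0 D$ is Cartier and $D$ is $\Q$-Cartier.

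The main obstacle I expect is this last step: one must check carefully that $\phi$ does contract all $\pi$-exceptional curves and then invoke the rigidity lemma to factor it through $\pi$, whereas the passages before it are routine manipulations with the nef envelopes developed in Sections 2 and 6. If one prefers to avoid reproving such a statement, observe that the first two steps already exhibit a birational model $\pi$, an $X$-numerically trivial $\Q$-Cartier divisor $D'$ on $X_\pi$ with $\pi_* D' = D$, together with the extra information that $m_0 D'$ is $X$-globally generated; the conclusion then follows directly from \cite[Lemma 2.2]{BdFF} or \cite[Theorem 4]{Cut}.
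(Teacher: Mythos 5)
Your overall strategy is the same as the paper's: use finite generation to realize $\env_X(D)$ as a $\Q$-Cartier $b$-divisor determined on a single model, use numerical $\Q$-Cartierness to see that its trace there is $X$-numerically trivial, and then contract. The differences are in the execution. The paper works on the small model $Y=\Proj_X(\bigoplus_m\sO_X(mD))$ furnished by \cite[Lemma 6.2]{KM}, where the trace $\Gamma_m$ is relatively \emph{ample}; ample plus relatively numerically trivial forces $Y\to X$ to have no positive-dimensional fibres, hence (being birational onto the normal $X$) to be an isomorphism, so $mD$ is Cartier outright. You work on the normalized blow-up, where the trace is only relatively globally generated, and therefore need the extra rigidity/factorization argument through $\Proj_X(\mathrm{Sym}\,\sO_X(m_0D))$; that argument is fine, just longer than necessary.

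There is, however, one step that does not hold up as written: from ``$\overline{-D'}=\env_X(-D)$ is an $X$-nef Weil $b$-divisor'' you conclude ``$-D'$ is an $X$-nef divisor on $X_\pi$''. Nefness of a Weil $b$-divisor is defined by coefficient-wise approximation by nef Cartier $b$-divisors, and coefficient-wise convergence does not control intersection numbers with curves on a fixed model; what the paper actually proves (Corollary 6.4) is only that the trace of an $X$-nef Weil $b$-divisor on a \emph{smooth} model is $X$-\emph{movable}, which in dimension $\ge 3$ is strictly weaker than $X$-nef (a movable class can meet a contracted curve negatively). The conclusion you want --- that $D'$ is $X$-numerically trivial --- is nevertheless correct, and is reached as in the paper: by the definition of numerical $\Q$-Cartierness there is a model $X_\rho$ carrying an $X$-numerically trivial $\Q$-Cartier divisor $D''$ with $\rho_*D''=D$, and the proof of Proposition 6.1 (direction (1)$\Rightarrow$(2)) shows that in fact $\env_X(D)=\overline{D''}$; since also $\env_X(D)=\overline{D'}$, the pullbacks of $D'$ and $D''$ agree on a common refinement, and the projection formula gives $D'\cdot C=0$ for every $\pi$-contracted curve $C$. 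With that repair your proof is complete.
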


\begin{proof}
We consider the small projective birational morphism
\[
\pi : \Proj_X (\bigoplus_m \sO_X (mD)) = Y \to X
\]
by  {\cite[Lemma 6.2]{KM}}.
Then $\sO_X (mD) \cdot  \sO_Y$ is invertible sheaf for divisible enough $m$.
We set the divisor $\Gamma_m$ on $Y$ as the following
\[
\sO_X (mD)  \sO_Y = \sO_Y (m\Gamma_m)
\]
By assumption, we have $\Gamma_m$ = $\Gamma_{m'}$ for divisible enough $m$ and $m'$.
This means that
\[
\env_X(D) = \lim \frac{1}{k} Z(\sO_X(kD)) = \lim \bar{\Gamma_k} = \bar{\Gamma_m}
\]
But $\Gamma_m$ is $X$-ample and $\env_X (D)_Y$ is $X$-numerically trivial.
Indeed if $\env_X (D)$ is $X$-numerically trivial and $\env_X(D)_Y$ is $\Q$-Cartier,
then $\env_X(D) = \bar{\env_X (D)_Y}$.
So $\pi$ is identity map and $mD$ is Cartier for some $m$.

\end{proof}

We use the following fact by proving that
the non numerically $\Q$-Gorenstein locus is closed.

\begin{thm}(cf. {\cite[Theorem 1.1 and Theorem 4.9]{Has}})
Let $X$ be a normal variety over $\C$.
Then there exists a birational model $h : W \to X$ and $0<t<1$ such that
\begin{itemize}
\item any prime divisor $E$ on $W$ satisfies 
\[
\ord_E (\AX) = \ord_E ( K_{\X} + 1_{\X/X} + \env_X(-K_X) ) < 0,
\]
\item $ (W,E^h)$ is log canonical where $E^h$ is the sum of the exceptional prime divisors, and
\item $K_W + (1-t) E^h $ is $h$-ample.
\end{itemize}
\end{thm}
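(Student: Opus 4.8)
The plan is to recognize this, via the dictionary of Remark 4.2, as the statement that $X$ admits a ``relative minimal non-log canonical model'': the $b$-divisor $\AX = K_{\X}+1_{\X/X}+\env_X(-K_X)$ is the log-discrepancy $b$-divisor of \cite{BdFF}, and one wants a single birational model $W$ that extracts only divisors of negative $\AX$-coefficient (the first bullet, which should be read for $h$-exceptional $E$, since for a non-exceptional prime divisor one has $\ord_E(\AX)=0$), whose reduced exceptional divisor gives a log canonical pair (the second bullet) carrying an $h$-ample twisted canonical class (the third bullet). This repackages \cite[Theorem 1.1 and Theorem 4.9]{Has}; I will indicate how $W$ is built and how the three conditions are read off.

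I would first fix a log resolution $g\colon V\to X$ and let $E^V$ be the reduced sum of its exceptional prime divisors, so that $(V,E^V)$ is log smooth and $(V,(1-t)E^V)$ is klt for every $0<t<1$. Fixing a sufficiently small rational $t\in(0,1)$, I would then run a relative minimal model program over $X$ for $(V,(1-t)E^V)$ with scaling of a $g$-ample divisor, arranged as in \cite{Has} so that its steps contract exactly the exceptional divisors of nonnegative $(\AX)_V$-coefficient. It terminates with a model $h\colon W\to X$ carrying the strict transform $E^h$ of $E^V$, which is then the sum of all $h$-exceptional prime divisors on $W$, such that $K_W+(1-t)E^h$ is $h$-nef; completing the final scaled step makes $K_W+(1-t)E^h$ $h$-ample, which is the third bullet. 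Since every step is carried out with boundary coefficient $1-t<1$, log smoothness of $(V,E^V)$ can only degenerate to log canonicity, so $(W,E^h)$ is log canonical, the second bullet. For the first bullet I would use that $\env_X(-K_X)$ is an $X$-nef $\R$-Weil $b$-divisor (Example 2.9), so that the negativity lemma (Proposition 2.12) applies to $\AX$ on the models above $W$; combined with the fact that the components of $E^h$ surviving the program are precisely the divisors with $\ord_E(\AX)<0$ already on $V$, this gives $\ord_E(\AX)<0$ for every $h$-exceptional prime divisor on $W$. That a single birational model suffices --- only finitely many divisors need to be extracted --- rests on the closedness of the non-log canonical locus, which is established just as for the non-valuative-log-canonical locus earlier in the paper and is part of \cite[Theorem 1.1]{Has}.

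The main obstacle is the termination of this relative program for an arbitrary normal $X$: one must know either that the class being run is pseudo-effective over $X$, so that the klt minimal model program of Birkar--Cascini--Hacon--McKernan produces a relative minimal model rather than a Mori fibre space, or else carry out a direct termination argument over the non-log canonical locus while keeping track of which divisors get contracted. This is the technical heart of \cite{Has} and may legitimately be invoked here as a black box; the remaining work --- confirming the three displayed properties on the resulting $W$ --- is the negativity-lemma and log-smoothness bookkeeping sketched above.
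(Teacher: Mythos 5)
The paper does not prove this statement at all: it is quoted verbatim as an external result, and the only ``proof'' offered is the remark that Hashizume defines log discrepancies for arbitrary (not necessarily $\R$-Cartier) pairs, that these coincide with $\ord_E(\AX)$ by \cite[Theorem 4.9]{Has}, and that the model $W$ is constructed in the proof of \cite[Theorem 1.1]{Has}. Your proposal is therefore not in competition with an argument in the paper; it is a reconstruction of the cited source's strategy (log resolution, then a relative MMP over $X$ with boundary $(1-t)E^V$, with ampleness from the last scaling step and log canonicity from running the program on a klt log smooth pair), and that reconstruction is consistent with how \cite{Has} actually proceeds. Your reading of the first bullet as a statement about $h$-exceptional divisors is also the correct one.

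One caveat on your sketch: the step ``its steps contract exactly the exceptional divisors of nonnegative $(\AX)_V$-coefficient'' is asserted rather than derived, and your later appeal to it when verifying the first bullet is circular as written --- identifying the divisors killed by the MMP with those of nonnegative log discrepancy, and identifying Hashizume's log discrepancy with the coefficient of $\AX = K_{\X} + 1_{\X/X} + \env_X(-K_X)$, is precisely the nontrivial content of \cite[Theorems 1.1 and 4.9]{Has}. (By contrast, termination is not the main obstacle you suggest it is: over a birational base every divisor is relatively big, so the klt MMP with scaling terminates by Birkar--Cascini--Hacon--McKernan.) Since the paper black-boxes the entire theorem, deferring exactly this point to \cite{Has} is acceptable, but you should flag it as the genuinely cited ingredient rather than as bookkeeping.
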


In {\cite{Has}}, he defined the log discrepancies for pairs not necessary $\R$-Cartier,
and it coincides with $\ord_E (\AX)$ for any prime divisor $E$ over $X$ by {\cite[Theorem 4.9]{Has}}.
Furthermore, in the proof of {\cite[Theorem 1.1]{Has}}, he constructed a model as in Theorem 6.8.

\begin{defn}
Let $X$ be a normal integral scheme essentially of finite type over $\C$.
We set the non numerically $\Q$-Gorenstein locus
\[
\mathrm{Nnum} (X) := \bigcup \mathrm{c}_X (E)
\]
where the union is taken by the set 
\[
\{ E : exceptional \  prime \  divisor \  over \  X \ | \ \ord_E ( \env_X(K_X) + \env_X(-K_X) ) < 0\}. 
\]
\end{defn}

\begin{thm}
Let $X$ be a normal valuative log canonical variety over $\C$.
Then the non-numerically $\Q$-Gorenstein locus coincides with the non-$\Q$-Gorenstein locus.
In particular, non-numerically $\Q$-Gorenstein locus is closed.
\end{thm}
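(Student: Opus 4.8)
The plan is to show the two loci agree as subsets of $X$ and then deduce closedness from the description of the non-$\Q$-Gorenstein locus via Theorem 6.8. First I would observe the trivial inclusion: if $K_X$ is $\Q$-Cartier in a neighbourhood of a point $x$, then by the second property of nef envelopes listed in the introduction, $\env_X(K_X)_\pi = \pi^*K_X$ and $\env_X(-K_X)_\pi = -\pi^*K_X$ on every birational model $\pi$, so $\env_X(K_X) + \env_X(-K_X) = 0$ over that neighbourhood and $x \notin \mathrm{Nnum}(X)$. Hence $\mathrm{Nnum}(X) \subseteq (\text{non-}\Q\text{-Gorenstein locus})$.

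For the reverse inclusion I would localize. Fix a point $x$ in the complement of $\mathrm{Nnum}(X)$; replacing $X$ by $\Spec \sO_{X,x}$, the hypothesis becomes $\env_X(K_X) + \env_X(-K_X) = 0$, i.e. $K_X$ is numerically $\Q$-Cartier by Proposition 6.1. The goal is then to upgrade ``numerically $\Q$-Cartier'' to ``$\Q$-Cartier'', and by Proposition 6.7 it suffices to show that $\bigoplus_m \sO_X(mK_X)$ is a finitely generated $\sO_X$-algebra. This is where the valuative log canonical hypothesis enters: by Theorem 4.4, $X$ being valuative log canonical means $\pi_*\sO_{X_\pi}(m(K_{X_\pi}+E^\pi)) = \sO_X(mK_X)$ for every birational model $\pi$ and every $m$. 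Taking $\pi = h$ to be the model supplied by Theorem 6.8, we have $K_W + (1-t)E^h$ is $h$-ample and $(W,E^h)$ is log canonical; I would then compare the two graded rings $\bigoplus_m \sO_X(mK_X) = \bigoplus_m h_*\sO_W(m(K_W+E^h))$ and, using that $\env_X(-K_X) = -\env_X(K_X)$ in the numerically $\Q$-Cartier case together with the construction of $h$, argue that $K_W + E^h$ is itself $h$-semiample (the ampleness of $K_W+(1-t)E^h$ plus control of the coefficients along $E^h$ coming from numerical $\Q$-Cartierness forces the section ring to be finitely generated). Finite generation of $\bigoplus_m h_*\sO_W(m(K_W+E^h))$ then gives finite generation of $\bigoplus_m\sO_X(mK_X)$, and Proposition 6.7 finishes the reverse inclusion.

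Finally, once the two loci coincide, closedness is immediate from Theorem 6.8 and Definition 6.9: the model $h : W \to X$ there has the property that every exceptional prime divisor $E$ on $W$ satisfies $\ord_E(\AX) < 0$, and since $X$ is valuative log canonical one checks (using Remark 6.3, which identifies $\AX$ with $\AX^+$ in the numerically $\Q$-Gorenstein case) that the center $\mathrm{c}_X(E)$ of such an $E$ lies in $\mathrm{Nnum}(X)$ exactly when $K_X$ fails to be $\Q$-Cartier along it; as $h$ has finitely many exceptional divisors and each $\mathrm{c}_X(E)$ is closed, the union is a closed subset.

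I expect the main obstacle to be the middle step: extracting finite generation of the canonical ring from the valuative log canonical hypothesis plus Theorem 6.8. The subtlety is that $K_W + E^h$ need not be $h$-ample on the nose — only $K_W + (1-t)E^h$ is — so one has to use the numerical $\Q$-Cartier hypothesis to pin down the multiplicities of $\env_X(K_X)$ along the exceptional divisors and show the discrepancy between $K_W+E^h$ and a genuinely $h$-ample $\Q$-Cartier divisor is supported on divisors that do not obstruct finite generation, or alternatively to invoke a base-point-free / finite-generation result for the log canonical pair $(W,E^h)$ directly. The other steps are bookkeeping with the negativity lemma and the definitions.
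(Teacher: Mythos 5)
There is a genuine gap in the middle step, and you have in fact flagged it yourself: you never actually prove that $\bigoplus_m \sO_X(mK_X)$ is finitely generated near a point of the complement of $\mathrm{Nnum}(X)$, and the route you sketch (showing $K_W+E^h$ is $h$-semiample by ``controlling the coefficients along $E^h$'') would be a serious finite-generation statement for a log canonical ring that neither Theorem 6.8 nor numerical $\Q$-Cartierness hands you. The observation that unlocks the argument, and that your proposal never uses, is the defining property of Hashizume's model: every $h$-exceptional prime divisor $E$ on $W$ satisfies $\ord_E(\AX)<0$. Combined with the valuative log canonical hypothesis $\ord_E(\AX^+)\geq 0$, this gives
\[
\ord_E\bigl(\env_X(K_X)+\env_X(-K_X)\bigr)=\ord_E(\AX-\AX^+)<0,
\]
so the center of \emph{every} exceptional divisor of $W$ lies in $\mathrm{Nnum}(X)$. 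Consequently, if $U$ denotes the complement of the images of the exceptional divisors, then $X\setminus U\subset\mathrm{Nnum}(X)$, and over $U$ the morphism $h$ is small, $E^h$ restricts to zero, and $K_W$ itself is $h$-ample there. The dichotomy you worried about ($K_W+E^h$ versus $K_W+(1-t)E^h$) simply does not arise: on the only locus where you need finite generation, the exceptional divisor is empty, $h^{-1}(U)\to U$ is a small canonical model, and $\bigoplus_m\sO_U(mK_U)$ is finitely generated for free. Proposition 6.6 then identifies $\mathrm{Nnum}(U)$ with the non-$\Q$-Gorenstein locus of $U$, and together with $X\setminus U\subset\mathrm{Nnum}(X)\subset(\text{non-}\Q\text{-Gorenstein locus})$ this gives the equality of the two loci everywhere; closedness then follows since the non-$\Q$-Cartier locus of a Weil divisor is closed.

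Your first inclusion and your use of Proposition 6.1 and Proposition 6.6 are correct and match the paper, and localizing at a point outside $\mathrm{Nnum}(X)$ does yield that $K_X$ is numerically $\Q$-Cartier there. But without the inequality $\ord_E(\AX-\AX^+)<0$ for the exceptional divisors of $W$, you cannot conclude that such a point lies in the locus where $h$ is small, and the semiampleness claim you fall back on is unsupported. I recommend restructuring the argument around the decomposition $X=U\cup(X\setminus U)$ rather than around a pointwise localization followed by a finite-generation argument on all of $W$.
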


\begin{proof}
We take a birational model $h : W \to X$ as in Theorem 6.7 for some $t$.
Let $Z$ be the non $\Q$-Gorenstein locus and
$U$ be the complement of the image of the exceptional prime divisors on $W$.
By Theorem 6.7, $h$ is the log canonical model over $U$ after restricting on $U$.
Since $h$ has the small log canonical model on $U$, we have
\[
\bigoplus_m \sO_U (mK_U)
\]
is finitely generated.
Therefore, by Proposition 6.6, 
$\mathrm{Nnum} (U)$ coincides with non $\Q$-Gorenstein locus.
Because of $N_{num} (X) \cap U = \mathrm{Nnum} (U) $,
it is enough to show that $X \backslash U \subset \mathrm{Nnum}$

Let $E$ be an exceptional prime divisor on $W$, we have
\[
\ord_E (\env_X(K_X) + \env_X(-K_X) ) =\ord_E ( \AX - \AX^+) ,
\]
where $\AX$ is in Theorem 6.7.
However, since $X$ is valuative log canonical and 
$E$ is an exceptional divisor on $W$ ,
 we have $\ord_E ( \AX - \AX^+ ) <0 $.
Thus $\mathrm{c}_X (E) $ is the contained in $\mathrm{Nnum}$, and the theorem follows.

\end{proof}



Using Theorem 6.9,
we can reduce Theorem 1.6 to Theorem 6.5 by the same argument in proof of Theorem 1.4.

\begin{thm}
Let $X$ be a normal projective variety over $\C$, where $k$ is an algebraically closed field  and 
$f : X \to X$ be a non-invertible polarized endomorphism with \'etale in codimension one.
Then $K_X$ is $\Q$-linearly trivial.
In particular $X$ is of Calabi-Yau type.
\end{thm}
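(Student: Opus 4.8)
The plan is to transplant, essentially verbatim, the argument of the proofs of Theorem 5.1 and Corollary 5.3, replacing the non valuative log canonical locus by the non numerically $\Q$-Gorenstein locus $\mathrm{Nnum}(X)$ and using Theorem 6.5 and Theorem 6.9 in place of Theorem 4.11 and Proposition 4.5. By Theorem 5.2 the variety $X$ has valuative log canonical singularities, so by Theorem 6.9 the set $\mathrm{Nnum}(X)$ agrees with the non-$\Q$-Gorenstein locus of $X$ and is Zariski closed with finitely many irreducible components. Suppose, for contradiction, that $\mathrm{Nnum}(X)\ne\emptyset$, and pick an irreducible component $Z$ with generic point $\eta$. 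Since $X$ is normal it is regular, hence $\Q$-Gorenstein, at all points of codimension $\le 1$, so $\mathrm{codim}_X Z\ge 2$; in particular $\dim Z<\dim X$.

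First I would show that, after replacing $f$ by an iterate, $Z$ is totally invariant. Because $f$ is \'etale in codimension one we have $f^*K_X\sim K_X$, so by Proposition 3.11 (applied to $K_X$ and to $-K_X$), and using that $\env_X(K_X)+\env_X(-K_X)$ does not depend on the chosen canonical divisor,
\[
f^*\bigl(\env_X(K_X)+\env_X(-K_X)\bigr)=\env_X(K_X)+\env_X(-K_X).
\]
Hence, if $E$ is a prime divisor over $X$ centered at $\eta$ with $\ord_E(\env_X(K_X)+\env_X(-K_X))<0$ and $f^*E=\sum_i r_iF_i$, then $\ord_{F_i}(\env_X(K_X)+\env_X(-K_X))=r_i\,\ord_E(\env_X(K_X)+\env_X(-K_X))<0$, so $f^{-1}(Z)\subseteq\mathrm{Nnum}(X)$. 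Running the induction on $\mathrm{codim}\,Z$ exactly as in the Claim in the proof of Theorem 5.1 then shows that, up to an iterate, $f^{-1}(Z)=Z$, so $Z$ is totally invariant.

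Next I localize. The morphism $f$ induces $f_\eta\colon(X_\eta,\eta)\to(X_\eta,\eta)$ with $X_\eta=\Spec\sO_{X,\eta}$, which is finite, satisfies $f_\eta^{-1}(\eta)=\{\eta\}$, is \'etale in codimension one, and
\[
\tilde d(f_\eta)=\frac{\deg(f_\eta)}{[{f_\eta}_*\kappa(Z):\kappa(Z)]}=\frac{\deg(f)}{\deg(f|_Z)}.
\]
Choose an ample divisor $H$ on $X$ with $f^*H\sim qH$; non-invertibility forces $q\ge 2$, and comparing top self-intersections gives $\deg(f)=q^{\dim X}$, while $f|_Z$ is polarized by $H|_Z$, so $\deg(f|_Z)=q^{\dim Z}$; hence $\tilde d(f_\eta)=q^{\dim X-\dim Z}>1$. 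On the other hand $X_\eta\setminus\{\eta\}$ is $\Q$-Gorenstein, since every point of $X_\eta$ other than $\eta$ lies outside $\mathrm{Nnum}(X)=$ the non-$\Q$-Gorenstein locus ($Z$ being one of its components), whereas $X_\eta$ itself is not numerically $\Q$-Gorenstein because $\eta\in\mathrm{Nnum}(X)$. This contradicts Theorem 6.5. Therefore $\mathrm{Nnum}(X)=\emptyset$, i.e.\ $X$ is $\Q$-Gorenstein; being moreover valuative log canonical, and since $\env_X(K_X)=\overline{K_X}$ for $\Q$-Gorenstein $X$, it is log canonical. Finally, by \cite{BG} ($X$ being $\Q$-Gorenstein) the variety $X$ is of Calabi-Yau type, so there is an effective $\Q$-divisor $\Delta$ with $K_X+\Delta\sim_\Q 0$ and $(X,\Delta)$ log canonical; as $\Delta$ is then $\Q$-Cartier and $f^*K_X\sim_\Q K_X$, $f^*H\sim qH$, $q>1$ give $(K_X\cdot H^{\dim X-1})=0$, intersecting $-K_X\sim_\Q\Delta$ with $H^{\dim X-1}$ yields $(\Delta\cdot H^{\dim X-1})=0$ and hence $\Delta=0$. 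Thus $K_X\sim_\Q 0$, and in particular $X$ is of Calabi-Yau type.

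I expect the main obstacle to be the reduction to the local Theorem 6.5: one must check that along the component $Z$ the hypotheses of Theorem 6.5 are genuinely met — that $X_\eta\setminus\{\eta\}$ is $\Q$-Gorenstein, which is exactly where Theorem 6.9 identifying $\mathrm{Nnum}(X)$ with the non-$\Q$-Gorenstein locus is indispensable, and that $X_\eta$ is not numerically $\Q$-Gorenstein — together with the compatibility of nef envelopes with localization needed to transplant the global component-shuffling of Theorem 5.1. Once this bookkeeping is in place, the remainder is a direct copy of the proofs of Theorem 5.1 and Corollary 5.3, and the step upgrading ``$\Q$-Gorenstein'' to ``$K_X\sim_\Q 0$'' is immediate from \cite{BG} together with the positivity of $\Delta$ against $H$.
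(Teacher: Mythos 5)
Your argument is correct, and for the main body it coincides with the paper's: both proofs use Theorem 5.2 and Theorem 6.9 to identify $\mathrm{Nnum}(X)$ with the (closed) non-$\Q$-Gorenstein locus, show a putative component $Z$ is totally invariant by the component-shuffling induction of Theorem 5.1, localize at the generic point of $Z$, and contradict Theorem 6.5 via $\tilde d(f_\eta)=q^{\dim X-\dim Z}>1$. (Your route to total invariance, via the $f^*$-invariance of $\env_X(K_X)+\env_X(-K_X)$ and $\ord_{F_i}(f^*W)=r_i\ord_E(W)$, is if anything more explicit than the paper's remark that $f_*K_X=K_X$ fails to be $\Q$-Cartier along $f^{-1}(Z)$; both are legitimate.) Where you genuinely diverge is the endgame, after $X$ is known to be $\Q$-Gorenstein. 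The paper passes to the normal affine cone $\tilde X$ over $H$, applies Theorem 6.5 and Proposition 6.6 at the vertex to make $K_{\tilde X}$ $\Q$-Cartier, deduces $K_X\sim_\Q aH$ from \cite[Lemma 2.32]{BdFF}, and kills $a$ by the eigenvalue relation $aH\sim_\Q K_X=f^*K_X\sim_\Q aqH$ with $q>1$. You instead quote the Broustet--Gongyo theorem (valid precisely in the $\Q$-Gorenstein, \'etale-in-codimension-one case you have just reached) to produce $\Delta\ge 0$ with $K_X+\Delta\sim_\Q 0$, and then force $\Delta=0$ from $(K_X\cdot H^{\dim X-1})=0$, which follows from $f^*K_X\sim_\Q K_X$, $f^*H\sim qH$ and $\deg(f)=q^{\dim X}$. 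Both endgames are sound; the paper's is self-contained within its cone machinery and yields the stronger intermediate statement that $K_X$ is $\Q$-linearly proportional to $H$, while yours is shorter but leans on \cite{BG} as a black box. The localization compatibilities you flag (that $\eta\in\mathrm{Nnum}(X)$ really means $X_\eta$ fails to be numerically $\Q$-Gorenstein, and that $X_\eta\setminus\{\eta\}$ is $\Q$-Gorenstein) are treated at exactly the same level of informality in the paper's own proofs of Theorems 5.1 and 6.10, so they do not constitute a gap relative to the source.
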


\begin{proof}
By Theorem 6.9, the non numerically $\Q$-Gorenstein locus is closed.
If this set is nonempty, we take an irreducible component $Z$.

\begin{cl}
$Z$ is totally invariant up to replacing $f$ some iterate.
\end{cl}

\begin{clproof}
This proof is very similar to the argument in the proof of Theorem 5.1.
We take $x \in X$ with $f(x)$ is the generic point of $Z$.
Then $K_X$ is not $\Q$-Cartier since $f_* K_X = K_X$ is not $\Q$-Cartier at the generic point of $Z$.
So we have
\[
f^{-1} (Z) \subset \mathrm{Nnum} (X)
\]
By the same argument in the proof of Theorem 5.1,
$Z$ is totally invariant up to replacing $f$ some iterate.
\end{clproof}

We may assume $Z$ is totally invariant of $f$, and by Theorem 6.9,
$Z$ is an also irreducible component of non $\Q$-Gorenstein varieties.
By Theorem 6.5 and similar argument of the proof of Theorem 5.1,
$K_X$ is a $\Q$-Cartier divisor on generic point of $Z$.
It is contradiction so we have $K_X$ is $\Q$-Cartier divisor.

Next, we take an ample divisor $H$ associated to $f$ and we consider an normal affine cone $\tilde{X}$ associated to $H$ and associated local endomorphism
$\tilde{f} : (\tilde{X} , 0) \to (\tilde{X} , 0) $.
Thus, $K_{\tilde{X}}$ is $\Q$-Cartier on $\tilde{X} \backslash 0$ and $\deg (\tilde{f}) \geq 2$,
therefore $K_{\tilde{X}}$ is $\Q$-Cartier by Theorem 5.1 and Proposition 6.6. 
Then $K_X \sim_{\Q} aH$ for some $a \in \Q$ by \cite[Lemma 2.32]{BdFF},
and $af^* K_X \sim_{\Q} aq K_X$ for some integer $q > 1$.
Since $f$ is \'etale in codimension one, we have $a=0$.
\end{proof}


\section{Singularities of non-$\Q$-Gorenstein pairs admitting an int-amplified endomorphism}
In this section, we consider the generalization of Theorem 1.4.
First we replace polarized endomorphisms to int-amplified endomorphisms.

\begin{defn}
Let $f : X \to X$ be a surjective endomorphim of a projective variety $X$.
We say that $f$ is \emph{int-amplified} if $f^* L - L$ is an ample Cartier divisor for some ample Cartier divisor $L$.
\end{defn}

By the following lemma, we can reduce some global question about int-amplified endomorphisms to some local question as about polarized endomorphisms.

\begin{lem}[\textup{\cite[Lemma 3.12]{meng}}]
Let $f : X \to X $ be an int-amplified endomorphism of a projective variety $X$.
Let $Z$ be an $f^{-1}$-periodic closed subvariety of $X$ such that 
$\deg (f|_Z) = \deg (f)$. Then $Z = X$.
\end{lem}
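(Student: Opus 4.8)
The plan is to reduce to the case where $Z$ is totally invariant and then play off the degree of $f|_Z$ against the dynamical degrees of $f$. Since $Z$ is $f^{-1}$-periodic, say $f^{-s}(Z)=Z$, I first replace $f$ by $f^{s}$: an iterate of an int-amplified endomorphism is again int-amplified (if $f^{*}L-L$ is ample then $(f^{s})^{*}L-L=\sum_{i=0}^{s-1}(f^{i})^{*}(f^{*}L-L)$ is a sum of pullbacks of an ample divisor under finite maps, hence ample), and $\deg(f^{s}|_{Z})=\deg(f|_{Z})^{s}=(\deg f)^{s}=\deg(f^{s})$, so the hypotheses persist. Hence I may assume $f^{-1}(Z)=Z$, and then also $f(Z)=Z$ since $f$ is surjective. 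Write $n=\dim X$, $d=\dim Z$, $e=\deg f$, and note $f$ is automatically finite.

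Next I extract a cycle identity. The scheme-theoretic preimage $f^{-1}(Z)$ has reduction $Z$, so $f^{*}[Z]=m[Z]$ in $N_{d}(X)_{\R}$ for an integer $m\ge 1$ (the generic ramification index of $f$ along $Z$). Combining $f_{*}f^{*}[Z]=e\,[Z]$ (projection formula) with $f_{*}[Z]=\deg(f|_{Z})\,[Z]$ gives $m\deg(f|_{Z})=e$, so the hypothesis $\deg(f|_{Z})=e$ forces $m=1$. Thus $f^{*}[Z]=[Z]$, and $[Z]$ is a nonzero class — nonzero because $(L^{d}\cdot Z)>0$ for $L$ ample — fixed by $f^{*}$; equivalently $[Z]$ is an eigenvector of $f_{*}$ with eigenvalue $e$.

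Now I use int-amplifiedness. The key input is that for an int-amplified $f$ every eigenvalue of $f^{*}$ on $N^{1}(X)_{\R}$ has absolute value $>1$ (cf. \cite{meng}). From $f_{*}f^{*}=e\cdot\mathrm{id}$ on $N^{1}$, the eigenvalues of $f_{*}$ on $N^{1}$ are $e/\lambda_{i}$ with $|e/\lambda_{i}|<e$, whence $d_{n-1}(f)=\rho(f_{*}|_{N^{1}})<e$; since the sequence $k\mapsto\log d_{k}(f)$ is concave (Khovanskii--Teissier) with $d_{0}(f)=1$ and $d_{n}(f)=e$, this forces $d_{k}(f)<e$ for every $k<n$. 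On the other hand $\deg(f|_{Z})=d_{d}(f|_{Z})$ (top dynamical degree equals topological degree), and $d_{d}(f|_{Z})\le d_{d}(f)$ because dynamical degrees do not increase under restriction to an invariant subvariety. If $d<n$ these facts give $\deg(f|_{Z})<e$, contradicting the hypothesis; hence $d=n$, and since $Z\subseteq X$ is closed and irreducible, $Z=X$.

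The essential content — and the only place int-amplifiedness is used — is the third step: that the hypothesis is detected on $N^{1}$ (all $f^{*}$-eigenvalues of modulus $>1$) and that this propagates to the strict comparison $d_{d}(f)<\deg f$ for $d<\dim X$. Everything else is bookkeeping with the projection formula, the splitting-off of the iterate, and the elementary restriction inequality $d_{d}(f|_{Z})\le d_{d}(f)$, so that is where I expect the real work (and the appeal to \cite{meng}) to lie.
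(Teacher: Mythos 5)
The paper does not actually prove this statement: Lemma 7.2 is imported verbatim from \cite[Lemma 3.12]{meng}, so the only comparison available is with Meng's own argument. Your core argument --- the case $f^{-1}(Z)=Z$ --- is correct and takes a genuinely different route. Meng's proof is essentially your Step 2 pushed one step further: $f^{*}[Z]=[Z]$ exhibits an eigenvalue of modulus $1$ for $f^{*}$ on the group of $d$-dimensional cycle classes, contradicting his theorem that \emph{all} eigenvalues of $f^{*}$ on $N_{k}(X)$ have modulus $>1$ for every $k$ when $f$ is int-amplified, unless $d=\dim X$. You instead use only the $N^{1}$ eigenvalue statement, paying for it with log-concavity of dynamical degrees, the identification $d_{n-1}(f)=\rho(f_{*}|_{N^{1}})$, and the restriction inequality $d_{d}(f|_{Z})\le d_{d}(f)$ (which does hold: realize $Z$ as a component of a complete intersection of members of $|NH|$ containing it to get $(\alpha^{d}\cdot Z)\le C\,(\alpha^{d}\cdot H^{n-d})$ for all nef $\alpha$). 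This is a legitimate trade, though none of these inputs is more elementary than the higher-codimension eigenvalue bound you are avoiding; note also that once you run Step 3 you never use the conclusion $f^{*}[Z]=[Z]$ of Step 2, so that step is only needed for Meng's route.

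The reduction to $f^{-1}(Z)=Z$, however, contains a genuine error. When $f(Z)\ne Z$ the map $f^{s}|_{Z}$ factors through the orbit $Z\to f(Z)\to\cdots\to f^{s}(Z)=Z$, so $\deg(f^{s}|_{Z})=\prod_{i=0}^{s-1}\deg\bigl(f|_{f^{i}(Z)}\bigr)$, and the hypothesis controls only the first factor; the identity $\deg(f^{s}|_{Z})=\deg(f|_{Z})^{s}$ is false. Concretely, take $X=\PP^{1}\times\PP^{1}$ and $f(x,y)=(y^{4},x)$. Then $f^{*}$ acts on $N^{1}(X)\cong\R^{2}$ by $(a,b)\mapsto(b,4a)$, and $f^{*}(1,2)-(1,2)=(1,2)$ is ample, so $f$ is int-amplified with $\deg f=4$. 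The curve $Z=\{0\}\times\PP^{1}$ satisfies $f^{-2}(Z)=Z$, and $f|_{Z}\colon(0,y)\mapsto(y^{4},0)$ has degree $4=\deg f$ onto its image, yet $Z\ne X$ (indeed $\deg(f^{2}|_{Z})=4\ne 16=\deg(f^{2})$). So the periodic version of the statement is only tenable if $\deg(f|_{Z})$ is read as the degree of the self-map obtained after first replacing $f$ by $f^{s}$, i.e.\ if one assumes $f^{-1}(Z)=Z$ from the outset --- which is the only situation in which the paper uses the lemma (in Corollary 7.5 the component $Z$ is already totally invariant). With that reading your proof goes through; as written, the first step does not.
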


Next we consider pairs with reduced totally invariant divisor.

\begin{defn}
Let $X$ be a normal integral scheme essentially of finite type over $\C$.
Let $\Delta$ be a reduced divisor on $X$.
Then we set
\[
A^+_{\X/(X,\Delta)} := K_{\X} + 1_{\X/X} + \Delta - \env_X(K_X + \Delta)
\]
and if  $X$ has the unique closed point $0$ and $A^+_{\X/(X,\Delta)}$ is bounded below, we set
\[
\mathrm{Vol}(X,\Delta,0)^+ := -(\env_{\X}(A^+_{\X/(X,\Delta)}))^n.
\]
We remark that $A^+_{\X/(X,\Delta)}$ is exceptional.

\end{defn}

The following Theorem is generalization of Theorem 1.3

\begin{thm}[\textup{cf. Theorem 4.12}]
Let $X$ be a normal integral scheme essentially of finite type over $\C$ with unique closed point $0$.
Let $f : X \to X$ be an endomorphism with $f^{-1}(0) = \{0 \}$.
Let $\Delta$ be a reduced totally invariant divisor on $X$.
If $\tilde{d} > 1$ and $A^+_{\X/(X,\Delta)}$ is grater than or equal to $0$ on $X \backslash 0$,
then we have $A^+_{\X/(X,\Delta)} \geq 0$.
\end{thm}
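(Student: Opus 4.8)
The plan is to mirror the proof of Theorem 4.11 (= Theorem 1.3), replacing the log discrepancy $b$-divisor $\AX^+$ by its pair analogue $A^+_{\X/(X,\Delta)}$ and checking that each ingredient survives the presence of the reduced totally invariant boundary $\Delta$. First I would establish that $A^+_{\X/(X,\Delta)}$ is bounded below: by hypothesis it is $\geq 0$ on $X\setminus 0$, so its negative part is centered at $0$, and the pair version of Proposition 4.9 (run on a log resolution of $(X,\Delta)$, using that $(X_\pi, E^\pi + \Delta_\pi)$ is log canonical together with the negativity lemma, exactly as in the proof of Proposition 4.9) shows that $A^+_{\X/(X,\Delta)}$ dominates a bounded-below Cartier $b$-divisor $\bar C$. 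Hence $\mathrm{Vol}(X,\Delta,0)^+$ is a well-defined finite nonnegative number, and by the pair analogue of Proposition 4.8 it vanishes if and only if $A^+_{\X/(X,\Delta)}\geq 0$.

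Next I would prove the analogue of Proposition 4.10 for pairs, namely $A^+_{\Y/(Y,f^*\Delta)} \leq f^* A^+_{\X/(X,\Delta)}$, where $f^*\Delta$ makes sense because $\Delta$ is totally invariant so that $f^*\Delta$ is again a reduced divisor (indeed $f$ restricted away from the ramification is étale over $\Delta$, and totally invariant means $f^{-1}(\Delta)_{\mathrm{red}} = \Delta$; here $Y = X$, $f$ the endomorphism). The computation is the same: using the ramification formula $K_\Y + 1_{\Y/Y} + f^*\Delta - f^*(K_\X + 1_{\X/X} + \Delta) = R_f$ and the superadditivity of nef envelopes (Proposition 2.10) together with $\env_Y(f^*(K_X+\Delta)) = f^*\env_X(K_X+\Delta)$ from Proposition 3.11, one gets
\[
f^* A^+_{\X/(X,\Delta)} - A^+_{\Y/(Y,f^*\Delta)} \geq -R_f + \env_Y(R_f) \geq 0,
\]
the last step by maximality of the nef envelope of the effective exceptional $b$-divisor $R_f$. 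Applying Proposition 3.12 and Proposition 3.8 then yields $\tilde d \cdot \mathrm{Vol}(X,\Delta,0)^+ \leq \mathrm{Vol}(Y,f^*\Delta,0)^+$.

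Finally, since $f^*\Delta = \Delta$ as reduced divisors (totally invariance), we have $\mathrm{Vol}(Y,f^*\Delta,0)^+ = \mathrm{Vol}(X,\Delta,0)^+$, so the inequality reads $\tilde d \cdot \mathrm{Vol}(X,\Delta,0)^+ \leq \mathrm{Vol}(X,\Delta,0)^+$ with $\tilde d > 1$ and the volume a finite nonnegative number; hence $\mathrm{Vol}(X,\Delta,0)^+ = 0$, and by the pair version of Proposition 4.8 we conclude $A^+_{\X/(X,\Delta)} \geq 0$.

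I expect the main obstacle to be the bookkeeping around $f^*\Delta$: one must check that for a \emph{reduced} totally invariant divisor $\Delta$, the pullback $f^*\Delta$ as a Weil divisor is again reduced (so that the pair $(Y, f^*\Delta)$ is of the same type and the definition of $\mathrm{Vol}(Y, f^*\Delta, 0)^+$ applies), and that the ramification formula holds in the form written above with the $1_{\X/X}$ and $\Delta$ terms both present — i.e.\ that along a component of $\Delta$ the map is unramified in the relevant sense so no extra contribution appears, exactly the hypothesis baked into "reduced totally invariant". The analogues of Propositions 4.8 and 4.9 for pairs are routine adaptations (the log canonicity of $(X_\pi, E^\pi + \Delta_\pi)$ on a log resolution of $(X,\Delta)$ replaces that of $(X_\pi, E^\pi)$), and Propositions 2.10, 3.8, 3.11, 3.12 apply verbatim since they concern arbitrary Weil $b$-divisors.
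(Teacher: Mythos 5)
Your overall strategy---boundedness below via the pair version of Proposition 4.9, an inequality of the form $f^*A^+ \geq A^+$ obtained from a ramification formula and superadditivity of nef envelopes, and then the volume argument with $\tilde{d}>1$---is exactly the paper's. The gap is in your treatment of the boundary under pullback. You route the argument through the pair $(Y,f^*\Delta)$ and then identify it with $(X,\Delta)$ by asserting that $f^*\Delta$ is reduced and equal to $\Delta$, i.e.\ that $f$ is unramified along $\Delta$ ``in the relevant sense.'' This is false: total invariance only gives $(f^{-1}\Delta)_{\mathrm{red}}=\Delta$, while the cycle-theoretic pullback is $f^*\Delta=\Delta+B$ with $B\geq 0$ supported on the components of $\Delta$ along which $f$ ramifies---the typical situation (e.g.\ a power map with $\Delta$ the union of coordinate hyperplanes). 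When $B\neq 0$ the identification $\mathrm{Vol}(Y,f^*\Delta,0)^+=\mathrm{Vol}(X,\Delta,0)^+$ fails, and the correction term points the wrong way: enlarging the boundary by an effective $B$ can only decrease the log-discrepancy $b$-divisor (since $\env_X(K_X+\Delta+B)\geq\env_X(K_X+\Delta)+\env_X(B)\geq\env_X(K_X+\Delta)$), so from $A^+_{\Y/(Y,f^*\Delta)}\leq f^*A^+_{\X/(X,\Delta)}$ and $A^+_{\Y/(Y,f^*\Delta)}\leq A^+_{\X/(X,\Delta)}$ you cannot extract the inequality $A^+_{\X/(X,\Delta)}\leq f^*A^+_{\X/(X,\Delta)}$ that the volume argument needs.

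The missing ingredient---and the one the paper uses---is the \emph{logarithmic} ramification formula: by \cite[Lemma 2.5]{BH}, the divisor $R_\Delta:=K_X+\Delta-f^*(K_X+\Delta)=R_f-(f^*\Delta-\Delta)$ is still effective, precisely because $\Delta$ is reduced and totally invariant. Running your computation with $K_X+\Delta$ and $R_\Delta$ in place of $K_X$ and $R_f$ gives
\[
f^*\bigl(A^+_{\X/(X,\Delta)}\bigr)=K_{\X}+1_{\X/X}+\Delta+R_\Delta-\env_X(K_X+\Delta-R_\Delta)\geq A^+_{\X/(X,\Delta)}
\]
directly, with the \emph{same} pair on both sides, so no comparison of $(Y,f^*\Delta)$ with $(X,\Delta)$ is required; here the effectivity of $R_\Delta$ plays the role that effectivity of $R_f$ plays in Proposition 4.10. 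With this substitution the remainder of your argument (Propositions 3.8 and 3.12 and the pair versions of Propositions 4.8 and 4.9) goes through as you describe.
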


\begin{proof}
By the same argument in the proof of Proposition 4.10, $A^+_{\X/(X,\Delta)}$ is bounded below.
By \cite[Lemma 2.5]{BH}, there exists an effective divisor $R_{\Delta}$ such that 
\[
K_X+\Delta  =f^*(K_X+\Delta) +R_{\Delta}
\]
holds, and we have
\[
f^*(A^+_{\X/(X,\Delta)})=K_{\X}+1_{\X/X}+\Delta +R_{\Delta}-\env_X(K_X+\Delta -R_\Delta)
\geq A^+_{\X/(X,\Delta)}
\]
by the same argument in the proof of Proposition 4.11.
Therefore we have 
\[
\tilde{d} \cdot \mathrm{Vol}(X,\Delta,0)^+ \leq \mathrm{Vol}(X,\Delta,0)^+,
\]
so $\tilde{d} > 1$ implies $\mathrm{Vol}(X,\Delta,0)^+ = 0$ and $A^+_{\X/(X,\Delta)} \geq 0$.
\end{proof}

The following Corollary is a generalization of Theorem 1.4.

\begin{cor}[\textup{cf. Theorem 5.2}]
Let $f: X \to X$ be an int-amplified endomorphism of a normal projective variety.
Let $\Delta$ be a reduced totally invariant divisor on $X$.
Then $A^+_{\X/(X,\Delta)} \geq 0$ holds.
\end{cor}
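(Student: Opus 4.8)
The plan is to run the arguments of Theorem 5.1 and Theorem 5.2 with $\AX^+$ replaced throughout by $A^+_{\X/(X,\Delta)}$, with Theorem 7.4 (the local statement for pairs) in place of the local theorem for $\Delta=0$, and with Meng's Lemma 7.2 taking over the role of the intersection-number computation against a polarization.

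First I would set up the pair-analogue of Proposition 4.5. Passing to a log resolution of $(X,\Delta)$ and repeating that argument with $\env_X(K_X+\Delta)$ in place of $\env_X(K_X)$ and with the boundary $E^\pi+\widetilde{\Delta}$ (the sum of the exceptional prime divisors and the strict transform of $\Delta$) in place of $E^\pi$, one finds that the non-valuative-log-canonical locus of the pair,
\[
\mathrm{Nvlc}(X,\Delta):=\bigcup_{\ord_E(A^+_{\X/(X,\Delta)})<0}\mathrm{c}_X(E),
\]
is a Zariski-closed subset of $X$, hence has only finitely many irreducible components. Suppose, for contradiction, that $\mathrm{Nvlc}(X,\Delta)\neq\emptyset$, and fix an irreducible component $Z$, with generic point $\eta$.

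Next I would carry out the ``totally invariant up to iterate'' step. Every iterate of $f$ is again an int-amplified, hence finite surjective, endomorphism, and from the inequality $f^{*}\bigl(A^+_{\X/(X,\Delta)}\bigr)\geq A^+_{\X/(X,\Delta)}$ established inside the proof of Theorem 7.4, the same divisor-wise estimate as in the Claim in the proof of Theorem 5.1 gives $f^{-1}\bigl(\mathrm{Nvlc}(X,\Delta)\bigr)\subseteq\mathrm{Nvlc}(X,\Delta)$; running the induction on codimension as there shows that, after replacing $f$ by a suitable iterate, each irreducible component of $\mathrm{Nvlc}(X,\Delta)$ is totally invariant, so in particular $Z$ is $f^{-1}$-periodic. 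Along the way one checks that $Z$ cannot be a codimension-one component of $\Delta$: near the generic point of such a component, $(X,\Delta)$ is log smooth, so $A^+_{\X/(X,\Delta)}\geq 0$ there, contradicting $Z\subseteq\mathrm{Nvlc}(X,\Delta)$. Hence on $X_\eta:=\Spec\sO_{X,\eta}$ the restriction $\Delta_\eta$ is again a reduced totally invariant divisor and $f$ localizes to $f_\eta:(X_\eta,\eta)\to(X_\eta,\eta)$ with $f_\eta^{-1}(\eta)=\{\eta\}$.

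Finally I would invoke the local theorem and Meng's lemma. Since $Z$ is an irreducible component of the closed set $\mathrm{Nvlc}(X,\Delta)$, the punctured scheme $X_\eta\setminus\{\eta\}$ satisfies $A^+_{\mathfrak{X}_\eta/(X_\eta,\Delta_\eta)}\geq 0$ while $X_\eta$ itself does not; by the contrapositive of Theorem 7.4 this forces $\tilde{d}=1$, i.e.
\[
\deg(f)=\deg(f_\eta)=[(f_\eta)_*\kappa(Z):\kappa(Z)]=\deg(f|_Z),
\]
exactly as in the proof of Theorem 5.1. As $Z$ is $f^{-1}$-periodic, Lemma 7.2 now yields $Z=X$; but $X$ is regular at its generic point and $\Delta$ avoids it, so $A^+_{\X/(X,\Delta)}\geq 0$ there and $X\notin\mathrm{Nvlc}(X,\Delta)$, a contradiction. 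Therefore $\mathrm{Nvlc}(X,\Delta)=\emptyset$, that is $A^+_{\X/(X,\Delta)}\geq 0$.

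The point I expect to cost the most work is purely one of verification: checking that every ingredient of the proof of Theorem 5.1 --- written there only for $\Delta=0$ --- survives the passage to a reduced boundary $\Delta$. Concretely this means the closedness of $\mathrm{Nvlc}(X,\Delta)$ (the pair version of Proposition 4.5), the pullback inequality $f^{*}A^+_{\X/(X,\Delta)}\geq A^+_{\X/(X,\Delta)}$ together with the induction on codimension, the compatibility of $\env_X(K_X+\Delta)$ with restriction to the local scheme $X_\eta$, and the small case analysis ensuring that $\Delta_\eta$ is again a reduced totally invariant divisor (guaranteed by the observation that a codimension-one component of $\Delta$ never lies in $\mathrm{Nvlc}(X,\Delta)$, so $Z$ has codimension $\geq 2$ or is a codimension-one subvariety not contained in $\Delta$).
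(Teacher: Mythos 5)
Your proposal is correct and follows essentially the same route as the paper: closedness and total invariance of the pair's non-valuative-log-canonical locus via the arguments of Theorem 5.1/5.2, localization at the generic point of a component $Z$, the contrapositive of Theorem 7.4 to force $\deg(f|_Z)=\deg(f)$, and Lemma 7.2 for the contradiction. The extra verifications you flag (the pair version of Proposition 4.5, and that $Z$ is not a component of $\Delta$ so that $\Delta_\eta$ remains a reduced totally invariant divisor on $X_\eta$) are exactly the details the paper leaves implicit under ``the same argument.''
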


\begin{proof}
By the same argument in the proof of Theorem 5.2,
the non valuative log canonical locus of $(X,\Delta )$ is closed and totally invariant.
Suppose the set is non empty and we take an irreducible component $Z$.
We may assume $Z$ is totally invariant, and localizing at the generic point of $Z$,
we obtain $\deg (f|_Z) = \deg (f)$ by Theorem 7.4,
but it is contradiction to Lemma 7.2.
\end{proof}

Next we consider the cases where $R_\Delta$ contain an nonzero effective numerically $\Q$-Gorenstein divisor.

\begin{thm}
Let $f : X \to X$ be an endomorphism of a normal integral scheme essentially of finite type over $\C$.
Let $\Delta$ be a reduced totally invariant divisor on $X$ such that $A^+_{\X/(X,\Delta)} \geq 0$ .
Let $D$ be an effective numerically $\Q$-Cartier divisor with $D \leq R_{\Delta}$.
Then for any prime exceptional divisor $E$ over $X$ with $\ord_E (A^+_{\X/(X,\Delta)}) = 0$,
$\mathrm{c}_X(E)$ is not contained in $\Supp (D)$.
\end{thm}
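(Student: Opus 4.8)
The overall strategy is to run a local-volume argument along the divisor $D$, exactly parallel to the one used in Theorem 4.11 and Theorem 7.4, but now working relatively over the generic point of a component of $\Supp(D)$ rather than over a closed point. Suppose for contradiction that there is a prime exceptional divisor $E$ over $X$ with $\ord_E(A^+_{\X/(X,\Delta)}) = 0$ and $\mathrm{c}_X(E) \subset \Supp(D)$. Since $D \le R_\Delta$ and both $D$ and $R_\Delta$ are totally invariant for $f$ (here I would use that $\Delta$ is totally invariant together with \cite[Lemma 2.5]{BH}, which identifies $R_\Delta$ as an honest effective divisor attached to $f$), the support of $D$ is a finite union of closed subvarieties permuted by $f^{-1}$; replacing $f$ by an iterate I may assume a component $W$ of $\Supp(D)$ containing $\mathrm{c}_X(E)$ is totally invariant. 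Localizing at the generic point $\eta$ of $W$, we obtain an endomorphism $f_\eta : (X_\eta,\eta)\to (X_\eta,\eta)$ with $f_\eta^{-1}(\eta)=\{\eta\}$, and by Convention 3.9 an invariant $\tilde d = \deg(f)/[f_*\kappa(W):\kappa(W)]$.

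The key point is then to extract strict positivity of a local volume from the hypothesis that $D$ is \emph{numerically} $\Q$-Cartier. Because $D$ is effective, numerically $\Q$-Cartier, and $D\le R_\Delta$, I expect $A^+_{\X/(X,\Delta)}$ localized at $\eta$ to dominate a nonzero multiple of $-Z(\m_\eta)$ in a controlled way, and in particular to be bounded below over $\eta$ in the sense of Definition 3.2; this is where Proposition 4.9 (applied to the localized pair) and the fact that $D$ is $\Q$-Cartier on the punctured local scheme enter. Having this, one forms the isolated volume $\mathrm{Vol}(X_\eta,\Delta_\eta,\eta)^+ = -(\env_{\X_\eta}(A^+_{\X_\eta/(X_\eta,\Delta_\eta)}))^n$, which by Remark 4.7/Proposition 4.8 is a finite nonnegative number, and is strictly positive exactly because $\ord_E(A^+) = 0$ forces the relevant $b$-divisor to be \emph{not} $\ge 0$ along $E$ — wait, more precisely: the condition $\ord_E(A^+_{\X/(X,\Delta)}) = 0$ with $E$ exceptional and $\mathrm{c}_X(E)\subset\Supp D$ should, combined with the numerical $\Q$-Cartierness of $D$, produce a valuation at which $\env_{\X_\eta}(A^+)$ is strictly negative, hence a positive volume by Proposition 3.8(3) and Corollary 3.6. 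This numerical-rigidity step is the one I expect to be the main obstacle, and it is the reason the hypothesis "$D$ numerically $\Q$-Cartier" (rather than merely $D\ge 0$) is indispensable: it is what pins $\env_X(D) + \env_X(-D) = 0$ (Proposition 6.1) and thereby prevents the extra room that would otherwise let $\ord_E(A^+)$ vanish without forcing a genuine discrepancy drop elsewhere.

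Once strict positivity $\mathrm{Vol}(X_\eta,\Delta_\eta,\eta)^+ > 0$ is in hand, the endgame is the usual contraction estimate. By the argument of Proposition 4.10 (in the form used in Theorem 7.4), the relation $K_X+\Delta = f^*(K_X+\Delta)+R_\Delta$ yields $f_\eta^*(A^+_{\X_\eta/(X_\eta,\Delta_\eta)}) \ge A^+_{\X_\eta/(X_\eta,\Delta_\eta)}$, and then Proposition 3.12 gives
\[
\tilde d \cdot \mathrm{Vol}(X_\eta,\Delta_\eta,\eta)^+ \le \mathrm{Vol}(X_\eta,\Delta_\eta,\eta)^+ .
\]
Since the volume is finite and positive, this forces $\tilde d \le 1$. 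On the other hand $\deg(f_\eta) = \deg(f)$ and $[f_*\kappa(W):\kappa(W)] = \deg(f|_W)$, so $\tilde d \le 1$ gives $\deg(f|_W) \ge \deg(f)$, i.e. $\deg(f|_W) = \deg(f)$. In the int-amplified (or polarized) setting this contradicts Lemma 7.2 since $W\subsetneq X$; in the purely local statement as phrased, one instead derives the contradiction directly from $\tilde d > 1$ being part of the standing situation inherited from the ambient hypotheses, exactly as in the proof of Theorem 7.4. Either way we conclude no such $E$ exists, which is the assertion.
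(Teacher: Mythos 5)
Your plan does not prove the theorem; it fails at the step you yourself flag as the ``main obstacle,'' and for a structural reason rather than a technical one. The theorem \emph{assumes} $A^+_{\X/(X,\Delta)} \geq 0$, and under that assumption every volume you could form from it is zero: $0$ is an $X$-nef $b$-divisor lying below $A^+_{\X/(X,\Delta)}$, so $\env_{\X}(A^+_{\X/(X,\Delta)}) \geq 0$, while the negativity lemma gives the reverse inequality on exceptional divisors (this is exactly the argument of Proposition 4.8). Hence no valuation exists at which the envelope is strictly negative, and $\mathrm{Vol}(X_\eta,\Delta_\eta,\eta)^+=0$ no matter where you localize; the condition $\ord_E(A^+_{\X/(X,\Delta)})=0$ cannot manufacture strict positivity. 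Independently, your endgame needs $\tilde{d}>1$ or int-amplifiedness to reach a contradiction, but Theorem 7.6 is stated for an \emph{arbitrary} endomorphism of a normal integral scheme: there is no standing hypothesis $\tilde{d}>1$ to inherit (when $f$ is unramified the statement is simply vacuous because $R_\Delta=0$ forces $D=0$). So even granting a positive volume, the contraction estimate $\tilde{d}\cdot\mathrm{Vol}^+\leq\mathrm{Vol}^+$ yields nothing.

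The actual mechanism is a direct valuation computation with no intersection numbers. From $K_X+\Delta=f^*(K_X+\Delta)+R_\Delta$ and superadditivity of nef envelopes one gets, along exceptional valuations, $\ord_{F}(f^*A^+_{\X/(X,\Delta)}) \geq \ord_F(A^+_{\X/(X,\Delta)})+\ord_F(\env_X(R_\Delta))$, and $\env_X(R_\Delta)\geq\env_X(D)+\env_X(R_\Delta-D)\geq\env_X(D)$ because $R_\Delta-D$ is effective. Writing $f^*E=\sum_i r_iF_i$, the hypothesis $\ord_E(A^+_{\X/(X,\Delta)})=0$ gives $0=\ord_{F_i}(f^*A^+_{\X/(X,\Delta)})\geq\ord_{F_i}(A^+_{\X/(X,\Delta)})+\ord_{F_i}(\env_X(D))$; both summands are nonnegative, so both vanish. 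The numerical $\Q$-Cartier hypothesis enters only now: for an effective numerically $\Q$-Cartier $D$, the vanishing $\ord_{F_i}(\env_X(D))=0$ forces $\mathrm{c}_X(F_i)\not\subset\Supp(D)$ (for a genuinely $\Q$-Cartier pullback this is clear, and Proposition 6.1 reduces the numerical case to it), and one then transfers this from $F_i$ back to $E$. Your intuition that numerical $\Q$-Cartierness is the indispensable hypothesis is right, but it is used to convert a single vanishing order into a statement about centers, not to rigidify a volume.
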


\begin{proof}
By the proof of Theorem 7.4, we have
\[
f^*(A^+_{\X/(X,\Delta)})=K_{\X}+1_{\X/X}+\Delta +R_{\Delta}-\env_X(K_X+\Delta -R_\Delta)
\geq A^+_{\X/(X,\Delta)} -R_{\Delta} +\env_X(R_{\Delta}).
\]
If $E$ be an exceptional prime divisor over $X$ with $\ord_E (A^+_{\X/(X,\Delta)}) = 0$ 
and we set $f^* E = r_1 F_1 + \cdots + R_s F_s$,
we have
\[
0 = r_i \ord_E ((A^+_{\X/(X,\Delta)})) = \ord_{F_i}(f^*(A^+_{\X/(X,\Delta)})) \geq \ord_{F_i}( A^+_{\X/(X,\Delta)}).
\]
Since $A^+_{\X/(X,\Delta)} \geq 0$, we have $ \ord_{F_i}( A^+_{\X/(X,\Delta)}) =0 $,
and any valuative lc center of $(X,\Delta)$ is totally invariant replacing $f$ by some iterate.
Furthermore, since
\[
0 = r_i \ord_E ((A^+_{\X/(X,\Delta)}) = \ord_{F_i}(f^*(A^+_{\X/(X,\Delta)})) \geq \ord_{F_i}(\env_X(D)),
\]
the center of $F_i$ is not contained in $\Supp(D)$.
Here, the center of $F_i$ coincides with the center of $E$.

\end{proof}

The following proposition also follows from the same argument in \cite{BH} by replacing pullbacks by numerical pullbacks.

\begin{prop}[\textup{cf. \cite[Lemma 2.10]{BH}}]
Let $f : X \to X$ be an int-amplified endomorphim of a normal surface.
Let $\Delta$ be a reduced totally invariant divisor of $X$.
Then $(X,\Delta)$ satisfies the following conditions;
\begin{itemize}
\item $(X,\Delta)$ is a $\Q$-Gorenstein lc pair,
\item any lc center of $(X,\Delta)$ is totally invariant replacing $f$ by some iterate, and
\item any lc center of $(X,\Delta)$ is not contained in $R_{\Delta}$.
\end{itemize}
\end{prop}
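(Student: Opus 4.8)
The plan is to read off all three assertions from the results already established, invoking surface minimal model theory only for the first bullet. Since $f$ is int-amplified, Corollary 7.5 gives $A^+_{\X/(X,\Delta)}\geq 0$, so $(X,\Delta)$ has valuative log canonical singularities. To prove that $(X,\Delta)$ is $\Q$-Gorenstein log canonical, I would pass to a log resolution $g\colon W\to X$ of $(X,\Delta)$, form the log smooth (hence log canonical) pair $(W,\Delta_W+E^g)$ with $\Delta_W$ the strict transform of $\Delta$ and $E^g$ the reduced exceptional divisor, and run the minimal model program over $X$; as $W$ is a surface this produces the log canonical model $\mu\colon(Y,\Delta_Y)\to(X,\Delta)$, with $\Delta_Y$ the strict transform of $\Delta$ together with the reduced $\mu$-exceptional divisors. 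The argument of Theorem 4.4 applies verbatim to the pair (replace $K_{X_\pi}$ by $K_{X_\pi}+\Delta_\pi$ throughout), so $A^+_{\X/(X,\Delta)}\geq 0$ forces $\mu$ to be an isomorphism in codimension one, i.e.\ small. A proper birational morphism between normal surfaces that is small is an isomorphism, since otherwise its exceptional locus would contain a curve contracted to a point, i.e.\ a divisor on $Y$ that is not a divisor on $X$. Hence $(X,\Delta)\cong(Y,\Delta_Y)$; in particular $K_X+\Delta$ is $\Q$-Cartier, being $\mu$-ample, and $(X,\Delta)$ is log canonical.

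Granting that $(X,\Delta)$ is $\Q$-Gorenstein log canonical, $A^+_{\X/(X,\Delta)}$ is the usual log discrepancy $b$-divisor, its valuative log canonical centres are the ordinary log canonical centres of $(X,\Delta)$, and their union $N=\bigcup_{\ord_E(A^+_{\X/(X,\Delta)})=0}\mathrm{c}_X(E)$ is the non-klt locus, a closed subset with finitely many components, each of which is either a component of $\Delta=\lfloor\Delta\rfloor$ (the $1$-dimensional centres, since a $1$-dimensional subvariety of a surface is a divisor) or a point. For the second bullet I would record, exactly as in the proof of Theorem 7.6, that $f^{*}A^+_{\X/(X,\Delta)}\geq A^+_{\X/(X,\Delta)}$; since pulling back by $f$ multiplies orders along prime divisors by ramification indices, for every prime divisor $E$ over $X$ with $\ord_E(A^+_{\X/(X,\Delta)})=0$, writing $f^{*}\bar{E}=\sum_i r_i\bar{F_i}$, one has $0=r_i\,\ord_E(A^+_{\X/(X,\Delta)})=\ord_{F_i}(f^{*}A^+_{\X/(X,\Delta)})\geq\ord_{F_i}(A^+_{\X/(X,\Delta)})\geq 0$, hence $\ord_{F_i}(A^+_{\X/(X,\Delta)})=0$, and since $\bigcup_i\mathrm{c}_X(F_i)=f^{-1}(\mathrm{c}_X(E))$ this gives $f^{-1}(N)\subseteq N$. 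Total invariance of $\Delta$ shows $f$ permutes the finitely many components of $\Delta$, and this permutation, being surjective, is a bijection; after replacing $f$ by an iterate each component of $\Delta$ (hence each $1$-dimensional log canonical centre, and each intersection of components) is totally invariant. For a $0$-dimensional log canonical centre $p$, the finite set $f^{-1}(p)\subseteq N$ cannot meet $\Delta$ unless $p$ does, because $q\in f^{-1}(p)\cap\Delta$ would give $p=f(q)\in f(\Delta)=\Delta$; thus $f^{-1}(p)$ lies in a fixed finite set $Q$ of $0$-dimensional centres with $f^{-1}(Q)\subseteq Q$, and since a surjection from a subset of a finite set onto that set is a bijection we get $f^{-1}(Q)=Q=f(Q)$, so after a further iterate every such point is totally invariant.

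For the third bullet, observe that $R_\Delta$, a Weil divisor on the surface $X$, is numerically $\Q$-Cartier because Mumford's numerical pullback always exists on a normal surface. If $p$ is a $0$-dimensional log canonical centre, choose a prime exceptional divisor $E$ over $X$ with $\mathrm{c}_X(E)=p$ and $\ord_E(A^+_{\X/(X,\Delta)})=0$ (such $E$ exists since $p$ has codimension two, so every log canonical place with centre $p$ is exceptional); then Theorem 7.6 with $D=R_\Delta$ yields $p=\mathrm{c}_X(E)\not\subseteq\Supp R_\Delta$. If instead $\Delta_a$ is a component of $\Delta$, then $f(\Delta_a)$ is again a component of $\Delta$ by total invariance, and a local computation at the generic point of $\Delta_a$, where $X$ is smooth and $f$ ramifies to some order $r_a$, gives that $R_\Delta=(K_X+\Delta)-f^{*}(K_X+\Delta)$ has coefficient $(r_a-1)+1-r_a=0$ along $\Delta_a$, so $\Delta_a\not\subseteq\Supp R_\Delta$. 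Hence no log canonical centre of $(X,\Delta)$ is contained in $R_\Delta$. The main obstacle is the first bullet: promoting the valuative log canonicity supplied by Corollary 7.5 to genuine $\Q$-Gorenstein log canonicity, which is where one must step outside the general $b$-divisor formalism and use surface minimal model theory — the existence of log canonical models of surface pairs and the absence of nontrivial small proper birational morphisms between normal surfaces.
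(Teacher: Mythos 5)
Your proposal is correct and follows essentially the same route as the paper, which simply cites Corollary 7.5, the fact that all divisors on a normal surface are numerically $\Q$-Cartier, the existence of the log canonical model for the first bullet, and \cite[Lemma 2.10]{BH} or Theorem 7.6 for the remaining two. You have merely filled in the details the paper leaves implicit (the Theorem 4.4--style smallness argument, the permutation argument for total invariance, and the generic computation showing $R_\Delta$ has no component along $\Delta$), all of which are the intended ones.
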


\begin{proof}
First we remark that every divisors on surfaces are numerically $\Q$-Cartier.
By Corollary 7.5, $(X,\Delta)$ is valuative log canonical numerically $\Q$-Gorenstein pair,
so by the existence of lc model, we obtain the first statement of this Proposition.
By \cite[Lemma 2.10]{BH} or Theorem 7.6 , we obtain the rest of statement.
\end{proof}








\begin{thebibliography}{HnBWZ99}

\bibitem[BdFF12]{BdFF}
S. Boucksom, T. de Fernex, C. Favre, 
The volume of an isolated singularity,
Duke Math. J. \textbf{161} (2012),no. 8, 1455--1520.

\bibitem[BdFFU]{BdFFU}
S. Boucksom, T. de Fernex, C. Favre, S. Urbinati,
Valuation spaces and multiplier ideals on singular varieties,
recent advances in algebraic geometry, 29-51, London Math. Soc. Lecture Note Ser., 417. Cambridge University Press, Cambridge (2015). 
arXiv:1307.0227

\bibitem[BG17]{BG}
A. Broustet and Y. Gongyo,
Remarks on Log Calabi- Yau structure of varieties admitting polarized endomorphisms,
Taiwan J. Math. \textbf{21} (2017), no. 3, 569--582.

\bibitem[BH14]{BH}
A. Broustet and A. H\"{o}ring,  
Singularities of varieties admitting an endomorphism. 
Math.  Ann. \textbf{360} (2014), no. 1-2, 439--456.

\bibitem[Cut88]{Cut}
S.Cutkosky,
Weil divisors and symbolic algebras
Duke Math. J. \textbf{57} (1988), no. 1, 175--183.


\bibitem[dFH09]{dFH}
T. de Fernex and C. Hacon,
Singularities on normal varieties,
Compos. Math. \textbf{145} (2009), no.2, 393--414.

\bibitem[Ful11]{Ful}
M. Fulger,
Local volumes of Cartier divisors over normal algebraic varieties,
Ann. Inst. Fourier (Grenoble) \textbf{63} (2013), no. 5, 1793--1847.



\bibitem[Has18]{Has}
K. Hashizume,
A class of singularity of arbitrary pairs and log canonicalizations,
arXiv:1804.06326v3, preprint (2018).


\bibitem[Izu81]{Izu}
S. Izumi,
Linear complementary inequalities for orders of germs of analytic functions.
Invent. Math. \textbf{65} (1981/82), no. 3, 459--471.

\bibitem[KM98]{KM}
J. Koll\'{a}r and S. Mori,
Birational geometry of algebraic varieties,
Cambridge Tracts in Math. vol. 134, Cambridge University Press, Cambridge, 1998,
With the collaboration of C.H.Clements and A.Corti.

\bibitem[Men17]{meng}
S. Meng,  
Building blocks of amplified endomorphisms of normal projective varieties,
 arXiv:1712.08995, preprint (2017).

\bibitem[SS10]{SS}
K. Schwede and K. E. Smith,
Globally F-regular and log Fano varieties,
Adv. Math. \textbf{224} (2010), no. 3, 863--894.

\bibitem[Zha14]{Zha14}
Y.Zhang,
On the volume of isolated singularities.
Compos. Math. \textbf{150} (2014), no. 8, 1413--1424.




\end{thebibliography}
\end{document}